\def\mscname{{\bfseries Subject Classifications\runinend}}
\def\msc#1{\par\addvspace\baselineskip\noindent\mscname\enspace\ignorespaces#1}
\def\a{\mathfrak{a}}
\def\al{\alpha}
\def\be{\beta}
\def\bh{{\bar h\,}}
\def\bi{{\bar\imath\,}}
\def\bj{{\bar\jmath\,}}
\def\bk{{\bar k\,}}
\def\bl{{\bar l\,}}
\def\CC{{\mathbb{C}}}
\def\CN{\CC^{\ts N|N}}
\def\de{\delta}
\def\De{\Delta}
\def\End{\operatorname{End}\hskip1pt}
\def\ep{\varepsilon}
\def\g{\mathfrak{g}}
\def\ge{\geqslant}
\def\glMN{\mathfrak{gl}_{\ts M|N}}
\def\glN{\mathfrak{gl}_{\ts N|N}}
\def\GP{G^{\,\prime}}
\def\gr{\operatorname{gr}}
\def\grp{\operatorname{gr}^{\,\prime}}
\def\h{\mathfrak{h}}
\def\IC{\mathcal{I}}
\def\id{{\mathrm{id}}}
\def\im{{\mathrm{im}}}
\def\JC{\mathcal{J}}
\def\lc{{\ts,\hskip1pt\ldots\ts,\ts}}
\def\le{\leqslant}
\def\ns{{\hskip-.5pt}}
\def\om{\omega}
\def\ot{\otimes}
\def\pN{\mathfrak{p}_{N}}
\def\S{\operatorname{S}}
\def\SC{\mathcal{S}}
\def\Sg{\mathfrak{S}}
\def\si{\sigma}
\def\TA{T^{\,\ast\ns}}
\def\TC{\mathcal{T}}
\def\TP{T^{\,\prime}}
\def\ts{{\hskip.5pt}}
\def\Ug{\operatorname{U}(\ts\g\ns)}
\def\Uh{\operatorname{U}(\ts\h\ns)}
\def\UpN{\operatorname{U}(\ts\pN\ns)}
\def\XP{X^{\,\prime}}
\def\XpN{\operatorname{X}(\ts\pN\ns)}
\def\YN{\operatorname{Y}(\ts\mathfrak{gl}_{\ts N}\ns)}
\def\YpN{\operatorname{Y}(\ts\pN\ns)}
\def\ZpN{\operatorname{Z}(\ts\pN\ns)}
\def\ZZ{\mathbb{Z}}
\begin{document}

\title{Yangian of the periplectic Lie superalgebra}
\titlerunning{Yangian of the periplectic Lie superalgebra}

\author{Maxim Nazarov}
\authorrunning{Maxim Nazarov}

\institute{Department of Mathematics, University of York, 
York YO10 5DD, United Kingdom}

\date{}

\maketitle


\thispagestyle{empty} 

\begin{abstract}
We study in detail the Yangian of the periplectic Lie superalgebra.
For this Yangian  we verify an analogue 
of the Poincar\'e\ts-Birkhoff\ts-\ns Witt Theorem. 
Moreover
we introduce a family of free generators of the centre of this Yangian.

\end{abstract}

\keywords{Hopf algebra, Lie superalgebra, Yangian}

\msc{16T20, 17A70, 17B37}

\newpage


\refstepcounter{section}
\subsection*{\bf\thesection}
\label{1}

Let $\glMN$ be the general linear Lie superalgebra over the complex field
$\CC\,$. Here both $M$ and $N$ are positive integers.
The Yangian of $\glMN$ was introduced in \cite{N1}
by extending the definition of the Yangian $\YN$
of the general linear 
Lie algebra $\mathfrak{gl}_{\ts N}\,$, 
see for instance \cite{N4}. 
The Yangian of $\glMN$ is a deformation of
the universal enveloping algebra 
of the polynomial current 
Lie superalgebra $\glMN[u]$ in the class of Hopf algebras,
see \cite{N5} for further details.

\vspace{1pt}

Let $\pN$ be the periplectic Lie superalgebra over 
$\CC\,$.
This is a subalgebra of 
$\glN$ preserving
a non-degenerate odd symmetric bilinear form on the
$\ZZ_2$-graded vector space
$\CN$. The Lie superalgebra $\glN$ acts on $\CN$ in the natural~way.
Equivalently, the Lie superalgebra $\pN$ can be defined
as a fixed point subalgebra of $\glN$ with respect to a certain involutive
automorphism. This automorphism is denoted by $\om\,$,
see Section \ref{3} for an explicit definition of $\pN$ in these terms. 

The corresponding Yangian $\YpN$ was briefly introduced in \cite{N2}. 
Note that the construction of $\YpN$ therein involved 
the {\it periplectic Brauer algebra\/} which was later studied in by other authors, see for instance \cite{CP,C2,KT,M}.

According to the general scheme of \cite{D},
the Yangian of the Lie superalgebra $\pN$ cannot be defined
as a deformation of the universal enveloping algebra 
of the polynomial current 
Lie superalgebra $\pN[u]$ in the class of Hopf algebras.
This is because by \cite{S} the only 
supersymmetric $\pN$-invariant element of $\pN^{\ts\ot2}$ is zero. 
Consequently, there is no natural Lie co-superalgebra structure on 
$\pN[u]\,$.

Remarkably, there is
a natural Lie co-superalgebra structure on 
the {\it twisted\/} polynomial current Lie superalgebra
$
\g=\{\,g\ts(u)\in\glN[u]:\om\ts(\ts g\ts(u))=g\ts(-\ts u)\ts\}\,.
$
The Yangian $\YpN$ is a deformation of the universal enveloping algebra
$\Ug$ in the class of Hopf algebras.
See \cite{N2} for further discussion of this fact.

The definition of the Yangian $\YpN$ is given in our Section \ref{10}.
It is based on a new solution of the quantum Yang-Baxter equation
constructed in \cite{N2}. This solution is a rational function of two
variables $u\,,\ns v$ with values in the periplectic Brauer algebra. 
Unlike other rational solutions discovered before \cite{N2}
this is {\it not\/} a function of only the difference $u-v$
of the variables, see our Section~\ref{5}.

Our Theorem \!\ref{T1} is stated in \cite{N2} without proof. 
We prove it here. We also prove
an analogue for $\YpN$ of the 
Poincar\'e\ts-Birkhoff\ts-\ns Witt 
Theorem for~$\Ug\,$,~see \cite{MM}.
This analogue is our Theorem \ref{T2} which was also stated
in \cite{N2} without proof. We sketch its proof here.
Its Corollary \ref{C1}
describes $\YpN$ as a vector space explicitly.

Unlike its analogues \cite{AMR} 
for the Yangians of the symplectic and orthogonal Lie algebras,
Theorem 2 cannot be proved by using the method of \cite{N3}.
The reason for it is explained in 
our Section \ref{11}. We use the Diamond Lemma of~\cite{B}~instead. 

The Lie superalgebra $\pN$ is a subalgebra of $\g\,$.
It consists of all those elements of $\g$ which do not depend on 
the variable $u\,$. So the universal enveloping algebra $\UpN$
is a Hopf subalgebra of $\Ug\,$.
We also have an embedding of $\operatorname{U}(\ts\pN\ts)$ 
to~the Yangian $\YpN$ as a Hopf subalgebra, see Corollary \ref{C2}
to our Theorem \ref{T2}.

There also is a subalgebra of $\g$ isomorphic to 
$\mathfrak{gl}_{\ts N}[u]\,$.
It consists of all 
$g(u)\in\g$ taking values only in the even part of $\glN\,$.
Hence there is a Hopf subalgebra of $\operatorname{U}(\ts\g\ts)$
isomorphic to 
$\operatorname{U}(\ts\mathfrak{gl}_{\ts N}[u]\ts)\,$.
We have an embedding of $\YN$ to $\YpN$ 
as an associative subalgebra, see Corollary~\ref{C3}.
But it is not a Hopf algebra embedding.

Our Theorem \ref{T3} yields an explicit 
description of the centre of $\YpN\,$. We use an analogue for
$\YpN$ of the {\it quantum Berezinian\/} \cite{N1}
for the Yangian of $\glMN\,$. To define it we
construct a family of one-dimensional subquotient
representations of the algebra $\YpN$ depending on a complex parameter,
see Sections \ref{15} and~\ref{25}.

\enlargethispage{36pt}

Irreducible finite-dimensional representations of the Lie superalgebra 
$\pN$ were recently studied by several authors, 
see for instance \cite{AGG1,AGG2,BB,C1,DLZ,ES,IRS}. 
It would be 
interesting to study irreducible finite-dimensional
representations of 
$\YpN\ts$. For the Yangian of 
$\glMN$ this was done in \cite{Z} and more recently in \cite{L,LM,T}.


\refstepcounter{section}
\subsection*{\bf\thesection}
\label{2}

We shall use the following general conventions.
Let $\mathrm{A}$~and~$\mathrm{B}$ be 
associative
$\ZZ_{\ts2}$-graded algebras. Their tensor product
$\mathrm{A}\ot\mathrm{B}$ is also
an associative $\ZZ_2$-graded algebra such that for any homogeneous
elements
$X,X^\prime\in\mathrm{A}$ and $Y,Y^\prime\in\mathrm{B}$
\begin{align*}
(X\ot Y)\ts (X^\prime\ot Y^\prime)&=X\ts X^\prime\ot Y\ts Y^\prime
\,(-1)^{\ts\deg X^\prime\deg Y},
\\
\deg\ts(X\ot Y)&=\deg X+\deg Y\ts.
\end{align*}
Furthermore, for any two $\ZZ_2$-graded modules
$U$ and $V$ over $\mathrm{A}$ and $\mathrm{B}$ respectively,
the vector space $U\ot V$ is a $\ZZ_2$-graded module over 
$\mathrm{A}\ot\mathrm{B}$ such that for any homogeneous elements
$x\in U$ and $y\in V$
\begin{align}
\label{XY}
(X\ot Y)\ts (\ts x\ot y\ts )&=X\ts x\ot Y\ts y
\,(-1)^{\ts\deg x\,\deg Y},
\\
\label{xy}
\deg\hspace{0.5pt}(\ts x\ot y\ts)&=\deg x+\deg y\,.
\end{align}
A homomorphism $\al:\mathrm{A}\to\mathrm{B}$ is a linear map
such that $\al\ts(X\,X^\prime)=\al\ts(X)\,\al\ts(X^\prime)$ for all 
$X,X^\prime\in\mathrm{A}\,$. But an antihomomorphism
$\be:\mathrm{A}\to\mathrm{B}$ is a linear map
such that for all homogeneous $X,X^\prime\in\mathrm{A}$ 
\begin{equation}
\label{bb}
\be\ts(X\ts X^\prime\ts)=
\be\ts(X^\prime)\,\be\ts(X)\,(-1)^{\ts\deg X\deg X^\prime}.
\end{equation}

Let $n$ be any positive integer.
If the algebra $\mathrm{A}$ is unital, let $\iota_{\ts p}$ be 
its embedding into the tensor product~$\mathrm{A}^{\ns\ot\ts n}$
as the $p\ts$-th tensor factor\ts:
\begin{equation*}
\iota_{\ts p}\ts(X)=1^{\ot\ts (p-1)}\ot X\ot1^{\ot\ts(n-p)}
\quad\text{for}\quad
p=1\lc n\,.
\end{equation*}
We will also use various embeddings of 
$\mathrm{A}^{\ns\ot\ts m}$ into $\mathrm{A}^{\ns\ot\ts n}$
for $m=1\lc n$.
For any choice of $m$ pairwise distinct indices 
$p_1\lc p_m\in\{\ts1\lc n\ts\}$
and of an element $X\in\mathrm{A}^{\ns\ot\ts m}$ of the form
$X=X^{(1)}\ot\ldots\ot X^{(m)}$ we will denote
\begin{equation*}
X_{\ts p_1\ldots\ts p_m}=\ts
\iota_{\ts p_1}(X^{(1)})\ts\ldots\,\ts\iota_{\ts p_m}(X^{(m)})
\in\mathrm{A}^{\ns\ot\ts n}.
\end{equation*}
We will then extend the notation $X_{\ts p_1\ldots\ts p_m}$ to all elements
$X\in\mathrm{A}^{\ns\ot\ts m}$ by linearity.


\vspace{-\baselineskip}
\refstepcounter{section}
\subsection*{\bf\thesection}
\label{3}

Let the indices $i\,,j\,,k\,,l$ run through $\ts\pm1\lc\pm N\,$.
Put $\bi=0$ if $i>0$ and $\bi=1$ if $i<0\,$. 
Now consider the
$\ZZ_{\ts2}$-graded vector space $\CN\ts$.
Let $e_i\in\CN$ be an element of the standard basis.
The $\ZZ_{\ts2}$-grading on $\CN$ is defined by $\deg e_i=\bi\,$.
We will be using the bilinear form $\langle\ ,\,\rangle$ on 
$\CN$ defined by setting $\langle\,e_i,e_j\,\rangle=\de_{\,i\ts,\ts-j}$
for any indices $i$ and $j\,$. This form is clearly symmetric.

Let $E_{\ts ij}\in\End\CN$ be the standard matrix unit, defined by
$E_{\ts ij}\,e_{\ts k}=\de_{\ts jk}\,e_{\ts i}\,$. The 
associative algebra $\End\CN$
is $\ZZ_{\ts2}$-graded so that $\deg E_{\ts ij}=\bi+\ts\bj\,$.
Hence $\CN$ is a $\ZZ_2$-graded module over $\End\CN\,$. 
For any $n$ we can also identify
the tensor product $(\End\CN)^{\ot\ts n}$ with the algebra
$\End((\CN)^{\ot\ts n})$ acting on the vector space $(\CN)^{\ot\ts n}$ 
by repeatedly using the conventions \eqref{XY} and \eqref{xy}.

We can define an antiautomorphism $\tau$ of $\End\CN$ by mapping
\begin{equation*}
\tau:\,E_{\ts ij}\mapsto E_{\ts ji}\,{(-1)}^{\,\bi\ts\bj\ts+\,\bi}\,.
\end{equation*}
Note that $\tau$ is not involutive, while
$\tau^{\ts2}$ is the {\it parity automorphism\/} of $\End\CN$
\begin{equation}
\label{parity}
E_{\ts ij}\mapsto E_{\ts ij}\,{(-1)}^{\,\bi\ts+\ts\bj}\ts.
\end{equation}

We can also define an involutive automorphism $\pi$ of $\End\CN$ by mapping
\begin{equation*}
\pi:\,E_{\ts ij}\mapsto E_{\,-i\ts,\ts-j}\,.
\end{equation*}
The compositions $\tau\,\pi$ and $\pi\,\tau$ again differ by 
the automorphism \eqref{parity}. Hence both $\tau\,\pi$ and $\pi\,\tau$ are involutive 
antiautomorphisms of $\End\CN\,$. Explicitly,
$$
\tau\,\pi:\,E_{\ts ij}\mapsto E_{\,-j\ts,\ts-i}\,
{(-1)}^{\,\bi\ts\bj\ts+\,\bj}\ts.
$$

Consider the Lie superalgebra $\glN\,$.
To avoid confusion, denote by $e_{\ts ij}$ 
the element of $\glN$ corresponding to $E_{\ts ij}\in\End\CN\,$. 
Then  $\deg e_{\ts ij}=\bi+\ts\bj$ and 
\begin{equation}
\label{glN}
[\,e_{\ts ij}\,,e_{\ts kl}\,]=
\de_{jk}\,e_{\ts il}-
\de_{\ts li}\,e_{\ts kj}\,
{(-1)}^{\ts(\,\bi+\,\bj\,)(\ts\bk+\,\bl\,)}\,.
\end{equation}
It follows that both $\pi$ and $-\,\tau$ are automorphisms of the 
Lie superalgebra $\glN\,$. Let $\om=-\,\tau\,\pi$ be their composition.
The automorphism $\om$ 
is involutive. Further, 
$$
\langle\,Z\,x\ts,y\,\rangle+\langle\,x\ts,\om\ts(Z)\,y\,\rangle
(-1)^{\ts\deg x\deg Z}=0
$$
for any homogeneous $x\ts,y\in\CN$ and $Z\in\glN\,$.
The Lie superalgebra $\glN$ acts on the vector space $\CN$
via the above identification with $\End\CN\,$.

Now the {\it periplectic Lie superalgebra} $\pN$ is
the fixed point subalgebra of $\glN$ with respect to
the automorphism $\om\,$. It can also be defined as 
the subalgebra of $\glN$ preserving the form $\langle\ ,\,\rangle$ 
on $\CN\,$. This subalgebra is spanned by the elements
\begin{equation}
\label{fij}
f_{\ts ij}=e_{\ts ij}+\om\ts(e_{\ts ij})=
e_{\ts ij}-e_{\ts-j\ts,\ts-i}\,
{(-1)}^{\,\bi\ts\bj\ts+\,\bj}\ts.
\end{equation}


\vspace{-\baselineskip}
\refstepcounter{section}
\subsection*{\bf\thesection}
\label{4}

Take the element of the algebra $(\End\CN)^{\ot\ts2}$
\begin{equation}
\label{p}
P=\sum_{i,j}\,E_{\ts ij}\ot E_{\ts ji}\,(-1)^{\,\bj}\ts.
\end{equation}
It acts on the vector space $(\CN)^{\ts\ot\ts2}$ so that
$
e_i\ot e_{j}\mapsto e_j\ot e_i\,{(-1)}^{\,\bi\ts\bj}\ts.
$
Here we identify the algebra $(\End\CN)^{\ot\ts2}$ with the algebra 
$\End((\CN)^{\ts\ot\ts2})$ using~\eqref{XY}. Note that $P^{\ts2}=1\,$.
Also note that
$
(\ts\tau\,\pi\ot\tau\,\pi\ts)(P)=-\,P\,.
$
Now let 
\begin{equation}
\label{pt}
Q=(-\,\tau\,\pi\ot\id\,)(P)=(\,\id\ot\tau\,\pi\ts)(P)\,.
\end{equation}
Explicitly,
\begin{equation}
\label{q}
Q=\sum_{i,j}\,E_{\ts ij}\ot E_{\,-i\ts,\ts-j}\,
{(-1)}^{\,\bi\ts\bj\ts+\ts\bi\ts+\ts\bj}\ts.
\end{equation}
The image of the action of $Q$ on 
$(\CN)^{\ts\ot\ts2}$ is one-dimensional and is spanned~by 
\begin{equation}
\label{qimage}
\sum_i\,e_{\ts i}\ot e_{\,-\ts i}\,{(-1)}^{\,\bi}\,.
\end{equation}
Here we regard $Q$ as an element of 
$\End((\CN)^{\ts\ot\ts2})$
by once again identifying~the latter algebra 
with $(\End\CN)^{\ts\ot\ts2}$ using \eqref{XY}.
Also note that 
\begin{equation}
\label{pq}
P\,Q=-\,Q\,,
\quad
Q\,P=Q
\quad\text{and}\quad
Q^{\ts2}=0\,.
\end{equation}
By \cite[Theorem 4.5]{M}
the supercommutant of the image of $\pN$ in 
$(\End\CN)^{\ot\ts n}$ for any $n$
is generated by all elements $P_{\,pq}$ and $Q_{\,pq}$ where $1\le p<q\le n\,$.
Here we use the standard comultplication
on the universal enveloping algebra of $\pN\,$.


\refstepcounter{section}
\subsection*{\bf\thesection}
\label{5}

Consider a function of complex variables $u\,,\ns v$
with values in 
$(\End\CN)^{\ot\ts2}$ 
\begin{equation*}
R\ts(u\,,\ns v)=1-\frac{P}{u-v}+\frac{Q}{u+v}\,.
\end{equation*}
By \eqref{pt} we have
\vspace{-6pt}
\begin{align}
(\tau\,\pi\ot1)(R(u\,,\ns v))&=R(u\,,-\ts v)\,,
\label{t1}
\\[4pt]
(1\ot\tau\,\pi)(R(u\,,\ns v))&=R(-\ts u\,,\ns v)\,.
\label{t2}
\end{align}
We also have
\begin{equation}
\label{rr}
R\ts(u\ts,\ns v)\,R\ts(-\ts u\,,-\ts v)=1-\frac1{(u-v)^{\ts2}}\,.
\end{equation}
Indeed, due to the relation $P^{\ts2}=1$ and to \eqref{pq}
\begin{gather*}
\left(1-\frac{P}{u-v}+\frac{Q}{u+v}\right)
\left(1+\frac{P}{u-v}-\frac{Q}{u+v}\right)=
\\[4pt]
1-\frac{P^{\ts2}}{(u-v)^{\ts2}}+\frac{P\,Q+Q\,P}{(u-v)(u+v)}
-\frac{Q^{\ts2}}{(u+v)^{\ts2}}=1-\frac1{(u-v)^{\ts2}}\,.
\end{gather*}
\vspace{-4pt}

\begin{theorem}
\label{T1}
The rational function $R\ts(u\,,\ns v)$ satisfies the {Yang-Baxter equation}
in the algebra $(\End\CN)^{\ot\ts 3}(\ts u,v,w\ts)$
\begin{equation}
\label{rrr}
R_{\ts12}(u\,,\ns v)\ts\ts R_{\ts13}(u\,,\ns w)\ts\ts R_{\ts23}(v\,,\ns w)=
R_{\ts23}(v\,,\ns w)\ts\ts R_{\ts13}(u\,,\ns w)\ts\ts R_{\ts12}(u\,,\ns v)\,.
\end{equation}
\end{theorem}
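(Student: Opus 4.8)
The plan is to prove \eqref{rrr} by the method of residues, treating both sides as rational functions of $u,v,w$ with values in $(\End\CN)^{\ot 3}$. Write $F(u,v,w)$ for the difference of the left and right hand sides of \eqref{rrr}. Each factor $R_{pq}$ has only simple poles, and on any one of the six hyperplanes $u=\pm v$, $u=\pm w$, $v=\pm w$ exactly one of the three factors is singular; hence $F$ has at most a simple pole along each of these hyperplanes and no poles elsewhere. Since every factor tends to $1$ as the variables go to infinity, $F$ tends to $0$ there. It therefore suffices to show that the residue of $F$ along each of the six hyperplanes vanishes: then $F$ is a polynomial vanishing at infinity, so $F\equiv0$.

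First I would dispose of the two outer difference hyperplanes $u=v$ and $v=w$. On $u=v$ only $R_{12}(u,v)\sim-\ts P_{12}/(u-v)$ is singular, so the residue of the left hand side is $-\ts P_{12}\,R_{13}(v,w)\,R_{23}(v,w)$ and that of the right hand side is $-\ts R_{23}(v,w)\,R_{13}(v,w)\,P_{12}$. Conjugation by the involution $P_{12}$ relabels the tensor indices $1\leftrightarrow2$, and both $P$ and $Q$ are covariant under it. Consequently $P_{12}\,R_{13}(v,w)=R_{23}(v,w)\,P_{12}$ and $P_{12}\,R_{23}(v,w)=R_{13}(v,w)\,P_{12}$, whence the two residues coincide. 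The hyperplane $v=w$ is handled in the same way using $P_{23}$.

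Next comes the inner difference hyperplane $u=w$, where only $R_{13}(u,w)\sim-\ts P_{13}/(u-w)$ is singular and $P_{13}$ appears sandwiched between the other two factors. Moving $P_{13}$ outward by the same covariance rules turns the residue of the left hand side into $-\ts R_{12}(w,v)\,R_{21}(v,w)\,P_{13}$ and that of the right hand side into $-\ts R_{23}(v,w)\,R_{32}(w,v)\,P_{13}$; here the relabelling interchanges the two factors carried by a single $Q$, so the antisymmetry $Q_{21}=-\,Q_{12}$ coming from \eqref{pq} is used, and it gives $R_{21}(v,w)=R_{12}(-\ts w,-\ts v)$ together with $R_{32}(w,v)=R_{23}(-\ts v,-\ts w)$. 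By the unitarity relation \eqref{rr} each of the products $R_{12}(w,v)\,R_{21}(v,w)$ and $R_{23}(v,w)\,R_{32}(w,v)$ collapses to the same \emph{scalar} $1-(v-w)^{-2}$, so the two residues again agree. Thus all three difference hyperplanes are settled by covariance and unitarity alone.

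The main obstacle is the three sum hyperplanes $u=-v$, $u=-w$, $v=-w$, where the singular term carries the operator $Q$. Since $Q$ is not invertible — indeed $Q^{\ts2}=0$ by \eqref{pq} — it cannot be used to conjugate, so the covariance argument above is unavailable and a genuine crossing computation is required. On $u=-v$, for instance, one must show $Q_{12}\,R_{13}(-\ts v,w)\,R_{23}(v,w)=R_{23}(v,w)\,R_{13}(-\ts v,w)\,Q_{12}$. I would derive this from the crossing relations that express $Q_{12}\,P_{13}$ and $Q_{12}\,Q_{13}$ through operators on the factors $2,3$, which encode the crossing symmetry and are consequences of the identity $Q=(\,\id\ot\tau\,\pi\ts)(P)$ in \eqref{pt}, equivalently of the symmetries \eqref{t1} and \eqref{t2}; after these substitutions the expression should once more reduce, via \eqref{rr}, to one and the same scalar multiple of $Q_{12}$ on each side. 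Controlling the signs produced by the odd form — in particular the antisymmetry $Q_{21}=-\,Q_{12}$ and the fact that $(\tau\,\pi)^{\ts2}$ is the parity automorphism \eqref{parity} — is the delicate point of the argument. The hyperplanes $u=-w$ and $v=-w$ then follow by the analogous crossing computation.
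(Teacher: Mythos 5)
Your proposal is correct, and it takes a genuinely different route from the paper's. The paper proves \eqref{rrr} by full expansion: it multiplies out both sides, matches the coefficients of the partial fractions (using identities such as \eqref{R12}), and thereby reduces the Yang--Baxter equation to the list of operator relations \eqref{R1}--\eqref{S5}, each verified directly, with the $Q$-relations obtained from the $P$-relations by applying $\tau\,\pi$ in a single tensor factor. Your residue argument repackages this: rationality and decay at infinity reduce everything to matching residues along the six hyperplanes; the three difference hyperplanes are handled softly by conjugation-covariance under the super-flip ($P_{12}X_{13}P_{12}=X_{23}$ and its relatives), the antisymmetry $Q_{21}=-\,Q_{12}$ from \eqref{pq}, and unitarity \eqref{rr}; and all of the genuinely periplectic difficulty is concentrated in the $Q$-type poles. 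The steps you carry out are correct, including $R_{21}(v,w)=R_{12}(-\,w,-\,v)$, $R_{32}(w,v)=R_{23}(-\,v,-\,w)$, and the collapse of both $u=w$ residues to $-\,(1-(v-w)^{-2})\,P_{13}$. The step you only sketch does go through: on $u=-\,v$ the crossing relations you invoke are exactly \eqref{R3}--\eqref{R6} of the paper (for instance $Q_{12}P_{13}=-\,Q_{12}Q_{23}$ and $Q_{12}Q_{13}=-\,Q_{12}P_{23}$ on the left, and their right-handed counterparts), and together with \eqref{pq} they already imply the triple-product relations \eqref{S2}, \eqref{S4}, \eqref{S5} that appear when the brackets are opened; both residues then reduce to $(1-(v-w)^{-2})\,Q_{12}$, confirming your prediction of a common scalar multiple of $Q_{12}$. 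In fact the left residue identity is precisely the identity $Q_{12}\,R_{13}(u,v)\,R_{23}(-\,u,v)=Q_{12}\,(1-(u+v)^{-2})$ established inside the paper's proof of Proposition \ref{P3}, specialized by substituting $(-\,v,w)$ for $(u,v)$, and obtained there exactly as you suggest, by transporting \eqref{rr} with $\tau\,\pi$. So what your approach buys is a cleaner organization -- no partial-fraction bookkeeping, and a conceptual explanation of why the difference-type poles are harmless -- while the hard core, the crossing relations for $Q$, is the same computation the paper performs; what the paper's approach buys is that, once the fifteen relations are listed, the verification is entirely mechanical and self-contained. To turn your sketch into a complete proof you would only need to write out the two bracket expansions at $u=-\,v$ (and the analogous ones at $u=-\,w$, where $Q_{13}$ sits between the two regular factors), which is a few lines given the relations above.
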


\begin{proof}
Using the definition of $R\ts(u\,,\ns v)$
the equality in \eqref{rrr} will follow from the 
relations in the algebra $(\End\CN)^{\ot\ts 3}$ displayed below\ts:
\begin{gather}
\label{R1}
P_{\ts12}\,P_{\ts13}=P_{\ts23}\,P_{\ts12}=P_{\ts13}\,P_{\ts23}\,,
\\[4pt]
\label{R2}
P_{\ts13}\,P_{\ts12}=P_{\ts12}\,P_{\ts23}=P_{\ts23}\,P_{\ts13}\,,
\\[4pt]
\label{R3}
Q_{\ts13}\,Q_{\ts12}=-\,P_{\ts23}\,Q_{\ts12}=-\,Q_{\ts13}\,P_{\ts23}\,,
\\[4pt]
\label{R4}
Q_{\ts12}\,Q_{\ts13}=-\,Q_{\ts12}\,P_{\ts23}=-\,P_{\ts23}\,Q_{\ts13}\,,
\\[4pt]
\label{R5}
Q_{\ts12}\,P_{\ts13}=-\,Q_{\ts12}\,Q_{\ts23}=-\,P_{\ts13}\,Q_{\ts23}\,,
\\[4pt]
\label{R6}
P_{\ts13}\,Q_{\ts12}=-\,Q_{\ts23}\,Q_{\ts12}=-\,Q_{\ts23}\,P_{\ts13}\,,
\\[4pt]
\label{R7}
P_{\ts12}\,Q_{\ts13}=Q_{\ts23}\,P_{\ts12}=Q_{\ts23}\,Q_{\ts13}\,,
\\[4pt]
\label{R8}
Q_{\ts13}\,P_{\ts12}=P_{\ts12}\,Q_{\ts23}=Q_{\ts13}\,Q_{\ts23}\,,
\\[4pt]
\label{S1}
P_{\ts12}\,P_{\ts13}\,P_{\ts23}=P_{\ts23}\,P_{\ts13}\,P_{\ts12}\,,
\\[4pt]
\label{S6}
P_{\ts12}\,Q_{\ts13}\,Q_{\ts23}=Q_{\ts23}\,Q_{\ts13}\,P_{\ts12}\,,
\\[4pt]
\label{S7}
Q_{\ts12}\,Q_{\ts13}\,P_{\ts23}=P_{\ts23}\,Q_{\ts13}\,Q_{\ts12}\,,
\\[4pt]
\label{S2}
Q_{\ts12}\,P_{\ts13}\,Q_{\ts23}=0\,,
\\[4pt]
\label{S3}
Q_{\ts23}\,P_{\ts13}\,Q_{\ts12}=0\,,
\\[4pt]
\label{S4}
P_{\ts12}\,P_{\ts13}\,Q_{\ts23}=
P_{\ts23}\,Q_{\ts13}\,P_{\ts12}=
Q_{\ts12}\,P_{\ts13}\,P_{\ts23}=
-\,Q_{\ts12}\,Q_{\ts13}\,Q_{\ts23}\,,
\\[4pt]
\label{S5}
Q_{\ts23}\,P_{\ts13}\,P_{\ts12}=
P_{\ts12}\,Q_{\ts13}\,P_{\ts23}=
P_{\ts23}\,P_{\ts13}\,Q_{\ts12}=
-\,Q_{\ts23}\,Q_{\ts13}\,Q_{\ts12}\,.
\end{gather}

To prove \eqref{rrr}, the relations \eqref{R1} and \eqref{R2}
are used along with the identity 
\begin{equation}
\label{R12}
\frac1{u-v}\,\frac1{u-w}-
\frac1{u-v}\,\frac1{v-w}+
\frac1{u-w}\,\frac1{v-w}=0
\end{equation}
which is easy to verify. 
The relations \eqref{R3} and \eqref{R4} are used with the identity
\begin{equation*}
\frac1{u+v}\,\frac1{u+w}+
\frac1{u+v}\,\frac1{v-w}-
\frac1{u+w}\,\frac1{v-w}=0
\end{equation*}
which is obtained from \eqref{R12} by changing the sign of $u\,$.
The relations \eqref{R5}~and \eqref{R6} are used with the identity
\begin{equation*}
\frac1{u+v}\,\frac1{u-w}+\frac1{u+v}\,\frac1{v+w}-
\frac1{u-w}\,\frac1{v+w}=0
\end{equation*}
which is obtained from \eqref{R12} by changing the sign of $v\,$.
The relations \eqref{R7}~and \eqref{R8} are used with the identity
\begin{equation*}
\frac1{u-v}\,\frac1{u+w}-\frac1{u-v}\,\frac1{v+w}+
\frac1{u+w}\,\frac1{v+w}=0
\end{equation*}
which is obtained from \eqref{R12} by changing the sign of $w\,$.
Finally, the relations \eqref{S4} and \eqref{S5} are used 
along with another identity which is easy to verify\ts: 
\begin{align*}
\frac1{u-v}\,\frac1{u-w}\,\frac1{v+w}&-
\frac1{u-v}\,\frac1{u+w}\,\frac1{v-w}\,\ts+
\\[4pt]
\frac1{u+v}\,\frac1{u-w}\,\frac1{v-w}&-
\frac1{u+v}\,\frac1{u+w}\,\frac1{v+w}=0\,.
\end{align*}

Let us verify the relations \eqref{R1} to \eqref{R8}. 
The relations \eqref{R1} and \eqref{R2} follow from
the description of the action of $P$ on 
the vector space $(\CN)^{\ts\ot\ts2}$ as provided in the beginning of
Section~\ref{4}.
Due to \eqref{pt}, the relations \eqref{R3} and \eqref{R4}
can be obtained by applying
the antiautomorphism $\tau\,\pi$ of $\End\CN$ to the relations
\eqref{R1} and \eqref{R2} relative to the first tensor factor of
$(\End\CN)^{\ot\ts 3}$.
Similarly, the relations \eqref{R5} and \eqref{R6}
can be obtained by applying $\tau\,\pi$
to the relations
\eqref{R1} and \eqref{R2} relative to the second tensor factor of
$(\End\CN)^{\ot\ts 3}$. 
The relations \eqref{R7} and \eqref{R8}
can be obtained by applying $\tau\,\pi$ to the relations
\eqref{R1} and \eqref{R2} relative to the third tensor factor of
$(\End\CN)^{\ot\ts 3}$. 

Next let us verify the relations \eqref{S1} to \eqref{S5}. 
The above mentioned description of the action of $P$ on 
$(\CN)^{\ts\ot\ts2}$ shows
that either side of \eqref{S1} equals 
$P_{\ts13}\,$. This description also implies \eqref{S6} and \eqref{S7}.
It also implies that
$$
Q_{\ts12}\,P_{\ts13}\,Q_{\ts23}=
Q_{\ts12}\,Q_{\ts21}\,P_{\ts13}=
Q_{\ts12}\,P_{\ts12}\,Q_{\ts21}\,P_{\ts13}=
Q_{\ts12}\,Q_{\ts12}\,P_{\ts12}\,P_{\ts13}=0
$$
where we also used \eqref{pq}. So we get the relation~\eqref{S2}.
\text{Similarly, we get~\eqref{S3}\ts:} 
$$
Q_{\ts23}\,P_{\ts13}\,Q_{\ts12}=
P_{\ts13}\,Q_{\ts21}\,Q_{\ts12}=
-\,P_{\ts13}\,Q_{\ts21}\,P_{\ts12}\,Q_{\ts12}=
-\,P_{\ts13}\,P_{\ts12}\,Q_{\ts12}\,Q_{\ts12}=0\,.
$$
Also similarly, each of the first three products in \eqref{S4} is equal to
$-\,P_{\ts13}\,Q_{\ts23}\,$. By using the first equality in \eqref{R4}
and then the second equality in \eqref{R5},
$$
-\,Q_{\ts12}\,Q_{\ts13}\,Q_{\ts23}=
Q_{\ts12}\,P_{\ts23}\,Q_{\ts23}=
-\,Q_{\ts12}\,Q_{\ts23}=-\,P_{\ts13}\,Q_{\ts23}\,.
$$
Hence we get the last equality in \eqref{S4}.
The proofs of all three equalities~in~\eqref{S5} are very
similar to those in \eqref{S4} and are omitted here.
\qed 
\end{proof}


\refstepcounter{section}
\subsection*{\bf\thesection}
\label{6}

We will first define the {\it extended Yangian\/}
of the Lie superalgebra $\pN\,$. This~is a complex associative
unital 
algebra $\XpN$ with a countable set of generators 
\begin{equation}
\label{Tijr}
T^{\ts(r)}_{ij}
\quad\text{where}\quad
r=1,2,\,\ldots
\quad\text{and}\quad
i\,,j=\pm\,1\lc\!\pm N\,.
\end{equation}
The algebra $\XpN$ is $\ZZ_{\ts2}$-graded so that
$\deg T_{ij}^{\ts(r)}=\bi+\bj$ for all $r\,$.
To write~down defining relations for the
generators \eqref{Tijr} of $\XpN$ we will use the formal power
series in $u^{-1}$ with coefficients from $\XpN$
\begin{equation}
\label{3.0}
T_{ij}(u)=\de_{\ts ij}\cdot1+
T_{ij}^{\ts(1)}\ts u^{-1}+T_{ij}^{\ts(2)}\ts u^{-2}+\ldots\,.
\end{equation}

Now combine all the series \eqref{3.0} into a single element
\begin{equation}
\label{tu}
T(u)=\sum_{i,j}\,E_{\ts ij}\ot T_{ij}(u)
\in(\End\CN)\ot\XpN\ts[[u^{-1}]]\,.
\end{equation}
For any $n$ and any $p=1\lc n$ we will denote
\begin{equation}
T_p(u)=(\iota_p\ot\id)\ts(\ts T(u))
\in(\End\CN)^{\ot\ts n}\ot\XpN\,[[u^{-1}]]\,.
\label{3.22}
\end{equation}
By using this notation for $n=2$ the defining relations of $\XpN$ are
\begin{equation}
\label{3.3}
(R\ts(u\,,\ns v)\ot1)\,\ts T_1(u)\,T_2(v)=
T_2(v)\,T_1(u)\,(R\ts(u\,,\ns v)\ot1)\,.
\end{equation}
Namely, after multiplying each side of \eqref{3.3} by $u^{\ts2}-v^{\ts2}$ 
it becomes an equality~of formal Laurent series in $u^{-1},v^{-1}$
with coefficients in 
$(\End\CN)^{\ot\ts2}\ot\XpN\,$.
By expanding the relation \eqref{3.3} 
in the basis of $(\End\CN)^{\ot\ts2}$ consisting of
$$
E_{\ts ij}\ot E_{\ts kl}\,
{(-1)}^{\ts(\,\bi+\,\bj\,)(\ts\bk+\,\bl\,)}
\quad\text{where}\quad
i\,,j\,,k\,,l=\pm\,1\lc\!\pm N
$$
while using the definitions \eqref{p} and \eqref{q},
we get a collection of relations 
\begin{gather}
[\,T_{ij}(u)\ts,T_{kl}(v)\,]
=\frac{\,T_{kj}(u)\,T_{il}(v)-T_{kj}(v)\,T_{il}(u)}{u-v}\,
(-1)^{\ts\,\bi\ts\bk\ts+\,\bi\ts\bl\,+\,\bk\ts\bl}
\notag
\\[4pt]
-\,\,\de_{\ts -i,\ts k}\,\sum_h\,
\frac{\,T_{\ts hj}(u)\,T_{\ts -h,\ts l}(v)}{u+v}\,
(-1)^{\ts\,\bh\bl\ts+\,\bi\ts\bl\ts+\,\bi}
\notag
\\
+\,\,\de_{\ts j,-l}\,\,\sum_h\,
\frac{\,T_{k,-h}(v)\,T_{ih}(u)}{u+v}\,
(-1)^{\ts\,\bh\bk\ts+\,\bi\ts\bk\ts+\,\bi\ts\bl+\,\bh}
\label{rttttr}
\end{gather}
where $h=\pm1\lc\pm N$ and the square brackets stand for the supercommutator.

By comparing \eqref{rrr} with \eqref{3.3},
we obtain that for any $t\in\CC$ the assignment
\begin{equation*}
(\End\CN)\ot\XpN\ts[[u^{-1}]]\to(\End\CN)^{\ot\ts2}\ts[[u^{-1}]]:\, 
T(u)\mapsto R(u\,,t\ts)
\end{equation*}
defines a representation $\XpN\to\End\CN$. We will denote
this representation by $\rho_{\,t}\,$.
By the definitions \eqref{p} and \eqref{q}, 
for any $r\ge0$
\begin{equation*}
\rho_{\,t}:T_{ij}^{\ts(r+1)}\mapsto
(-\ts E_{\ts ji}+(-1)^{\ts r}\ts\tau\,\pi\ts(E_{\ts ji}))\,
t^{\,r}\ts{(-1)}^{\,\bj}\ts.
\end{equation*}


\refstepcounter{section}
\subsection*{\bf\thesection}
\label{7}

Due to \eqref{3.3} for any formal power series $c\ts(u)$ in $u^{-1}$
with coefficients from $\CC$ and leading term $1\ts$,
an automorphism of the algebra $\XpN$ can be defined by 
\begin{equation}
\label{muf}
T_{ij}(u)\mapsto c\ts(u)\,T_{ij}(u)\,.
\end{equation}

Now for all $i$ and $j$ define a 
formal power series $\TP_{ij}(u)$ in $u^{-1}$
with coefficients in the algebra $\XpN$
by using the element inverse to \eqref{tu}\ts:
\begin{equation}
\label{Tui}
T(u)^{-1}=\sum_{i,j}\,E_{\ts ij}\ot\TP_{ij}(u)\,.
\end{equation}

\begin{proposition}
\label{P1}
These mappings define two commuting 
antiautomorphisms of the algebra\/ $\XpN\!:$
\vspace{-8pt}
\begin{align}
\label{M}
T_{ij}(u)&\mapsto T_{ij}(-\ts u)\,,
\\[2pt]
\label{S}
T_{ij}(u)&\mapsto\TP_{ij}(u)\,.
\end{align}
\end{proposition}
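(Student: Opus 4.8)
The plan is to treat both prescriptions as acting entrywise on the generating matrix $T(u)$ of \eqref{tu}, and to use the presentation of $\XpN$ by \eqref{3.3}: a prescription on the generators \eqref{Tijr} extends to an antiautomorphism precisely when the images satisfy the relation obtained by feeding the prescription into \eqref{3.3}. First I would record a sign‑counting lemma valid for any antiautomorphism $\phi$ of $\XpN$. Writing $\phi(T)(u)$ for the matrix whose $(i,j)$ entry is $\phi\ts(T_{ij}(u))$ and $\phi(T)_p(u)$ for its $p$‑th copy, a direct computation from the tensor‑product sign rule of Section~\ref{2} and from \eqref{bb} gives
\begin{equation*}
\phi\ts(T_1(u)\,T_2(v))=\phi(T)_2(v)\,\phi(T)_1(u),\qquad\phi\ts(T_2(v)\,T_1(u))=\phi(T)_1(u)\,\phi(T)_2(v),
\end{equation*}
where $\phi$ is applied to the $\XpN$ entries. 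The point is that the sign $(-1)^{(\bi+\bj)(\bk+\bl)}$ produced when two entries of $\XpN$ are transposed by \eqref{bb} cancels the sign coming from the super tensor product of the auxiliary factors $E_{\ts ij}\ot E_{\ts kl}$, so that no transpose of the auxiliary matrices is needed. Since $R\ts(u\,,\ns v)$ has even scalar entries in $\XpN$, the entrywise $\phi$ also commutes with left and right multiplication by $R_{\ts12}(u\,,\ns v)$. Feeding a prescription into \eqref{3.3} therefore always produces the single identity
\begin{equation*}
R_{\ts12}(u\,,\ns v)\,\phi(T)_2(v)\,\phi(T)_1(u)=\phi(T)_1(u)\,\phi(T)_2(v)\,R_{\ts12}(u\,,\ns v),
\end{equation*}
and it remains to verify this in each case.

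For \eqref{S} we have $\phi(T)(u)=\TP(u)=T(u)^{-1}$ by \eqref{Tui}. Every factor of \eqref{3.3} is invertible, since $T(u)=1+O(u^{-1})$ by \eqref{3.0} and $R\ts(u\,,\ns v)$ is invertible by \eqref{rr}; inverting \eqref{3.3} and multiplying by $R_{\ts12}(u\,,\ns v)$ on both sides gives exactly the last displayed identity with $\phi(T)=T^{-1}$. Thus \eqref{S} is an antiautomorphism, and it preserves the $\ZZ_{\ts2}$‑grading because $T(u)^{-1}$ is again an even matrix. For \eqref{M} we have $\phi(T)(u)=T(-\ts u)$, so the identity to check reads $R_{\ts12}(u\,,\ns v)\,T_2(-\ts v)\,T_1(-\ts u)=T_1(-\ts u)\,T_2(-\ts v)\,R_{\ts12}(u\,,\ns v)$. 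I would obtain it from \eqref{3.3} with $u\,,\ns v$ replaced by $-\ts u\,,-\ts v$ by multiplying on the left by $R_{\ts12}(u\,,\ns v)$ and using \eqref{rr}: the products $R\ts(u\,,\ns v)\,R\ts(-\ts u\,,-\ts v)$ and $R\ts(-\ts u\,,-\ts v)\,R\ts(u\,,\ns v)$ both equal the scalar $1-(u-v)^{-2}$, so $R_{\ts12}(-\ts u\,,-\ts v)^{-1}$ is a scalar multiple of $R_{\ts12}(u\,,\ns v)$ and the scalar cancels. Hence \eqref{M} is an antiautomorphism as well, visibly involutive.

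To prove that the two antiautomorphisms commute I would compare them on the generators \eqref{Tijr}; since a composite of two antiautomorphisms is an automorphism, agreement on generators suffices. On one side $\si\ts\beta\ts(T_{ij}(u))=\si\ts(T_{ij}(-\ts u))=\TP_{ij}(-\ts u)$, the $(i,j)$ entry of $T(-\ts u)^{-1}$, which is immediate because $\si$ acts on the coefficients of the series $T_{ij}(-\ts u)$. On the other side $\beta\ts\si\ts(T_{ij}(u))=\beta\ts(\TP_{ij}(u))$; applying the antiautomorphism $\beta$ entrywise to the identity $T(u)\,T(u)^{-1}=1$ and using \eqref{bb} identifies the matrix with entries $\beta\ts(\TP_{ij}(u))$ as an inverse of $\beta(T)(u)=T(-\ts u)$, whence $\beta\ts(\TP_{ij}(u))=\TP_{ij}(-\ts u)$ by uniqueness of the inverse. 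The two composites then agree on all generators and so coincide.

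The part I expect to be the main obstacle is the disciplined handling of the $\ZZ_{\ts2}$‑signs, in two places. First, in the reversal lemma of the opening paragraph one must check that, in contrast with the transposition antiautomorphism of $\YN$, here the two auxiliary legs contribute a sign $(-1)^{(\bi+\bj)(\bk+\bl)}$ that is exactly annihilated by the sign in \eqref{bb}, so that the index‑preserving prescriptions \eqref{M} and \eqref{S}, and not transposed ones, are the correct antiautomorphisms. Second, the step $\beta\ts(\TP_{ij}(u))=\TP_{ij}(-\ts u)$ requires reconciling the super‑matrix inverse with the order‑reversing $\beta$; here the naive inversion identity must be supplemented by the symmetry relations among the generators that follow from the constant term in $u\,,\ns v$ of \eqref{3.3}. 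Once these sign computations are settled, the remaining structural steps — inverting \eqref{3.3}, and the substitution $u\,,\ns v\mapsto-\ts u\,,-\ts v$ together with \eqref{rr} — are short.
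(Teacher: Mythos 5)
Your verification that \eqref{M} and \eqref{S} are antihomomorphisms is correct and is essentially the paper's own argument: your ``reversal lemma'' is the paper's comparison of \eqref{TT} with \eqref{MM}, and the two reversed relations are obtained from \eqref{3.3} exactly as you describe (via the substitution $u\ts,v\mapsto-\ts u\ts,-\ts v$ and \eqref{rr} for \eqref{M}, by inverting for \eqref{S}). One point you pass over: bijectivity of \eqref{S}. Involutivity settles this for \eqref{M}, but \eqref{S} is not involutive, and the paper explicitly defers its bijectivity to Proposition \ref{P5}.

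The genuine gap is in your commutativity argument, at the step $\beta\ts(\TP_{ij}(u))=\TP_{ij}(-\ts u)$ ``by uniqueness of the inverse''. Apply $\beta$ entrywise to the component form
$\sum_k T_{ik}(u)\,\TP_{kj}(u)\,(-1)^{(\bk+\bj)(\bi+\bk)}=\de_{\ts ij}$
of $T(u)\ts T(u)^{-1}=1$: the sign produced by \eqref{bb} cancels the super-matrix sign, leaving
\begin{equation*}
\sum_k\,\beta\ts(\TP_{kj}(u))\,T_{ik}(-\ts u)\;=\;\de_{\ts ij}\,.
\end{equation*}
This exhibits the matrix with entries $\beta\ts(\TP_{kj}(u))$ as an inverse of $T(-\ts u)$ only with respect to the \emph{opposite} multiplication of $\XpN$ (entries in reversed order, with no signs), not in the algebra $(\End\CN)\ot\XpN\,[[u^{-1}]]$ where $\TP_{kj}(-\ts u)$ is defined via \eqref{Tui}. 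In a noncommutative superalgebra these are different notions of inverse, and no uniqueness argument identifies them. In fact the identity you need fails in $\XpN$: by \eqref{TPij}, $\TP_{ij}(u)=Z(-\ts u)^{-1}\ts T_{-j,-i}(-\ts u)\,(-1)^{\bi\ts\bj+\bj}$, and applying \eqref{M} to \eqref{Z1} (then relabelling the summation index $k\mapsto-\ts k$ and the free indices) shows that \eqref{M} fixes every coefficient of $Z(u)$; consequently
\begin{equation*}
\beta\ts(\TP_{ij}(u))\;=\;Z(-\ts u)^{-1}\ts Z(u)\,\TP_{ij}(-\ts u)\,,
\end{equation*}
so your identity --- and with it the commutativity of \eqref{M} and \eqref{S} --- is equivalent to $Z(u)=Z(-\ts u)$ in $\XpN$. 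That is not available here: the representation $\rho_{\,t}$ of Section \ref{6} sends $Z(u)$ to $1-(u+t)^{-2}$ (Section \ref{11}), which is not an even function of $u$. What does hold on $\XpN$ is the relation $\si\ts\beta=\si^{\ts2}\,\beta\ts\si$, where $\si^{\ts2}$ is the central automorphism computed in Proposition \ref{P5}; the two maps therefore commute precisely where $\si^{\ts2}=\id$, in particular on the quotient $\YpN$ where $Z(u)=1$. For comparison, the paper's own treatment of this part of the proposition is the single assertion that \eqref{S} ``clearly'' commutes with \eqref{M}; your attempt to supply an actual proof runs into this obstruction, which lies in $\XpN$ itself (it is exactly the failure of $Z(u)=Z(-\ts u)$) and not merely in the sign bookkeeping you anticipated as the main difficulty.
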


\begin{proof}
By using the notation \eqref{3.22} for $n=2$ we get the equalities
\begin{align}
\label{TT}
T_1(u)\,T_2(v)&=\sum_{i,j,k,l}\,
E_{\ts ij}\ot E_{\ts kl}\ot T_{ij}(u)\,T_{kl}(v)\,
{(-1)}^{\ts(\,\bi+\,\bj\,)(\ts\bk+\,\bl\,)}\,,
\\
\label{MM}
T_2(-v)\,T_1(-u)&=\sum_{i,j,k,l}\,
E_{\ts ij}\ot E_{\ts kl}\ot T_{kl}(-v)\,T_{ij}(-u)\,.
\end{align}
Due to \eqref{bb}
the antihomomorphism property of \eqref{M} follows from the relation
\begin{equation*}
(R\ts(u\,,\ns v)\ot1)\,T_2(-v)\,T_1(-u)=
T_1(-u)\,T_2(-v)\,(R\ts(u\,,\ns v)\ot1)
\end{equation*}
which 
is obtained from \eqref{3.3} by using \eqref{rr}.
The antihomomorphism~\eqref{M} is clearly involutive and therefore bijective.

Similarly to \eqref{MM}, by again using
the notation \eqref{3.22} for $n=2$ we get
\begin{equation*}
T_2(v)^{-1}\,T_1(u)^{-1}=\sum_{i,j,k,l}\,
E_{\ts ij}\ot E_{\ts kl}\ot\TP_{kl}(v)\,\TP_{ij}(u)\,.
\end{equation*}
By comparing this with \eqref{TT},
the antihomomorphism property of \eqref{S} follows from the relation
$$
(R\ts(u\,,\ns v)\ot1)\ts\,T_2(v)^{-1}\,T_1(u)^{-1}=
T_1(u)^{-1}\,T_2(v)^{-1}\ts(R\ts(u\,,\ns v)\ot1)
$$
which 
is obtained by multiplying both sides of the defining relation \eqref{3.3} 
on the left and right by $T_2(v)^{-1}$ and then by $T_1(u)^{-1}$.

The antihomomorphism \eqref{S} clearly commutes with \eqref{M}.
The bijectivity of the antihomomorphism \eqref{S}
will follow from Proposition \ref{P5} below. 
\qed
\end{proof}

Furthermore, let us introduce the element of 
$(\End\CN)\ot\XpN\ts[[u^{-1}]]$
\begin{gather*}
\TA(u)=(\ts\tau\,\pi\ot\id\,)(T(u))=
\\[6pt]
\sum_{i,j}\,E_{\,-j\ts,\ts-i}\ot T_{ij}(u)\,
{(-1)}^{\,\bi\ts\bj\ts+\,\bj}
=\sum_{i,j}\,E_{\ts ij}\ot T_{\,-j\ts,\ts-i}\ts(u)\,
{(-1)}^{\,\bi\ts\bj\ts+\,\bj}\ts.
\end{gather*}

\begin{proposition}
\label{P2}
An automorphism of\/ $\XpN$ commuting with \eqref{M} 
is defined by 
\begin{equation}
\label{T}
T_{ij}(u)\mapsto 
T_{\,-j\ts,\ts-i}\ts(u)\,
{(-1)}^{\,\bi\ts\bj\ts+\,\bj}\ts.
\end{equation}
\end{proposition}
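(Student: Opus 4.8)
The plan is to verify that the mapping \eqref{T} preserves the defining relations \eqref{3.3}, so that it extends to an endomorphism of $\XpN$, and then to check bijectivity and commutativity with \eqref{M} directly on the generators. Write $\phi$ for the mapping \eqref{T}; by the definition of $\TA(u)$ its images are exactly the entries of $\TA(u)=(\tau\ts\pi\ot\id)(T(u))$, that is $\TA(u)=\sum_{i,j}E_{\ts ij}\ot\phi(T_{ij}(u))$. Since $\XpN$ is the algebra on the generators \eqref{Tijr} subject only to \eqref{3.3}, it suffices to show that the series $\TA(u)$ satisfies the same relation, namely $(R(u,v)\ot1)\ts\TA_1(u)\ts\TA_2(v)=\TA_2(v)\ts\TA_1(u)\ts(R(u,v)\ot1)$, where $\TA_p(u)=(\iota_{\ts p}\ot\id)(\TA(u))$ as in \eqref{3.22}.

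To obtain this I would apply the antiautomorphism $\tau\ts\pi$ simultaneously to both tensor factors of $(\End\CN)^{\ot\ts2}$ in \eqref{3.3}, leaving the $\XpN$-factor untouched. On the $R$-matrix this operation gives $(\tau\ts\pi\ot\tau\ts\pi)(R(u,v))=R(-\ts u,-\ts v)$ by \eqref{t1} followed by \eqref{t2}. On each side of \eqref{3.3} the antiautomorphism property \eqref{bb} of $\tau\ts\pi$ reverses the order of the matrix factors contributed by $R$ and by the $T_p$, while the series $T_{ij}(u)$ and $T_{kl}(v)$ stay attached to their matrix units; reassembling the result I expect to arrive at the identity
\begin{equation*}
\TA_1(u)\ts\TA_2(v)\ts(R(-\ts u,-\ts v)\ot1)=(R(-\ts u,-\ts v)\ot1)\ts\TA_2(v)\ts\TA_1(u)\,.
\end{equation*}
Crucially the order of $\TA_1(u)$ and $\TA_2(v)$ in the $\XpN$-factor is not reversed, which is exactly what makes \eqref{T} an automorphism rather than an antiautomorphism.

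Finally, recognizing from \eqref{rr} that $R(u,v)\ts R(-\ts u,-\ts v)$ equals the scalar $1-(u-v)^{-2}$, so that $R(-\ts u,-\ts v)$ is proportional to the inverse of $R(u,v)$, I would substitute this into the displayed identity, cancel the scalar, and conjugate by $R(u,v)\ot1$ to recover precisely \eqref{3.3} for $\TA(u)$; as below \eqref{3.3} this is read as an identity of Laurent series after clearing the denominator $u^{\ts2}-v^{\ts2}$. Hence $\phi$ is an algebra endomorphism of $\XpN$. Because $\tau\ts\pi$ is involutive, a one-line computation on each generator $T_{ij}(u)$ gives $\phi^{\ts2}=\id$, so $\phi$ is an involutive, hence bijective, automorphism. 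Commutativity with \eqref{M} is immediate on the generators, since \eqref{M} only substitutes $-\ts u$ for $u$ while \eqref{T} only relabels the indices and parities, and maps agreeing on generators agree everywhere.

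The step I expect to be the main obstacle is the sign bookkeeping in the middle paragraph: the map $\tau\ts\pi\ot\tau\ts\pi$ is \emph{not} literally an antiautomorphism of the $\ZZ_2$-graded algebra $(\End\CN)^{\ot\ts2}$, so the Koszul signs produced by the graded multiplication \eqref{XY} and by \eqref{bb} must be tracked and shown to cancel. This cancellation should work precisely because $\deg E_{\ts ij}=\deg T_{ij}(u)=\bi+\bj$, which ties the signs on the matrix side to those on the $\XpN$-side; confirming it is where the real work lies.
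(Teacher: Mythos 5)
Your proposal is correct and follows essentially the same route as the paper's own proof: you apply $\tau\,\pi\ot\tau\,\pi\ot\id$ to \eqref{3.3}, use \eqref{t1},\eqref{t2} to arrive at exactly the paper's intermediate identity $\TA_1(u)\,\TA_2(v)\,(R(-u,-v)\ot1)=(R(-u,-v)\ot1)\,\TA_2(v)\,\TA_1(u)$, and then multiply by $R(u,v)\ot1$ and invoke \eqref{rr} to recover \eqref{3.3} for $\TA(u)$, concluding with the same involutivity and commutation-with-\eqref{M} remarks. The sign issue you flag is real but resolves exactly as you anticipate, since $\be\ot\be$ is again an antiautomorphism in the sense of \eqref{bb} for the graded tensor product, a point the paper also leaves implicit.
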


\newpage

\begin{proof}
The composition $\tau\,\pi$ is an antiautomorphism of the $\ZZ_2$-graded 
associative algebra $\End\CN\,$. Therefore by applying the map 
$\tau\,\pi\ot\tau\,\pi\ot\id$ to both sides of \eqref{3.3} 
and then using \eqref{t1},\eqref{t2} we get the relation
\begin{equation}
\label{tata}
\TA_1(u)\,\TA_2(v)\,(R\ts(-\ts u\,,-\ts v)\ot1)=
(R\ts(-\ts u\,,-\ts v)\ot1)\,\TA_2(v)\,\TA_1(u)\,.
\end{equation}
Here for $p=1,2$ we denote
\begin{equation*}
\TA_p(u)=\iota_p\ot\id\,(\ts\TA(u))
\in(\End\CN)^{\ot\ts 2}\ot\XpN\,[[u^{-1}]]\,.
\end{equation*}
Multiplying \eqref{tata} on the left and right by
$R\ts(u\,,\ns v)\ot1$ and using \eqref{rr} yields
\begin{equation*}
(R\ts(u\,,\ns v)\ot1)\,\TA_1(u)\,\TA_2(v)=
\TA_2(v)\,\TA_1(u)\,(R\ts(u\,,\ns v)\ot1)
\end{equation*}
which yields the homomorphism property of \eqref{T}.
This homomorphism is clearly involutive, hence bijective.
It clearly commutes with \eqref{M} as well.
\qed
\end{proof}


\vspace{-\baselineskip}
\refstepcounter{section}
\subsection*{\bf\thesection}
\label{8}
 
Let us multiply the 
relation \eqref{3.3}
by $u+v$ and then set $v=-\ts u\,$. We get 
\begin{equation}
\label{QQ}
(Q\ot1)\,\ts T_1(u)\,T_2(-\ts u)=
T_2(-\ts u)\,T_1(u)\,(Q\ot1)\,.
\end{equation}
Since the image of the action of $Q$ on $(\CN)^{\ts\ot\ts2}$
is one-dimensional, either side of the last displayed relation
must be equal to $Q\ot Z(u)$ where $Z(u)$ is a power series 
in $u^{-1}$ with coefficients from $\XpN\,$.
It is immediate 
that every coefficient of the series $Z(u)$ has $\ZZ_{\ts2}$-degree $0$ and
that the leading term of $Z(u)$ is $1\,$. Hence
$$
Z(u)=1+Z^{\ts(1)}\ts u^{-1}+Z^{\ts(2)}\ts u^{-2}+\ldots\,.
$$

\begin{proposition}
\label{P3}
The elements $Z^{\ts(1)},Z^{\ts(2)}\ldots$ of the algebra\/
$\XpN$ are central.
\end{proposition}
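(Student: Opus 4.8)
The plan is to show that $Z(u)$ commutes with every generating series $T_{kl}(v)$, which immediately forces all coefficients $Z^{\ts(r)}$ to be central. The key device is the element $Q\ot Z(u)$ sitting inside $(\End\CN)^{\ot2}\ot\XpN\,[[u^{-1}]]$, which equals either side of \eqref{QQ}. Since the image of $Q$ on $(\CN)^{\ts\ot2}$ is one-dimensional and spanned by the vector \eqref{qimage}, an equation of the form $Q\ot A = Q\ot B$ in $(\End\CN)^{\ot2}\ot\XpN$ implies $A=B$; this is the mechanism by which I will extract scalar (in the $\End\CN$ sense) consequences and isolate $Z(u)$ from matrix identities.

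First I would introduce a third tensor slot and study how $Q_{\ts12}\ot Z(u)$ interacts with $T_3(v)$. Concretely, starting from \eqref{QQ} written in the first two matrix factors, I would multiply by $T_3(v)$ on the appropriate side and then use the defining relation \eqref{3.3} repeatedly to move $T_3(v)$ past $T_1(u)$ and $T_2(-u)$, picking up factors of $R_{\ts13}(v,u)$ and $R_{\ts23}(v,-u)$. The aim is to produce a relation in which $Q_{\ts12}\,T_3(v)$ appears multiplied by $Z(u)$ on one side and by $T_3(v)$ times $Z(u)$ on the other, using the relations \eqref{R3}--\eqref{R8} together with $P\,Q=-Q$, $Q\,P=Q$, $Q^{\ts2}=0$ from \eqref{pq} to simplify the products $R_{\ts13}\,Q_{\ts12}$ and $R_{\ts23}\,Q_{\ts12}$. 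The combinatorial heart is that conjugating $Q_{\ts12}$ by the $R$-factors in the two positions should return $Q_{\ts12}$ up to the interchange of the roles of slots $1$ and $2$ forced by $Q$, so that the dressing by $T_3(v)$ cancels and one is left with $Q_{\ts12}\ot\bigl[\ts Z(u),T_{kl}(v)\ts\bigr]=0$ after expanding in the matrix basis of the first two factors.

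Invoking one-dimensionality of $\im Q$ then yields $[\ts Z(u),T_{kl}(v)\ts]=0$ for all $k,l$, and reading off the coefficient of each power $v^{\ts-s}$ shows $Z(u)$ commutes with every generator $T_{kl}^{\ts(s)}$. Since these generate $\XpN$, every coefficient $Z^{\ts(r)}$ is central. I would also note that centrality of $Z(u)$ is consistent with the two descriptions in \eqref{QQ}, and that the antiautomorphism \eqref{M} of Proposition \ref{P1} sends $Z(u)$ to $Z(-u)$, which can serve as a consistency check rather than a step in the proof.

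The main obstacle I anticipate is purely computational bookkeeping: correctly tracking the rational coefficients $1/(v\pm u)$ and the sign factors $(-1)^{\ts\bi\ts\bk+\cdots}$ while commuting $T_3(v)$ through, and verifying that the $P$- and $Q$-contributions from $R_{\ts13}$ and $R_{\ts23}$ collapse by \eqref{R3}--\eqref{R8} and \eqref{pq} exactly so as to reconstruct a clean $Q_{\ts12}$ on both sides with no residual matrix terms. Getting the cancellation of the $R$-dressing to hold identically in $u,v$ — rather than only after clearing denominators — is the delicate point; I expect it to follow from the same Yang--Baxter-type identities used to prove \eqref{rrr}, so the bulk of the work is organizing those relations rather than discovering a new idea.
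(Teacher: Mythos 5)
Your proposal is correct and takes essentially the same route as the paper: introduce a third tensor slot, use \eqref{3.3} twice to move $T_3(v)$ past $T_1(u)\,T_2(-\ts u)$ at the cost of the factors $R_{\ts13}$ and $R_{\ts23}$, multiply on the left by $Q_{\ts12}\ot1$, and exploit the collapse $Q_{\ts12}\,R_{\ts13}(u\,,\ns v)\,R_{\ts23}(-\ts u\,,\ns v)=Q_{\ts12}\bigl(1-\tfrac{1}{(u+v)^{\ts2}}\bigr)$ together with $Q\neq0$ to extract $[\,Z(u),T_{kl}(v)\,]=0$. The only (minor) divergence is in how that collapse is verified: the paper obtains it slickly by applying the antiautomorphism $\tau\,\pi$ in the first tensor factor to a $P_{\ts12}$-identity following from \eqref{rr}, whereas you propose a direct expansion using \eqref{R3}--\eqref{R8} and \eqref{pq}, which does work (the cross terms cancel in pairs and the only surviving correction is $-\ts Q_{\ts12}/(u+v)^{\ts2}$).
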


\begin{proof}
We will work with the elements \eqref{3.22} where $n=3\,$. 
Using \eqref{3.3}~we~get
\begin{gather}
(R_{\ts13}(u\,,\ns v)\,R_{\ts23}(-\ts u\,,\ns v)\ot1)\,
T_1(u)\,T_2(-\ts u)\,T_3(v)=
\notag
\\[4pt]
T_3(v)\,T_1(u)\,T_2(-\ts u)\,
(R_{\ts13}(u\,,\ns v)\,R_{\ts 23}(-\ts u\,,\ns v)\ot1)\,.
\label{3.666666}
\end{gather}

Due to \eqref{rr} we have the identity in the algebra
$(\End\CN)^{\ot\ts3}(\ts u\,,\ns v\ts)$
\begin{equation*}
-\ts R_{\ts13}(u\,,-\ts v)\,P_{\ts12}\,R_{\ts23}(-\ts u\,,\ns v)=
-\ts P_{\ts12}\ns\left(1-\frac{1}{(u+v)^{\ts2}}\right).
\end{equation*}
By applying to it the antiautomorphism $\tau\,\pi$
relative to the first tensor factor of $(\End\CN)^{\ot\ts3}$
and then using \eqref{pt},\eqref{t1} we get
\begin{equation*}
Q_{\ts12}\,R_{\ts13}(u\,,\ns v)\,R_{\ts23}(-\ts u\,,\ns v)=
Q_{\ts12}\ns\left(1-\frac{1}{(u+v)^{\ts2}}\right).
\end{equation*}
Therefore by multiplying the relation
\eqref{3.666666} by $Q_{\ts12}\ot1$ on the left 
and using the above definition of the series $Z(u)$ we obtain
$$
(Q_{12}\ot Z(u))\ts\,T_3(v)\left(1-\frac{1}{(u+v)^{\ts2}}\right)=
T_3(v)\,\ts(Q_{12}\ot Z(u))\left(1-\frac{1}{(u+v)^{\ts2}}\right).
$$
The last relation implies that any generator $T_{ij}^{\ts(r)}$ of 
the algebra $\XpN$ commutes with all coefficients of the series $Z(u)\,$.
\qed
\end{proof}

By the definition of $Z(u)$ we have relations
in $(\End\CN)^{\ot\ts2}\ot\XpN\,[[u^{-1}]]$
\begin{align}
(Q\ot1)\,\ts T_1(u)\,T_2(-\ts u)&=Q\ot Z(u)\,,
\label{QTT}
\\[4pt]
T_2(-\ts u)\,T_1(u)\,(Q\ot1)&=Q\ot Z(u)\,.
\notag
\end{align}
Applying 
$\tau\,\pi\ot\id\ot\id$ to both sides of these relations
and then using \eqref{t1} gives
\begin{align*}
\TA_{\ts1}(u)\,(-\ts P\ot1)\,T_2(-\ts u)&=-\ts P\ot Z(u)\,,
\\[2pt]
T_2(-\ts u)\,(-\ts P\ot1)\,\TA_{\ts1}(u)&=-\ts P\ot Z(u)\,.
\end{align*}
Equivalently,
\vspace{-8pt}
\begin{align*}
\TA_1(u)\,T_1(-\ts u)\,(-\ts P\ot1)&=-\ts P\ot Z(u)\,,
\\[2pt]
(-\ts P\ot1)\,T_1(-\ts u)\,\TA_1(u)&=-\ts P\ot Z(u)\,.
\end{align*}
The latter two can be simpler written as relations in 
$(\End\CN)\ot\XpN\,[[u^{-1}]]$
\begin{align}
\label{TAT}
\TA(u)\,T(-\ts u)&=1\ot Z(u)\,,
\\[2pt]
\notag
T(-\ts u)\,\TA(u)&=1\ot Z(u)\,.
\end{align}
Observe that the last two
are also equivalent to each other due to Proposition~\ref{P3}. Explicitly,
these last two relations can be rewritten respectively as
\begin{align}
\label{Z1}
\sum_k\,
T_{\ts-k\ts,\ts-i}\ts (u)\,T_{\ts kj}(-\ts u)\,
{(-1)}^{\,\bj\ts\bk+\,\bj}
&=\de_{\ts ij}\,Z(u)\,,
\\[2pt]
\label{Z2}
\sum_k\,
T_{ik}(-u)\,T_{\ts-j\ts,\ts-k}\ts(u)\,
{(-1)}^{\,\bi\ts\bk+\,\bk}
&=\de_{\ts ij}\,Z(u)\,.
\end{align}

\begin{proposition}
\label{P4}
The antiautomorphism \eqref{S} 
maps $Z(u)\mapsto Z(u)^{-1}\,$.
\end{proposition}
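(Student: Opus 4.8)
The plan is to apply the antiautomorphism \eqref{S} to the relation \eqref{QTT} and then to combine the outcome with a short central-element manipulation based on Proposition \ref{P3}. Throughout I write $S$ for the antiautomorphism $T_{ij}(u)\mapsto\TP_{ij}(u)$, and I let it act on matrix relations through the linear map $\id\ot\id\ot S$ on $(\End\CN)^{\ot\ts2}\ot\XpN\ts$.

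First I would compute the image of the product $T_1(u)\,T_2(-\ts u)$ under $\id\ot\id\ot S$. Expanding this product as in \eqref{TT} with $v=-\ts u$ and applying $S$ entrywise, the antihomomorphism property \eqref{bb} reverses each pair of factors and generates a sign $(-1)^{(\bi+\bj)(\bk+\bl)}$; this sign meets the identical sign already present in \eqref{TT}, so that the two cancel. Comparing the result with the formula $T_2(v)^{-1}T_1(u)^{-1}=\sum E_{ij}\ot E_{kl}\ot\TP_{kl}(v)\,\TP_{ij}(u)$ recorded in the proof of Proposition \ref{P1}, I get $(\id\ot\id\ot S)(T_1(u)\,T_2(-\ts u))=T_2(-\ts u)^{-1}T_1(u)^{-1}$. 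Since the factor $Q\ot1$ carries the unit of $\XpN$ and therefore passes through $\id\ot\id\ot S$ untouched, applying this map to \eqref{QTT} would yield
\begin{equation*}
(Q\ot1)\,T_2(-\ts u)^{-1}T_1(u)^{-1}=Q\ot S(Z(u))\,.
\end{equation*}

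Next I would run the central-element argument. Multiplying \eqref{QTT} on the right by $T_2(-\ts u)^{-1}T_1(u)^{-1}$ gives $Q\ot1=(Q\ot Z(u))\,T_2(-\ts u)^{-1}T_1(u)^{-1}$. By Proposition \ref{P3} the series $Z(u)$ is central, so $1\ot Z(u)$ is central in $(\End\CN)^{\ot\ts2}\ot\XpN$ and can be moved to the front, giving $Q\ot1=(1\ot Z(u))\,(Q\ot1)\,T_2(-\ts u)^{-1}T_1(u)^{-1}$. Substituting the displayed relation from the previous step converts the right-hand side into $Q\ot Z(u)\,S(Z(u))$. As $Q\neq0$ and the tensor product is taken over $\CC$, this forces $Z(u)\,S(Z(u))=1$, which is exactly the assertion $S(Z(u))=Z(u)^{-1}$ (the inverse series exists because the leading term of $Z(u)$ is $1$).

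The hard part will be the first step: I must verify that $\id\ot\id\ot S$ sends $T_1(u)\,T_2(-\ts u)$ to precisely $T_2(-\ts u)^{-1}T_1(u)^{-1}$ with no residual sign. The delicate point is the $\ZZ_2$-grading bookkeeping, since the sign produced by \eqref{bb} must cancel exactly against the sign in the expansion \eqref{TT}; this works because both equal $(-1)^{(\bi+\bj)(\bk+\bl)}$ and so occur as a square, but it relies on $T_1$ and $T_2$ occupying disjoint matrix tensor factors — the analogous reversal would in fact fail for two copies of $T(u)$ in a single copy of $\End\CN$. Once this matrix identity is secured, everything else is a formal consequence of the centrality of $Z(u)$.
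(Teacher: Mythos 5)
Your proposal is correct and follows essentially the same route as the paper: both apply the antiautomorphism \eqref{S} in the $\XpN$ tensor factor of \eqref{QTT} (using the antihomomorphism property \eqref{bb} together with the expansion of $T_2(v)^{-1}\,T_1(u)^{-1}$ from the proof of Proposition \ref{P1}, where the two signs $(-1)^{(\bi+\bj)(\bk+\bl)}$ indeed cancel), and both combine this with the centrality of $Z(u)$ from Proposition \ref{P3} to identify $S(Z(u))$ with $Z(u)^{-1}$. The only difference is organizational: the paper first rewrites \eqref{QTT} as $Q\ot Z(u)^{-1}=(Q\ot1)\,T_2(-u)^{-1}\,T_1(u)^{-1}$ and then matches this with the $S$-image of the left-hand side, whereas you compute the $S$-image first and then run the central-element substitution.
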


\begin{proof}
Due to Proposition \ref{P3} the relation \eqref{QTT} 
can be rewritten as
$$
Q\ot Z(u)^{-1}=(Q\ot1)\,\ts T_2(-\ts u)^{-1}\,T_1(u)^{-1}\,.
$$
The right hand side of the last displayed relation can also be obtained by
applying to the left hand side of \eqref{QTT} the antiautomorphism \eqref{S}
relative to the tensor factor $\XpN\,$.
Hence Proposition \ref{P4} follows from \eqref{QTT}.
\qed
\end{proof}

\begin{proposition}
\label{P5}
The square of antiautomorphism \eqref{S} of\/ $\XpN$ is given~by 
\begin{equation*}
T_{ij}(u)\mapsto Z(u)^{-1}\ts Z(-\ts u)\ts\,T_{ij}(u)\,.
\end{equation*}
\end{proposition}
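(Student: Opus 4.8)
The plan is to work with the matrix form of the antiautomorphism \eqref{S}. By the definition \eqref{Tui} this map is nothing but $(\id\ot S)(T(u))=T(u)^{-1}$, where $S$ is applied entrywise to the tensor factor $\XpN$. Consequently $(\id\ot S^{\ts2})(T(u))=(\id\ot S)(T(u)^{-1})$, so it suffices to obtain an explicit formula for the entries $\TP_{ij}(u)$ of $T(u)^{-1}$ and then to apply $S$ to them once more. The whole computation thereby reduces to evaluating $S$ on the generators $T_{ij}(u)$ twice, using only the antihomomorphism property \eqref{bb}, Propositions \ref{P3} and \ref{P4}, and the relations \eqref{TAT}.

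First I would solve \eqref{TAT} for the inverse matrix. Since $Z(u)$ has leading term $1$ it is invertible, and the first relation in \eqref{TAT} gives $T(-\ts u)^{-1}=(1\ot Z(u)^{-1})\,\TA(u)$; replacing $u$ by $-\ts u$ yields
\[
T(u)^{-1}=(1\ot Z(-\ts u)^{-1})\,\TA(-\ts u)\,.
\]
Reading off the $(i\ts,j)$ entry by means of the explicit expression for $\TA(u)$ introduced before Proposition \ref{P2}, and using the centrality of $Z(u)$ from Proposition \ref{P3}, this becomes
\[
\TP_{ij}(u)=Z(-\ts u)^{-1}\,T_{\,-j\ts,\ts-i}(-\ts u)\,{(-1)}^{\,\bi\ts\bj\ts+\,\bj}\,,
\]
which is exactly the action of the antiautomorphism \eqref{S} on $T_{ij}(u)$.

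It then remains to apply $S$ to this expression. Using \eqref{bb} together with the fact that every coefficient of $Z(u)$ is central and has even parity, the scalar sign and the central factor may be pulled out, and Proposition \ref{P4} turns $Z(-\ts u)^{-1}$ into $Z(-\ts u)$; one is left with $S^{\ts2}(T_{ij}(u))={(-1)}^{\,\bi\ts\bj\ts+\,\bj}\,S(T_{\,-j\ts,\ts-i}(-\ts u))\,Z(-\ts u)$. Now $S(T_{\,-j\ts,\ts-i}(-\ts u))=\TP_{\,-j\ts,\ts-i}(-\ts u)$, and applying the displayed formula for $\TP$ a second time — with $u$ replaced by $-\ts u$ and the indices $i\ts,j$ replaced by $-j\ts,\ts-i$ — evaluates this entry as $Z(u)^{-1}\ts T_{ij}(u)\,{(-1)}^{\,\bi\ts\bj\ts+\,\bj}$. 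Substituting, collecting the two equal sign factors, which square to $1$, and using centrality of $Z$ once more gives $S^{\ts2}(T_{ij}(u))=Z(u)^{-1}\ts Z(-\ts u)\,T_{ij}(u)$, as required. The one delicate point I expect is the sign and index bookkeeping in this last step: the reflection $i\mapsto-i$ flips the parity $\bi$, so one must check that the exponents of the form $\bar a\ts\bar b+\bar b$ produced by the two passages through $\TA$ really collapse to the single factor ${(-1)}^{\,\bi\ts\bj\ts+\,\bj}$ each time; granting this, everything else commutes freely because $Z(u)$ is central.
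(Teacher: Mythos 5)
Your proof is correct and follows essentially the same route as the paper: both derive the entrywise formula $\TP_{ij}(u)=Z(-\ts u)^{-1}\,T_{\,-j\ts,\ts-i}(-\ts u)\,{(-1)}^{\,\bi\ts\bj\ts+\,\bj}$ from \eqref{TAT}, then apply the antipode once more using Propositions \ref{P3} and \ref{P4} and the same formula with $(i,j,u)$ replaced by $(-j,-i,-u)$. The sign check you flag as delicate does indeed work out, since $(\ts\bj+1)(\ts\bi+1)+(\ts\bi+1)=\bi\ts\bj+\bj$ in $\ZZ_2$, so the two sign factors coincide and cancel.
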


\begin{proof}
By \eqref{TAT}
$$
T(u)^{-1}=Z(-\ts u)^{-1}\,\TA(-\ts u)\,.
$$
Thus for any indices $i$ and $j$ we have the relation
\begin{equation}
\label{TPij}
\TP_{ij}(u)=Z(-\ts u)^{-1}\,
T_{\,-j\ts,\ts-i}\ts(-\ts u)\,
{(-1)}^{\,\bi\ts\bj\ts+\,\bj}\ts.
\end{equation}
By applying the antiautomorphism \eqref{S} to the right hand side of
this relation, and then using the same relation for $-\ts j\ts,-\ts i$ 
and $-\ts u$ instead of $i\,,j$ and $u$ 
we get the series
$$
Z(u)^{-1}\,T_{ij}(u)\,Z(-\ts u)=
Z(u)^{-1}\ts Z(-\ts u)\ts\,T_{ij}(u)\,.
$$
Here we also used Propositions \ref{P3} and \ref{P4}.
\qed
\end{proof}

\newpage

\begin{proposition}
\label{P6}
The automorphism \eqref{T} of\/ $\XpN$ maps $Z(u)\mapsto Z(-\ts u)\,$.
\end{proposition}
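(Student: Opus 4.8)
The plan is to apply the automorphism \eqref{T}, which I denote $\phi$, to the defining relation \eqref{QTT} of $Z(u)$, and to recognise the result as the very same construction carried out with $\TA(u)$ in place of $T(u)$. The first observation is that $(\id\ot\phi)(T(u))=\TA(u)$, which is immediate from the definition \eqref{T} of $\phi$ together with the second expression for $\TA(u)$ displayed just before Proposition \ref{P2}. Since $\phi$ is an automorphism acting trivially on the matrix tensor factors, applying $\id\ot\id\ot\phi$ to \eqref{QTT} turns $T_1(u)\ts T_2(-\ts u)$ into $\TA_1(u)\ts\TA_2(-\ts u)$ and gives
$$(Q\ot1)\,\TA_1(u)\,\TA_2(-\ts u)=Q\ot\phi(Z(u))\,.$$

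Next I use that $\TA(u)$ satisfies the same $RTT$-relation as $T(u)$: this is precisely the relation $(R\ts(u\,,\ns v)\ot1)\,\TA_1(u)\,\TA_2(v)=\TA_2(v)\,\TA_1(u)\,(R\ts(u\,,\ns v)\ot1)$ established in the proof of Proposition \ref{P2}. Multiplying it by $u+v$ and setting $v=-\ts u$, so that only the $Q$-term of $R$ survives, shows that $(Q\ot1)\,\TA_1(u)\,\TA_2(-\ts u)$ equals $Q\ot\widetilde Z(u)$ for a power series $\widetilde Z(u)$ in $u^{-1}$ with leading term $1$, defined from $\TA$ exactly as $Z$ was defined from $T$ in Section \ref{8}. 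Comparing with the previous display yields $\phi(Z(u))=\widetilde Z(u)$, so it remains only to identify $\widetilde Z(u)$ with $Z(-\ts u)$.

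For this I would rerun the computation of Section \ref{8} that produced \eqref{TAT}, now with $\TA$ in the role of $T$. The key point is that $\tau\,\pi$ is involutive, so $(\tau\,\pi\ot\id)\,\TA(u)=T(u)$; applying $\tau\,\pi\ot\id\ot\id$ to the first matrix factor of the relation $(Q\ot1)\,\TA_1(u)\,\TA_2(-\ts u)=Q\ot\widetilde Z(u)$, using $(\tau\,\pi\ot\id)(Q)=-\,P$ from \eqref{pt} together with $P_{\ts12}\,X_2=X_1\,P_{\ts12}$, gives the analogue $T(u)\,\TA(-\ts u)=1\ot\widetilde Z(u)$ of \eqref{TAT}. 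On the other hand, the second relation in \eqref{TAT}, namely $T(-\ts u)\,\TA(u)=1\ot Z(u)$, reads $T(u)\,\TA(-\ts u)=1\ot Z(-\ts u)$ after the substitution $u\mapsto-\ts u$. Hence $\widetilde Z(u)=Z(-\ts u)$, and therefore $\phi(Z(u))=Z(-\ts u)$.

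The only delicate point is keeping track of the order reversal caused by the antiautomorphism $\tau\,\pi$ in the last step; but this is the identical manipulation already carried out in Section \ref{8}, with $\TA$ and $T$ interchanged and with $(\tau\,\pi)^{\ts2}=\id$ accounting for the passage $\TA\mapsto T$, so no genuinely new computation is needed. Centrality of $\widetilde Z(u)$, should one wish to record it, then follows at once from Proposition \ref{P3} via the identity $\widetilde Z(u)=Z(-\ts u)$.
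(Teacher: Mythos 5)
Your proof is correct. It rests on the same underlying fact as the paper's proof --- the automorphism \eqref{T} interchanges $T(u)$ and $\TA(u)$, and hence interchanges the two factorizations $\TA(u)\,T(-\ts u)$ and $T(-\ts u)\,\TA(u)$ of $1\ot Z(u)$ --- but you execute it at the level of matrix relations, whereas the paper's proof is a two-line entry-wise computation: it applies \eqref{T} directly to the explicit formula \eqref{CC1} (the diagonal entry of \eqref{Z1}) and recognizes the result as the formula \eqref{CC2} evaluated at $-\ts u$. Your route buys freedom from sign bookkeeping, at the cost of extra machinery: the relation $(R\ts(u\,,\ns v)\ot1)\,\TA_1(u)\,\TA_2(v)=\TA_2(v)\,\TA_1(u)\,(R\ts(u\,,\ns v)\ot1)$ from the proof of Proposition \ref{P2}, the one-dimensionality argument defining $\widetilde Z(u)$, and a rerun of the $\tau\,\pi$ order-reversal manipulations of Section \ref{8}. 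Note also that your middle steps can be compressed considerably: since the automorphism $\phi$ given by \eqref{T} is even, $\id\ot\phi$ is an algebra endomorphism of $(\End\CN)\ot\XpN\ts[[u^{-1}]]$ which swaps $T(u)\leftrightarrow\TA(u)$; applying it directly to the first relation of \eqref{TAT} gives $T(u)\,\TA(-\ts u)=1\ot\phi(Z(u))$ at once, and comparing with the second relation of \eqref{TAT} after the substitution $u\mapsto-\ts u$ finishes the proof --- the detour through \eqref{QTT}, the auxiliary series $\widetilde Z(u)$, and the re-derivation of the analogue of \eqref{TAT} are not needed.
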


\begin{proof}
By setting $j=i$ in \eqref{Z1} we get an explicit formula
\begin{equation}
\label{CC1}
Z(u)=\sum_k\,
T_{\ts -k\ts,\ts-i}\ts (u)\,T_{\ts ki}(-\ts u)\,
{(-1)}^{\,\bi\ts\bk+\,\bi}\,.
\end{equation}
It follows from \eqref{CC1}
that the automorphism $\eqref{T}$ maps $Z(u)$ to the sum
$$
\sum_k\,
T_{\ts ik}\ts (u)\,T_{\ts-i,\ts-k}(-\ts u)\,
{(-1)}^{\,\bi\ts\bk+\,\bk}\,.
$$
On the other hand, by setting $j=i$ in \eqref{Z2} 
we get another explicit formula
\begin{equation}
\label{CC2}
Z(u)=\sum_k\,
T_{\ts ik}(-\ts u)\,T_{\ts-i\ts,\ts-k}\ts(u)\,
{(-1)}^{\,\bi\ts\bk+\,\bk}\,.
\end{equation}
Comparing the sum in \eqref{CC2} with the
previous display completes our proof.
\qed
\end{proof}


\vspace{-\baselineskip}
\refstepcounter{section}
\subsection*{\bf\thesection}
\label{9}

There is a natural Hopf algebra structure on the extended Yangian $\XpN\,$. 
A coassociative comultiplication homomorphism $\De:\XpN\to\XpN\ot\XpN$ 
is defined by 
\begin{equation}
\label{3.7}
\De:\,T_{ij}(u)\,\mapsto\,\sum_{k}\,
T_{ik}(u)\ot T_{kj}(u)\,
{(-1)}^{\ts(\ts\bi\ts+\,\bk\ts)(\ts\bj\ts+\,\bk\ts)}
\end{equation}
where the tensor product is taken over the subalgebra $\CC[[u^{-1}]]$
of $\XpN\ts[[u^{-1}]]\,$.
The counit homomorphism $\ep:\XpN\to\CC$ is defined 
by $T_{ij}(u)\mapsto\de_{ij}\,$. Further,
the antipodal mapping $\S:\XpN\to\XpN$ is just 
the antiautomorphism \eqref{S}. Justification of these
definitions is very similar to that in the case 
of the Yangian of the general linear Lie algebra $\mathfrak{gl}_{\ts N}\ts$, 
see \cite[Section~4]{N4}. Here we omit the details. 

\begin{proposition}
\label{P7}
For the formal power series $Z(u)$ in $u^{-1}$ we have
\begin{equation}
\label{DC}
\De:\,Z(u)\mapsto Z(u)\ot Z(u)
\quad\text{and}\quad
\ep:\,Z(u)\mapsto1\,.
\end{equation}
\end{proposition}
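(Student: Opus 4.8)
The plan is to derive both formulas in \eqref{DC} by applying the homomorphisms $\De$ and $\ep$ to the matrix relation \eqref{QTT} that defines $Z(u)$, and then cancelling the nonzero factor $Q$. The first ingredient is the matrix form of the comultiplication \eqref{3.7}. In $(\End\CN)\ot\XpN\ot\XpN\ts[[u^{-1}]]$ introduce the two series $T^{[1]}(u)=\sum_{ij}E_{ij}\ot T_{ij}(u)\ot1$ and $T^{[2]}(u)=\sum_{ij}E_{ij}\ot1\ot T_{ij}(u)$, where the superscript $[s]$ records in which copy of $\XpN$ the coefficients sit. A direct multiplication in the $\ZZ_2$-graded triple tensor product, in which moving the second matrix factor past the first $\XpN$-coefficient produces exactly the sign $(-1)^{(\bi+\bk)(\bj+\bk)}$, shows that $(\id\ot\De)(T(u))=T^{[1]}(u)\,T^{[2]}(u)$; this is nothing but \eqref{3.7} rewritten. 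Consequently $(\id^{\ot\ts2}\ot\De)(T_p(u))=T_p^{[1]}(u)\,T_p^{[2]}(u)$ for $p=1,2$.

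Now apply $\id^{\ot\ts2}\ot\De$ to \eqref{QTT}. The right-hand side becomes $Q\ot\De(Z(u))$, while the left-hand side becomes
\[
(Q\ot1\ot1)\,T_1^{[1]}(u)\,T_1^{[2]}(u)\,T_2^{[1]}(-\ts u)\,T_2^{[2]}(-\ts u)\ts.
\]
The key point is that $T_1^{[2]}(u)$ and $T_2^{[1]}(-\ts u)$ have disjoint supports across all four tensor slots and each is even (every summand has total degree $2(\bi+\bj)\equiv0$), so they commute. Swapping them regroups the product as $\big(T_1^{[1]}(u)\,T_2^{[1]}(-\ts u)\big)\big(T_1^{[2]}(u)\,T_2^{[2]}(-\ts u)\big)$, a factor living only in the copy $[1]$ times one living only in $[2]$. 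Applying \eqref{QTT} inside copy $[1]$ turns $(Q\ot1\ot1)\,T_1^{[1]}(u)\,T_2^{[1]}(-\ts u)$ into $Q\ot Z(u)\ot1$; since every coefficient of $Z(u)$ is even, this $Z(u)$ in copy $[1]$ commutes past the remaining $T^{[2]}$-series, and a second application of \eqref{QTT} inside copy $[2]$ yields $Q\ot Z(u)\ot Z(u)$. Comparing the two sides gives $Q\ot\De(Z(u))=Q\ot Z(u)\ot Z(u)$, and as $Q\neq0$ the map $X\mapsto Q\ot X$ is injective, so $\De(Z(u))=Z(u)\ot Z(u)$.

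For the counit I would apply $\id^{\ot\ts2}\ot\ep$ to \eqref{QTT}. Since $\ep$ is an algebra homomorphism with $\ep(T_{ij}(u))=\de_{ij}$, each series $T_p(\pm u)$ is sent to the identity of $(\End\CN)^{\ot\ts2}$, so the left-hand side becomes $Q$ while the right-hand side becomes $\ep(Z(u))\,Q$; cancelling $Q$ gives $\ep(Z(u))=1$. This also drops straight out of the explicit formula \eqref{CC1}, where applying $\ep$ collapses the sum to its $k=i$ term, carrying the sign $(-1)^{\bi\ts\bi+\bi}=1$.

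The only delicate point is the sign bookkeeping: one must check that the product $T^{[1]}(u)\,T^{[2]}(u)$ reproduces exactly the sign in \eqref{3.7}, and that $T_1^{[2]}(u)$, $T_2^{[1]}(-\ts u)$ and the coefficients of $Z(u)$ really commute with no residual sign. Once the parities are verified—each of $Q$, $T^{[s]}(u)$ and $Z(u)$ turns out to be even—the rearrangements are sign-free and the whole argument reduces to using \eqref{QTT} once in each copy of $\XpN$.
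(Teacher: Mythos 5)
Your proof is correct, but it follows a genuinely different route from the paper's. The paper works entirely at the level of components: it applies $\De$ to the explicit formula \eqref{CC1} for $Z(u)$, expands each factor via \eqref{3.7}, and then the crucial collapsing step invokes the relation \eqref{Z1} (with relabelled indices) to reduce the triple sum to $Z(u)\ot Z(u)$; the counit statement is read off from \eqref{CC1} exactly as in your last remark. You instead work at the matrix level: you apply $\id^{\ot2}\ot\De$ to the defining relation \eqref{QTT}, use the factorization of the coproduct as $T^{[1]}(u)\,T^{[2]}(u)$, commute the two series supported on disjoint slots (your justification is sound: disjoint supports give supercommutation and evenness of every summand kills the sign), apply \eqref{QTT} once inside each copy of $\XpN$, and cancel the nonzero factor $Q$. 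This is the classical argument for grouplikeness of the quantum determinant in the Yangian of $\mathfrak{gl}_{\ts N}$ (compare \cite[Section~2]{MNO}), transported to the present setting. What your route buys is conceptual economy: no index gymnastics, no appeal to \eqref{Z1}, and a proof that visibly parallels the non-super case; the cost is that all the weight falls on the graded tensor-product bookkeeping (the homomorphism property of the slot embeddings, the evenness of $Q$, $Z(u)$ and the mixed series), which you correctly flag and which does check out. The paper's route keeps every sign explicit and reuses relations already derived in Section \ref{8}, at the price of a longer component computation. Both proofs rest on the same foundation, namely that $\De$ is an algebra homomorphism and that $Z(u)$ is characterized by \eqref{QTT}, so your argument is a legitimate and complete alternative.
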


\begin{proof}
By using the above formula 
\eqref{CC1} with the index $k$ replaced by $-\ts h\,$,
and then using the definition \eqref{3.7} twice
with $k$ replaced respectively by $-\ts k$ and $l\,$, 
the comultiplication homomorphism $\De$ maps $Z(u)$ to the sum
\begin{gather*}
\sum_{h,k,l}\,
(\,T_{\ts h,\ts -k}(u)\,T_{\ts-h,\ts l}(-\ts u)\ts)\ot
(\,T_{\ts -k,-i}(u)\,T_{\ts li}(-\ts u)\ts)\,
{(-1)}^{\,\bh\ts\bl\ts+\,\bi\ts\bk+\,\bk\ts\bl\ts+\,\bi\ts+\,\bk}
\\
=\ \sum_{k,l}\,\,(\,\de_{\ts kl}\,Z(u)\ts)\ot
(\,T_{\ts-k,-i}(u)\,T_{\ts li}(-\ts u)\ts)\,
{(-1)}^{\,\bi\ts\bk+\,\bi\ts+\,\bk\ts+\,\bl}
\\
=\ \sum_{k}\,\,Z(u)\ot
(\,T_{\ts-k,-i}(u)\,T_{\ts ki}(-\ts u)\ts)\,
{(-1)}^{\,\bi\ts\bk+\,\bi}=\,Z(u)\ot Z(u)
\end{gather*}
as needed. Here we also used the relation \eqref{Z1} with the indices
$k\ts ,l\ts,-\ts h$ instead of $i\ts,j\ts,k$ respectively.
Then we used the above formula \eqref{CC1} in its original form.

Due to the definition of the counit homomorphism $\ep\,$,
the second statement~in \eqref{DC} immediately follows from 
\eqref{CC1}.
\qed
\end{proof}

Note that 
Proposition \ref{P4} also follows from \eqref{DC},
because the multiplication 
$\mu:\XpN\ot\XpN\to\XpN$ and the unit mapping
$\de:\CC\to\XpN:1\mapsto1$ obey 
$$
\mu\,(\ts\S\ot\,\id\ts)\,\De=
\de\,\ts\ep\,.
$$
Indeed, by applying to the coefficients of the series 
$Z(u)$ the homomorphisms at the two
sides of the last displayed 
identity, we get the equality 
$\S(Z(u))\,Z(u)=1\,$.


\vspace{-\baselineskip}
\refstepcounter{section}
\subsection*{\bf\thesection}
\label{10}

The {\it Yangian} $\YpN$ of the Lie superalgebra $\pN$ 
is defined as the quotient of the $\ZZ_{\ts2}$-graded
algebra $\XpN$ by the relations 
\begin{equation*}
Z^{\ts(1)}=Z^{\ts(2)}=\ldots=0\,.
\end{equation*}
Hence $\YpN$ is a complex associative unital 
algebra with the generators \eqref{Tijr}, subject to the relations
\eqref{rttttr} and $Z(u)=1\,$. Here we still use the series \eqref{3.0}.
Explicit formulas for $Z(u)$ in terms of these series are given by
\eqref{CC1} and \eqref{CC2}.
The $\ZZ_{\ts2}$-grading on $\YpN$ is still defined by setting
$\deg T_{ij}^{\ts(r)}=\bi+\bj$ for every $r\,$.

Due Propositions \ref{P4} and \ref{P7}, the Hopf algebra structure
descends from $\XpN$ to the quotient $\YpN\,$. 
Due to Proposition \ref{P5}, the antipodal map on $\YpN$ is involutive.
In terms of the series \eqref{Tijr}, this map is still defined by 
\eqref{S}.

By $\eqref{TPij}$ the antipodal map on 
$\YpN$ coincides with the composition of the two commuting maps on $\YpN$
defined by \eqref{M} and \eqref{T}.
These two maps on $\YpN$ commute by Proposition \ref{P2}.
The map \eqref{M}
on $\XpN$ clearly descends to the quotient $\YpN\,$, whereas
\eqref{T} descends to $\YpN$ due to Proposition~\ref{P6}.

Consider the universal enveloping algebra $\UpN$ of the Lie superalgebra
$\pN\,$. This is an associative algebra generated by the elements 
$f_{\ts ij}\in\pN\,$, see \eqref{fij}.
The algebra $\UpN$ is $\ZZ_{\ts2}$-graded so that
$\deg f_{ij}=\bi+\bj\,$. The defining relations are 
\begin{gather*}
f_{\,-j\ts,\ts-i}=-\,f_{\ts ij}\,{(-1)}^{\,\bi\ts\bj\ts+\,\bj}\ts,
\\
[\,f_{\ts ij}\,,f_{\ts kl}\,]=
\de_{\ts jk}\,f_{\ts il}-
\de_{\ts li}\,f_{\ts kj}\,
{(-1)}^{\ts(\,\bi+\,\bj\,)(\ts\bk+\,\bl\,)}
\\
-\,\de_{-\ts i,\ts k}\,f_{\ts-j,\ts l}\,{(-1)}^{\,\bi\ts\bj\ts+\,\bj}
\!-\de_{\ts j,-l}\,f_{\ts i,\ts -k}\,{(-1)}^{\,\bk\ts\bl\ts+\,\bl}\ts,
\end{gather*}
see \eqref{glN}. 
But now the square brackets stand for the supercommutator in $\UpN\,$.

On the other hand, by setting $Z(u)=1$ in \eqref{Z1} and taking 
the coefficients at $u^{-1}$ we obtain the relations in the algebra $\YpN$
$$
T_{\,-j\ts,\ts-i}^{\ts(1)}=
T_{\ts ij}^{\ts(1)}\,{(-1)}^{\,\bi\ts\bj\ts+\,\bj}\,.
$$
Multiplying \eqref{rttttr} by $u^{\ts2}-v^{\ts2}$ and then
taking the coefficients at $u\,v^{-1}$ yields
\begin{gather*}
[\,T_{\ts ij}^{\ts(1)}\,,T_{\ts kl}^{\ts(1)}\,]=
(\,\de_{\ts kj}\,T_{\ts il}^{\ts(1)}-\de_{\ts il}\,T_{\ts kj}^{\ts(1)}\,)\,
(-1)^{\ts\,\bi\ts\bk\ts+\,\bi\ts\bl\,+\,\bk\ts\bl}
\\
-\,\de_{-\ts i,\ts k}\,T_{\ts-j,\ts l}^{\ts(1)}\,
(-1)^{\ts\,\bi\bl\ts+\,\bj\ts\bl\ts+\,\bi}
\!+\de_{\ts j,-l}\,T_{\ts k,\ts -i}^{\ts(1)}\,
(-1)^{\,\bi\ts\bl\ts+\,\bi}\ts.
\end{gather*}
By comparing the last two displays with the above relations in $\UpN\,$,
we deduce that a homomorphism $\UpN\to\YpN$ can be defined by mapping
\begin{equation}
\label{UY}
-\ts f_{\ts ji}\,{(-1)}^{\,\bj}\mapsto T_{ij}^{\ts(1)}\,.
\end{equation}
Our Theorem \ref{T2} will imply that this homomorphism
is an embedding. Moreover it is  an embedding of Hopf algebras, because
for~$\XpN$ by our definitions 
\begin{equation*}
\De:\,T_{ij}^{\ts(1)}\mapsto T_{ij}^{\ts(1)}\ot1+1\ot T_{ij}^{\ts(1)}\ts,
\ \quad
\ep:\,T_{ij}^{\ts(1)}\mapsto0\,,
\ \quad
\S:\,T_{ij}^{\ts(1)}\mapsto-\,T_{ij}^{\ts(1)}\ts.
\end{equation*}


\refstepcounter{section}
\subsection*{\bf\thesection}
\label{11}

For any formal power series $c\ts(u)$ in $u^{-1}$
with coefficients from $\CC$ and leading term $1\ts$,
the automorphism \eqref{muf} of the associative algebra 
$\XpN$ maps $Z(u)$ to
$$
c\ts(u)\,c\ts(-u)\,Z(u)\,.
$$
So 
\eqref{muf} 
descends to 
$\YpN$ if and only if $c\ts(u)\,c\ts(-u)=1\,$. 
The {\it special Yangian} of $\ts\pN$
is the fixed point subalgebra of $\YpN$ relative
to all these automorphisms. We will denote this subalgebra of $\YpN$
by $\ZpN\,$. It will not be confused with
the subalgebra of $\UpN$
consisting of all elements invariant relative to 
the adoint action of $\pN\,$,
since the latter subalgebra is just $\CC$ by \cite[Proposition~3]{S}.

The special Yangian $\ZpN$ is moreover a Hopf subalgebra of\/ $\YpN\,$.
Indeed, the anipodal map on 
$\YpN$ preserves the subalgebra $\ZpN$ by the definition of this subalgebra.
Now let $c\ts(u)$ by any formal power series in $u^{-1}$
with coefficients from $\CC$ and leading term $1\ts$, satisfying 
the condition $c\ts(u)\,c\ts(-u)=1\,$. 
By \eqref{3.7} the comultiplication on $\YpN$ maps
\begin{gather*}
c\ts(u)\,T_{ij}(u)\,\mapsto\,
\sum_{k}\,
(\ts c\ts(u)\,T_{ik}(u))\ot T_{kj}(u)\,
{(-1)}^{\ts(\ts\bi\ts+\,\bk\ts)(\ts\bj\ts+\,\bk\ts)}=
\\
\sum_{k}\,
T_{ik}(u)\ot(\ts c\ts(u)\, T_{kj}(u))\,
{(-1)}^{\ts(\ts\bi\ts+\,\bk\ts)(\ts\bj\ts+\,\bk\ts)}\,.
\end{gather*}
Hence the comultiplication intertwines the action
of the automorphism \eqref{muf} on $\YpN$ with its action on any one of the 
two tensor factors of $\YpN\ot\YpN\,$.
It follows that the image of the subalgebra $\ZpN$ of $\YpN$
under comultiplication is contained in both 
$\ZpN\ot\YpN$ and $\YpN\ot\ZpN\,$.
Therefore this image is contained in $\ZpN\ot\ZpN$ as needed.

We will conclude this section with an important observation. 
For any $t\in\CC$ consider the representation 
$\rho_{\,t}$ of $\XpN$ defined at the end of Section~\ref{6}.
By using the formula 
\eqref{TAT} and the relations
\eqref{t1},\eqref{rr} 
this representation maps $Z(u)$ to
$$
(\ts\tau\,\pi\ot\id\,)(R\ts(u\,,t\ts))\,R\ts(-\ts u\,,t\ts)=
R\ts(u\,,-\,t\ts)\,R\ts(-\ts u\,,t\ts)=
1-\frac{1}{(u+t\ts)^{\ts2}}\,.
$$
So the composition of $\rho_{\,t}$ with the automorphism 
\eqref{muf} of $\XpN$ maps $Z(u)$~to
$$
c\ts(u)\,c\ts(-u)
\left(1-\frac{1}{(u+t\ts)^{\ts2}}\right)
$$
which may be equal to $1$ for some $c\ts(u)$ if and only if $t=0\,$.
So the composition of $\rho_{\,t}$ with \eqref{muf}
may factor to a representation of 
$\YpN$ if and only if $t=0\,$.
The choice of $c\ts(u)$ for that purpose is not unique then.
For example, we~can~choose 
$$
c\ts(u)=\frac{u}{u+1} 
\quad\text{or}\quad
c\ts(u)=\frac{u}{u-1}\,. 
$$

However, the composition of the tensor product 
$\rho_{\,t}\ot\rho_{\ts-\ts t}$ of representations of $\XpN$ with an automorphism \eqref{muf} may factor to a representation of 
$\YpN$ for any $t\in\CC\,$.
This is because due to \eqref{DC}, the latter composition maps $Z(u)$~to
$$
c\ts(u)\,c\ts(-u)
\left(1-\frac{1}{(u+t\ts)^{\ts2}}\right)
\left(1-\frac{1}{(u-t\ts)^{\ts2}}\right).
$$
The choice of $c\ts(u)$ for $\rho_{\,t}\ot\rho_{\ts-\ts t}$
is still not unique. For example, we can choose
$$
c\ts(u)=\frac{(u+t\ts)^{\ts2}}{(u+t\ts)^{\ts2}-1} 
\quad\text{or}\quad
c\ts(u)=\frac{(u-t\ts)^{\ts2}}{(u-t\ts)^{\ts2}-1}\,. 
$$


\refstepcounter{section}
\subsection*{\bf\thesection}
\label{12}

There is a natural ascending $\ZZ\ts$-filtration 
on the associative algebra $\YpN\,$. It is defined by setting 
the degree of $T^{\ts(r)}_{ij}$ to $r$ 
for each $r\ge1$ and all indices $i\,,j\,$.
Let $\gr\YpN$ be the $\ZZ\ts$-graded algebra corresponding to this
filtration. Denote by $X^{\ts(r)}_{ij}$ the image of the generator
$T^{\ts(r)}_{ij}$ in the degree $r$ component of $\gr\YpN\,$.

The $\ZZ_2$-grading on $\YpN$ descends to $\gr\YpN$ so that
$\deg X^{\ts(r)}_{ij}=\bi+\ts\bj\,$.
For any $r,s\ge1$ by taking the coefficients at $u^{\ts-r}\ts v^{\ts-s}$ in 
\eqref{rttttr} we immediately obtain the supercommutation relation in the
$\ZZ_2$-graded algebra $\gr\YpN$
$$
[\ts\,X^{\ts(r)}_{\ts ij},\ts X^{\ts(s)}_{\ts kl}\,]=0\,.
$$
For any indices $i\,,j$ introduce the series with coefficients in 
the algebra $\gr\YpN$
\begin{equation*}
X_{\ts ij}(u)=
\de_{ij}\cdot1+
X_{\ts ij}^{\ts(1)}\ts u^{-1}+X_{\ts ij}^{\ts(2)}\ts u^{-2}+\ldots\,.
\end{equation*}
Then the relations \eqref{Z1} with $Z(u)=1$ imply 
that for all $i\,,j$ we have
\begin{equation}
\label{X1}
\sum_k\,
X_{\ts-k\ts,\ts-i\ts}\ts (u)\,X_{\ts kj\ts}(-\ts u)\,
{(-1)}^{\,\bj\ts\bk+\,\bj}=\de_{\ts ij}\,.
\end{equation}

The numerator in the first line of the defining 
relation \eqref{rttttr} of $\XpN$ vanishes at $v=u\,$.
Further, if we multiply by $u+v$ the expressions
in the second and in the third line of \eqref{rttttr}
and then set $v=-\,u\,$, we will get
$$
-\,\de_{\ts -i,k}\,\de_{\ts j,-l}\,Z(u)\,{(-1)}^{\,\bi\ts\bj}
\quad\text{and}\quad
\de_{\ts j,-l}\,\de_{\ts -i,k}\,Z(u)\,\,{(-1)}^{\,\bi\ts\bl+\,\bi}
$$ 
by using \eqref{Z1} and \eqref{Z2} respectively.
The latter two expressions cancel each other.
So by using \eqref{Z1} and \eqref{Z2} the defining relation
\eqref{rttttr} can be rewritten
is an equality of formal power series in $u^{-1},v^{-1}$
with coefficients in $\XpN\,$.




\vspace{-\baselineskip}
\refstepcounter{section}
\subsection*{\bf\thesection}
\label{13}

We will also employ another ascending $\ZZ\ts$-filtration on $\YpN\,$.
It is defined by setting 
the degree of $T^{\ts(r)}_{ij}$ to $r-1$ 
for any $r\ge1\,$.
The corresponding $\ZZ\ts$-graded algebra
will be denoted by $\grp\YpN\,$.
Let $Y^{\ts(r)}_{ij}$ be the image of
$T^{\ts(r)}_{ij}$ in the degree $r-1$~component of~$\grp\YpN\,$.

The $\ZZ_2$-grading on $\YpN$ descends to $\grp\YpN$ so that
$\deg Y^{\ts(r)}_{ij}=\bi+\ts\bj\,$.
For any $r\ge1$ by equating to $\de_{\ts ij}$
the left hand side of \eqref{Z1} 
and then taking the coefficients at $u^{\ts-r}$ 
we obtain the relation in the algebra $\grp\YpN$
\begin{equation}
\label{Y}
Y_{\ts-j\ts,\ts-i}^{\ts(r)}=-\,Y_{\ts ij}^{\ts(r)}\,
{(-1)}^{\,\bi\ts\bj+\,\bj\ts+\,r}\,.
\end{equation}

For $r\ts,\ns s\ge1$ by taking coefficients 
at $u^{\ts-r}\ts v^{\ts-s}$
in \eqref{rttttr} we obtain the relation 
\begin{gather}
[\,\,Y^{\ts(r)}_{\ts ij},\ts Y^{\ts(s)}_{\ts kl}\,]\,=
(\,\de_{\ts kj}\,Y^{\ts(r+s-1)}_{\ts il}-
\de_{\ts il}\,Y^{\ts(r+s-1)}_{\ts kj}\,)\,
(-1)^{\ts\,\bi\ts\bk\ts+\,\bi\ts\bl\,+\,\bk\ts\bl}\,+
\notag
\\
\label{YY}
\de_{-\ts i,\ts k}\,Y_{\ts-j,\ts l}^{\ts(r+s-1)}\,
(-1)^{\ts\,\bi\bl\ts+\,\bj\ts\bl\ts+\,\bi\ts+\,r}
-\,\de_{\ts j,-l}\,Y_{\ts k,\ts -i}^{\ts(r+s-1)}
\,{(-1)}^{\,\bi\ts\bl\ts+\,\bi\ts+\,r}\,.
\end{gather}
Here we used the equality to $1$ of the coefficient  
at $u^{\ts-r}\ts v^{\ts-s}$ in the expansion of 
$$
\frac{v^{\,1-r-s}-u^{\ts1-r-s}}{u-v}
$$
as a polynomial in $u^{-1},v^{-1}\,$. We also used the relation
\eqref{Y} obtained just above.

\newpage

The structure of $\ZZ_2$-graded Hopf algebra
on $\YpN$ descends to $\grp\YpN\,$. Due to the definition \eqref{3.7} 
the comultiplication, 
the counit homomorphism and the 
antipodal antiautomorphism for $\grp\YpN$
are defined respectively by 
\begin{equation}
\label{gH}
Y_{ij}^{\ts(r)}\mapsto Y_{ij}^{\ts(r)}\ot1+1\ot Y_{ij}^{\ts(r)}\ts,
\ \quad
Y_{ij}^{\ts(r)}\mapsto0\,,
\ \quad
Y_{ij}^{\ts(r)}\mapsto-\,Y_{ij}^{\ts(r)}\ts.
\end{equation}


\vspace{-\baselineskip}
\refstepcounter{section}
\subsection*{\bf\thesection}
\label{14}

Now consider the twisted polynomial current Lie superalgebra $\g$ as
defined in Section \ref{1}. This subalgebra of $\glN[u]$ is spanned
by the elements
\begin{equation*}
\label{tij}
g^{\ts(r)}_{\ts ij}=e_{\ts ij}\,u^r\ns+\ts\om\ts(e_{\ts ij})\ts(-\ts u)^r=
e_{\ts ij}\,u^r\ns-e_{\ts-j\ts,\ts-i}\,u^r
{(-1)}^{\,\bi\ts\bj\ts+\,\bj\ts+\,r}
\end{equation*}
where $r=0,1,2,\,\ldots$ and $i\,,j=\pm\,1\lc\!\pm N\,$.
Here $g^{\ts(0)}_{\ts ij}=f_{\ts ij}$ by \eqref{fij}. For $r,s\ge0$
\begin{gather}
\label{t}
g^{\ts(r)}_{\ts-j\ts,\ts-i\ts}=-\ts\,g^{\ts(r)}_{\ts ij}
{(-1)}^{\,\bi\ts\bj\ts+\,\bj\ts+\,r}\,,
\\
\notag
[\,g_{\ts ij}^{\ts(r)}\,,g_{\ts kl}^{\ts(s)}\,]=
\de_{\ts jk}\,g_{\ts il}^{\ts(r+s)}-
\de_{\ts li}\,g_{\ts kj}^{\ts(r+s)}\,
{(-1)}^{\ts(\,\bi+\,\bj\,)(\ts\bk+\,\bl\,)}
\\
\label{tt}
-\,\de_{-\ts i,\ts k}\,g_{\ts-j,\ts l}^{\ts(r+s)}\,
{(-1)}^{\,\bi\ts\bj\ts+\,\bj\ts+\,r}
\!-\de_{\ts j,-l}\,g_{\ts i,\ts -k}^{\ts(r+s)}\,
{(-1)}^{\,\bk\ts\bl\ts+\,\bl\ts+\,s}\,.
\end{gather}

The universal enveloping algebra $\Ug$
is generated by the elements $g^{\ts(r)}_{\ts ij}$
as~an associative algebra. It is $\ZZ_2$-graded so that
the degrees of these generators are equal to $\bi+\ts\bj$ respectively.
The defining relations for these generators are \eqref{t} and
\eqref{tt} where the square brackets now denote the supercommutator.

By comparing \eqref{t},\eqref{tt} with 
\eqref{Y},\eqref{YY} above we deduce that the mapping
\begin{equation}
\label{get}
-\,g^{\ts(r)}_{\ts ji}\,(-1)^{\,\bj}
\mapsto
Y_{ij}^{\ts(r+1)}
\end{equation}
for $r\ge0$
defines a homomorphism $\Ug\to\grp\YpN$
of $\ZZ_2$-graded associative algebras. 
Moreover, this is a Hopf algebra homomorphism by
the definitions \eqref{gH}.

Note that for any\/ $t\in\CC$ the image of $T_{ij}^{\ts(r+1)}\in\YpN$
under the representation $\rho_{\,t}$
coincides with the image of the left hand side of \eqref{get}
under the {\it evaluation representation\/}
$
\operatorname{U}(\ts\glN[u]\ts)\mapsto\End\CN
$
defined by mapping $e_{\ts ij}\,u^r\mapsto E_{\ts ij}\,t^{\,r}\,$. 


\begin{theorem}
\label{T2}
The mapping \eqref{get} defines an isomorphism 
$\Ug\to\grp\YpN$
of $\ZZ_2$-graded Hopf algebras. 
\end{theorem}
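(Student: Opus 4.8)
The plan is to deduce that the surjective homomorphism \eqref{get} is injective by matching Poincar\'e\ts-Birkhoff\ts-\ns Witt bases on the two sides. Surjectivity is immediate, since by its very definition $\grp\YpN$ is generated by the elements $Y^{\ts(r)}_{ij}$ with $r\ge1\ts$, and each of these is the image of $g^{\ts(r-1)}_{ij}$ under \eqref{get}\ts. For injectivity I would first use the relation \eqref{t} to select a linearly independent spanning set of the Lie superalgebra $\g\ts$; by the Poincar\'e\ts-Birkhoff\ts-\ns Witt theorem for $\Ug$ from \cite{MM} the ordered monomials in this set form a basis of $\Ug\ts$. Their images under \eqref{get} are the ordered monomials in the corresponding generators $Y^{\ts(r)}_{ij}\ts$, and by surjectivity these span $\grp\YpN\ts$. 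It therefore suffices to prove that they are linearly independent there.

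To obtain this linear independence I would establish a Poincar\'e\ts-Birkhoff\ts-\ns Witt basis for $\YpN$ itself by the Diamond Lemma of \cite{B} and then take leading symbols. Fix a total order on the generators $T^{\ts(r)}_{ij}$ of $\YpN\ts$, and equip monomials with a well-ordering refining the $\ZZ\ts$-filtration degree of Section \ref{13} (the sum of the numbers $r-1$), breaking ties within each degree by length and by the number of inversions relative to the chosen generator order. The rewriting of the defining relation \eqref{rttttr} described in Section \ref{12} then provides, for each out-of-order product $T^{\ts(r)}_{ij}\,T^{\ts(s)}_{kl}\ts$, a reduction rule that replaces it by the corresponding ordered product, a linear combination of single generators of the same filtration degree $r+s-2\ts$, and quadratic correction terms of strictly smaller degree; this strictly lowers the complexity, so the reduction terminates. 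The relations imposed by $Z(u)=1\ts$, read off from \eqref{CC1} and \eqref{CC2}\ts, are adjoined as further reductions\ts: their leading parts are precisely the relation \eqref{Y}\ts, which eliminates the redundant half of the generators, while their lower-order parts are absorbed into the corrections.

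The heart of the matter, and the main obstacle, is to check that every ambiguity of this reduction system resolves. The overlap ambiguities arise from a triple product $T^{\ts(r)}_{ij}\,T^{\ts(s)}_{kl}\,T^{\ts(q)}_{mn}$ in which both adjacent pairs are out of order, so that one may reduce either the left pair or the right pair first\ts; that the two reductions agree modulo lower terms is exactly the coherence encoded by the Yang-Baxter equation \eqref{rrr} for $R\ts(u\,,\ns v)\ts$, which is equivalent to the consistency of the defining relation \eqref{3.3}\ts. One must further confirm that the reductions coming from $Z(u)=1$ create no new unresolved ambiguities when combined with the reordering rules\ts; this is ensured by the centrality of the coefficients of $Z(u)$, Proposition \ref{P3}\ts, so that these reductions commute past the reordering rules. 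Granting that all ambiguities resolve, the Diamond Lemma yields that the ordered monomials in the reduced set of generators form a basis of $\YpN\ts$. Passing to the associated graded algebra of Section \ref{13}\ts, their leading symbols are the ordered monomials in the $Y^{\ts(r)}_{ij}\ts$, giving a basis of $\grp\YpN$ that coincides with the image of the Poincar\'e\ts-Birkhoff\ts-\ns Witt basis of $\Ug$ fixed at the outset. Since \eqref{get} is already known to be a homomorphism of $\ZZ_2$-graded Hopf algebras and now carries a basis to a basis, it is an isomorphism of such.
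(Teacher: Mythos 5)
Your overall strategy coincides with the paper's: Bergman's Diamond Lemma applied to the presentation of $\YpN$, with two families of reductions (reordering misordered pairs via \eqref{rttttr}, and eliminating a redundant half of the generators via the relations coming from $Z(u)=1$), an ordering refining the filtration of Section \ref{13}, and the Yang--Baxter equation \eqref{rrr} to resolve the overlap ambiguities. But at the step you yourself call the heart of the matter, your justification fails. The ambiguities mixing the two kinds of reductions (the paper's \emph{inclusion ambiguities}: a misordered product $T^{\ts(r+1)}_{ij}\,T^{\ts(s+1)}_{kl}$ in which one or both factors is itself subject to a $Z$-type reduction) are not resolved by ``centrality of the coefficients of $Z(u)$''. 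Proposition \ref{P3} concerns $\XpN$, and in $\YpN$ the coefficients of $Z(u)$ are scalars, so centrality carries no information; confluence here is really a statement about how the series $S(u)$ --- the inverse of $\TA(-\ts u)$, whose entries furnish the elimination rule \eqref{red1} --- exchanges with $T(v)$ under the $R$-matrix. The paper proves exactly this: it derives the exchange identities \eqref{3.33}, \eqref{3.333} and their analogue for $S_1(u)\,S_2(v)$ from \eqref{3.3} by applying $\tau\,\pi$ in suitable tensor factors and using \eqref{t1}, \eqref{t2}, \eqref{rr} and \eqref{tata}, and then compares them with \eqref{3.3} to conclude that the two ways of reducing such a product agree modulo $\prec$-smaller terms. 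Nothing in your proposal supplies these identities; ``the reductions commute past the reordering rules'' is precisely what must be, and is not trivially, proved.

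A second omission: for the Diamond Lemma to give linear independence \emph{in} $\YpN$, rather than in some algebra surjecting onto it, the relations actually used as reductions must generate the whole defining ideal. Your system uses \eqref{rttttr} only for misordered pairs and uses $Z(u)=1$ only through the elimination rules, so one must check that these imply all the remaining defining relations. The paper does this in two separate arguments: the relations \eqref{red2} with $(i,j,r)\ge(k,l,s)$ imply all the others (via \eqref{64}, \eqref{prp} and an induction on the filtration of Section \ref{12}), and the full relation $Z(u)=1$ is recovered from \eqref{3.3} together with the relations $S^{\ts(r+1)}_{11}=T^{\ts(r+1)}_{11}$, which is why one may assume $(1,1,r)\notin\JC$. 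Note also that the relations you propose to read off from \eqref{CC1} and \eqref{CC2} are only the diagonal entries of $\TA(u)\,T(-\ts u)=1\ot Z(u)$; eliminating the generators $T^{\ts(r)}_{ij}$ with $i\neq-\ts j$ requires the off-diagonal relations \eqref{Z1}, i.e.\ the full matrix identity $S(u)=T(u)$. These points are fixable, but as written the decisive confluence step is left unproved.
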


\begin{proof}
The homomorphism $\Ug\to\grp\YpN$ defined
by \eqref{get} above is clearly surjective. We have to prove that 
this homomorphism is injective as well.

Let $\IC$ be the set of all triples $(i,j,r)$ with $r\ge0\,$.
Choose any total ordering $<$ on this set. Also choose any
subset $\JC\subset\IC$ such that for $i\neq-\ts j$
it contains~only one of the triples $(i,j,r)$ and $(-\ts j\ts,-\ts i\ts,r)\,$,
while for $i=-\ts j$ it contains $(i,j,r)$
if and only if $\,\bi\ns+\ts r$ is odd.
Due to \eqref{t} the left hand sides of \eqref{get} 
corresponding to the triples $(i,j,r)\in\JC$ form a basis of the vector space 
$\g\,$. 

Let $\TC$ be the set of all 
products of 
$T_{ij}^{\ts(r+1)}$ 
corresponding to $(i,j,r)\in\JC$ taken in non-decreasing orders,
such that two adjacent triples can be the same $(i,j,r)$ only if 
$\,\bi+\ts\bj=0$ in $\ZZ_2\,$.
We will prove that 
$\TC$ is linearly independent~in~$\YpN\,$.
This is equivalent to the linear independence in $\grp\YpN$
of the set of products obtained from $\TC$ by replacing
every factor $T_{ij}^{\ts(r+1)}$ by the corresponding $Y_{ij}^{\ts(r+1)}\,$. 
Hence we will prove
the injectivity of the homomorphism defined by \eqref{get}.

Let $\SC$ be the set of all 
products of the generators $T_{ij}^{\ts(r+1)}$ of the algebra $\YpN\,$.
Let us call a pair of triples $(i,j,r)$ and $(k,l,s)$
\emph{misordered} if $(i,j,r)>(k,l,s)\,$, or
$(i,j,r)=(k,l,s)$ and $\,\bi+\ts\bj=1$ in $\ZZ_2\,$.
For any product $A\in\SC$ let $\mu(A)$ be 
the number of pairs of factors of $A$
corresponding to misordered pairs of triples.
The pairs of factors of $A$ counted by $\mu\ts(A)$
need not to be adjacent. Let $\nu\ts(A)$ be the number of factors of $A$
corresponding to the triples which are not in $\JC\,$.

Let $A$ and $B$ be any two products from the set $\SC\,$.
Let $a$ and $b$ be the numbers of their factors respectively. 
Let $c$ and $d$ be respectively the sums of the degrees of their factors
relative to the $\ZZ\ts$-filtration on $\YpN$ from Section \ref{13}.
We will write $A\prec B$ if $c<d\,$, 
or $c=d$ but $a<b\,$, 
or $c=d$ and $a=b$ but $\nu\ts(A)<\nu\ts(B)\,$,
or $A$ is a permutation of factors of $B$ 
and $\mu\ts(A)<\mu\ts(B)\,$.
The relation denoted by 
$\prec$ is a partial ordering on the set $\SC\ts$, with every descending chain
terminating. 
Notice that the condition $A\prec B$ implies that $XA\,Y\prec XB\,Y$ for any
$X,Y\in\SC\,$.
 
Let $S(u)$ be the element of the algebra $(\End\CN)\ot\YpN\,[[u^{-1}]]$
inverse to $\TA(-\ts u)\,$. When determining the inverse element here
we do not use any relations in $\YpN\,$.
The relation \eqref{TAT} with $Z(u)=1$
can be now written as $S(u)=T(u)\,$. 
For any $i,j$ define a formal power series $S_{ij}(u)$ in $u^{-1}$
with coefficients in $\YpN\,$,
\begin{equation*}
S(u)=\sum_{i,j}\,E_{\ts ij}\ot S_{ij}(u)\,.
\vspace{-4pt}
\end{equation*}
Define the elements $S_{ij}^{\ts(1)},S_{ij}^{\ts(2)},\,\ldots\in\YpN$
as the coefficients of the series $S_{ij}(u)$ at $u^{-1},u^{-2},\,\ldots$
respectively. For any $r\ge0$
\begin{equation}
\label{red1}
S^{\ts(r+1)}_{\ts ij}=T_{\ts-j\ts,\ts-i}^{\ts(r+1)}\,
{(-1)}^{\,\bi\ts\bj+\,\bj\ts+\,r}+C
\end{equation}
where $C$ is a linear combination of
products of two or more generators of $\YpN$ with the sums of
the degrees of factors by the $\ZZ\ts$-filtration from Section \ref{13}
being less than $r\,$. These products occur in the linear combination 
$C$ only if $r>0\,$. 

Take any $(i,j,r)\notin\JC\,$. If $i\neq-\ts j\,$, then
$(-\ts j\ts,-\ts i\ts,r)\in\JC\,$. 
In this case we will replace any factor $T^{\ts(r+1)}_{\ts ij}$ of 
$B$ by the right hand side of \eqref{red1}.
But if $i=-\ts j$ and $\,\bi\ns+\ts r$ is even, then 
we will replace 
$T^{\ts(r+1)}_{\ts ij}$ by $C/2\,$.
We will call either replacement a \emph{reduction\/} of $B\,$.
It does not change the value of $B\in\YpN$
and is {compatible} with $\prec$ on $\SC\,$, 
so that its result is a linear combination
of products~$A\prec B\,$. 

For any triples $(i,j,r)$ and $(k,l,s)$ the relation \eqref{rttttr}
gives an equality in $\YpN$
\begin{equation}
\label{red2}
T^{\ts(r+1)}_{\ts ij}\,T^{\ts(s+1)}_{\ts kl}=
T^{\ts(s+1)}_{\ts kl}\,T^{\ts(r+1)}_{\ts ij}\,
{(-1)}^{\ts(\,\bi+\,\bj\,)(\ts\bk+\,\bl\,)}+D
\end{equation}
where $D$ is a linear combination of single generators 
of $\YpN$ of degree $r+s$ by the $\ZZ\ts$-filtration from Section \ref{13},
and of products of two generators with the sums of the
degrees of factors being equal to $r+s-1\,$.
Here we use the expansions
$$
\frac1{u-v}\,=u^{-\ts1}+u^{-\ts2}\,v\,+\,\ldots
\quad\text{and}\quad
\frac1{u+v}\,=u^{-\ts1}-u^{-\ts2}\,v\,+\,\ldots
$$
then equate the coefficients at $u^{-\ts r-1}\ts v^{-s-1}$
of the series at the two sides of~\eqref{rttttr}. 

If $(i,j,r)>(k,l,s)$ then we will replace any product
of the corresponding~two adjacent factors of $B$
by the right hand side of \eqref{red2}.
But if $(i,j,r)=(k,l,s)$ and $\,\bi+\ts\bj=1$ in $\ZZ_2$ then
we will replace the product of the two factors of $B$ by $D/2\,$.
The value of $B\in\YpN$ will not change then.
Either replacement will be called a \emph{reduction\/} of $B$ too.
It is {compatible} with the ordering $\prec$ on 
$\SC$ as well. 

\newpage

When proving linear the independence of the set $\TC$ in $\YpN\,$,
we may assume that $(1,1,r)\notin\JC$ for every 
$r\ge0\,$. The collection of relations 
$S^{\ts(r+1)}_{\ts 11}=T^{\ts(r+1)}_{\ts 11}$ 
can be written as the equality $S_{11}(u)=T_{11}(u)\,$.
Using only the relation \eqref{TAT} in 
$(\End\CN)\ot\YpN\ts[[u^{-1}]]\,$, 
this equality gives $Z(u)=1\,$. But the relation 
\eqref{TAT} follows from 
\eqref{3.3}, 
see Section~\ref{8}. 
So the relation $Z(u)=1$ 
follows from \eqref{3.3} and from the relations 
$S^{\ts(r+1)}_{\ts 11}=T^{\ts(r+1)}_{\ts 11}$ for every $r\ge0\,$.
Under our assumption on the choice of $\JC\ts$,
the latter relations are used to define
reductions via \eqref{red1}.

Let now us multiply each side of the relation \eqref{3.3}
by $P\ot1$ on the left and on the right. Also exchange the variables
$u$ and $v\,$. The result is the relation
\begin{equation}
\label{64}
(R\ts(-\ts u\,,-\ts v)\ot1)\,\ts T_2(v)\,T_1(u)=
T_1(u)\,T_2(v)\,(R\ts(-\ts u\,,-\ts v)\ot1)
\end{equation}
since by \eqref{pq}
\begin{equation}
\label{prp}
P\,R\ts(v\,,\ns u)\ot1)\,P=R\ts(-\ts u\,,-\ts v)\,.
\end{equation}
But \eqref{64} can also be obtained by multiplying each side
of \eqref{3.3} on the left and~on the right
by $R\ts(-\ts u\,,-\ts v)\ot1$ and then using \eqref{rr}. 
Inductively, this remark shows that the relations \eqref{red2} with 
$(i,j,r)\ge(k,l,s)$ imply all other relations~\eqref{red2}.
In the induction argument, we use the $\ZZ\ts$-filtration on $\YpN$
from Section \ref{12}.

If $(i,j)=(k,l)$ and $\,\bi+\ts\bj=0$ in $\ZZ_2\,$,
then the relation \eqref{rttttr} 
simply means that the coefficients of the series $T_{ij}(u)$ pairwise commute.
Indeed, in this case the expressions in the second and the third lines
of \eqref{rttttr} vanish. Then \eqref{rttttr} becomes
$$
T_{ij}(u)\,T_{ij}(v)-T_{ij}(v)\,T_{ij}(u)=
\frac{T_{ij}(u)\,T_{ij}(v)-T_{ij}(v)\,T_{ij}(u)}{u-v}\,.
$$
Equivalently,
$$
T_{ij}(u)\,T_{ij}(v)=T_{ij}(v)\,T_{ij}(u)\,.
\hspace{2pt}
$$

In particular, we can eliminate from the definition of
$\YpN$ the relations \eqref{red2} with
$(i,j,r)=(k,l,s)$ and $\,\bi+\ts\bj=0$ in $\ZZ_2$
as tautological relations.
But to define reductions via \eqref{red2},
we used \eqref{red2} 
only when $(i,j,r)>(k,l,s)$ or
$(i,j,r)=(k,l,s)$ and $\,\bi+\ts\bj=1$~in~$\ZZ_2\,$.
The above arguments show that the used 
relations imply all other relations \eqref{red2}. 
Thus our reductions involve all defining relations of $\YpN\,$.

Let us now examine the \emph{inclusion ambiguities\/} of our set of
reductions. They correspond to misordered pairs of triples
where one or both triples are not in $\JC\ts$.
Let us show that they are resolvable relative to $\prec$
in the sense of \cite[Section~1]{B}.  

Take any product $T^{\ts(r+1)}_{\ts ij}\,T^{\ts(s+1)}_{\ts kl}$
where the pair of triples $(i,j,r)$ and $(k,l,s)$~is misordered.
Suppose that $(i,j,r)\notin\JC$ but $(k,l,s)\in\JC\,$.
Then we can apply to the above product
the reduction via \eqref{red1} in the first factor,
or alternatively the reduction via \eqref{red2}. 
The two results coincide by comparing \eqref{3.3} to the equality
\begin{equation}
\label{3.33}
(R\ts(u\,,\ns v)\ot1)\,\ts S_1(u)\,T_2(v)=
T_2(v)\,S_1(u)\,(R\ts(u\,,\ns v)\ot1)\,.
\end{equation}
Note that to prove the coincidence here we use
equalities of linear combinations of products
from $\SC$ which precede $T^{\ts(r+1)}_{\ts ij}\,T^{\ts(s+1)}_{\ts kl}$
relative to the partial ordering. 

To get the equality \eqref{3.33} we apply the map 
$\tau\,\pi\ot\id\ot\id$ to both sides of \eqref{3.3} and
then use \eqref{t1}. This yields the equality
$$
\TA_1(u)\,
(\ts R\ts(u\,,-\ts v)\ot1\ts)\,T_2(v)=
T_2(v)\,(\ts R\ts(u\,,-\ts v)\ot1\ts)\,\TA_1(u)\,.
$$
By multiplying both sides of this equality by
$\TA_1(u)^{-1}$ on the left and on the~right,
changing $u$ to $-\ts u$ and then using \eqref{rr} 
we indeed get \eqref{3.33}.

\newpage

The case when $(i,j,r)\in\JC$ but $(k,l,s)\notin\JC$ can be treated 
in a similar way, but using instead of \eqref{3.33} the equality 
\begin{equation}
\label{3.333}
(R\ts(u\,,\ns v)\ot1)\,\ts T_1(u)\,S_2(v)=
S_2(v)\,T_1(u)\,(R\ts(u\,,\ns v)\ot1)\,.
\end{equation}
To get this equality
multiply each side of \eqref{3.3} by $R\ts(-\ts u\,,-\ts v)\ot1$
on the left and on the right, and then use \eqref{rr}. 
Apply the map $\id\ot\tau\,\pi\ot\id$ to both sides of the resulting 
equality and then use \eqref{t2}. In this way we obtain the equality 
$$
T_1(u)\,(\ts R\ts(u\,,-\ts v)\ot1\ts)\,\TA_2(v)=
\TA_2(v)\,(\ts R\ts(u\,,-\ts v)\ot1\ts)\,T_1(u)\,.
$$
Multiplying its both sides of this equality 
on the left and on the right by
$\TA_2(v)^{-1}$ and then changing $v$ to $-\ts v$ 
yields \eqref{3.333}.

The case when $(i,j,r)\notin\JC$ and $(k,l,s)\notin\JC$
can also be treated in a similar way, but using instead of \eqref{3.33}
the equality 
$$
(R\ts(u\,,\ns v)\ot1)\,\ts S_1(u)\,S_2(v)=
S_2(v)\,S_1(u)\,(R\ts(u\,,\ns v)\ot1)\,.
$$
To get this equality, multiply both sides of \eqref{tata}
on the left and on the right 
first by $\TA_1(u)^{-1}$ and then by $\TA_2(v)^{-1}\,$.
Then change $u\,,v$ to $-\ts u\,,\ns-\ts v$ respectively. 
 
Finally, let us examine the \emph{overlap ambiguities\/} of our set of
reductions. They correspond to the
sequences of triples of length three,
such that the pair of first two triples and
the pair of the last two triples in the sequence are misordered.
Let us show that they are also resolvable relative to $\prec$
in the sense of \cite[Section~1]{B}.  

Take any product 
$T^{\ts(r+1)}_{\ts ij}\,T^{\ts(s+1)}_{\ts kl}\,T^{\ts(t+1)}_{\ts gh}$
such that the pair of triples 
$(i,j,r)$ and $(k,l,s)$ is misordered,
and so is the pair of triples 
$(k,l,s)$ and $(g,h,t\ts)\,$. 
We can apply to the product the reduction via \eqref{red2}
in the first two factors. Alternatively, in the last two factors
we can apply the reduction via the equality 
obtained by replacing $(i,j,r)$ and $(k,l,s)$ in \eqref{red2}
respectively by $(k,l,s)$ and $(g,h,t\ts)\,$. 

To see that the results of two reductions coincide,
we can continue using \eqref{red2}. 
Thus for the first reduction we will use the equality
\begin{gather*}
(\ts R_{\ts23}(v\,,\ns w)\,R_{\ts13}(u\,,\ns w)\,R_{\ts12}(u\,,\ns v)\ot1\ts)
\,\ts T_1(u)\,T_2(v)\,T_3(w)=
\\[2pt]
T_3(w)\,T_2(v)\,T_1(u)\,\ts
(\ts R_{\ts23}(v\,,\ns w)\,R_{\ts13}(u\,,\ns w)\,R_{\ts12}(u\,,\ns v)\ot1\ts)
\end{gather*}
in the notation \eqref{3.22} with $n=3\,$. Here $w$ is another formal 
variable. In the same notation, for the second reduction
we will use the equality
\begin{gather*}
(\ts R_{\ts12}(u\,,\ns v)\,R_{\ts13}(u\,,\ns w)\,R_{\ts23}(v\,,\ns w)\ot1\ts)
\,\ts T_1(u)\,T_2(v)\,T_3(w)=
\\[2pt]
T_3(w)\,T_2(v)\,T_1(u)\,\ts
(\ts R_{\ts12}(u\,,\ns v)\,R_{\ts13}(u\,,\ns w)\,R_{\ts23}(v\,,\ns w)\ot1\ts)
\,.
\end{gather*}
The two results will coincide due to \eqref{rrr}.
Note that to prove the coincidence here we will use
equalities of linear combinations of products from $\SC$ which precede 
$T^{\ts(r+1)}_{\ts ij}\,T^{\ts(s+1)}_{\ts kl}\,T^{\ts(t+1)}_{\ts gh}$
relative to the partial ordering. 

So both the inclusion and the
overlap ambiguities of our set of reductions are resolvable.
But the elements of the set $\TC$ cannot be changed by the reductions.
The linear independence of $\TC$ in $\YpN$ is now ensured by 
\cite[Theorem~1.2]{B}. 
\qed
\end{proof}

\begin{corollary}
\label{C1}
The elements of\/ $\gr\YpN$ corresponding to\/ $T^{\ts(r+1)}_{\ts ij}$ 
with the triples $(i,j,r)\in\JC$ are free generators of this supercommutative algebra.
\end{corollary}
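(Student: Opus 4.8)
The plan is to deduce the corollary directly from the basis of $\YpN$ obtained while proving Theorem \ref{T2}, but now tracking the $\ZZ$-filtration of Section \ref{12} in place of the one of Section \ref{13}. The first point to record is that the set $\TC$ is not merely linearly independent but a \emph{basis} of $\YpN$: the reductions \eqref{red1} and \eqref{red2} rewrite any product of generators as a linear combination of elements of $\TC$, so $\TC$ spans, and by \cite[Theorem 1.2]{B} the irreducible monomials forming $\TC$ are a basis. Each monomial $T^{(r_1+1)}_{i_1j_1}\cdots T^{(r_m+1)}_{i_mj_m}$ in $\TC$ is homogeneous for the filtration of Section \ref{12}, of degree $(r_1+1)+\dots+(r_m+1)$, and its image in $\gr\YpN$ is the corresponding product $X^{(r_1+1)}_{i_1j_1}\cdots X^{(r_m+1)}_{i_mj_m}$ of the generators named in the statement.

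The crucial step is to verify that both reductions respect the Section \ref{12} filtration, i.e.\ never raise its degree. For \eqref{red2} this is immediate: the reordered leading term $T^{(s+1)}_{kl}T^{(r+1)}_{ij}$ shares the degree $r+s+2$ of the left-hand side, whereas every summand of $D$ has Section \ref{12} degree $r+s+1$. For \eqref{red1} I would argue that the series $S(u)$, being the inverse of $\TA(-\ts u)$ whose constant term is $1$ and whose coefficient of $u^{-a}$ is homogeneous of Section \ref{12} degree $a$, has each coefficient $S^{(r+1)}_{ij}$ homogeneous of Section \ref{12} degree $r+1$; hence the remainder $C$ in \eqref{red1} is also homogeneous of degree $r+1$ and \eqref{red1} is degree-preserving. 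It follows that for each $d$ the elements of $\TC$ of filtration degree $\le d$ span the $d$-th filtered component of $\YpN$, and since $\TC$ is linearly independent they are a basis of it; the standard associated-graded argument then shows that the images of the elements of $\TC$ form a basis of $\gr\YpN$.

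It remains to identify this basis. Since $\gr\YpN$ is supercommutative by the relation $[\,X^{(r)}_{ij},X^{(s)}_{kl}\,]=0$ of Section \ref{12}, and the images of $\TC$ are exactly the products of the $X^{(r+1)}_{ij}$ with $(i,j,r)\in\JC$ in non-decreasing order, with adjacent factors allowed to coincide only when $\bi+\bj=0$, these images are precisely the standard monomial basis of the free supercommutative algebra on those generators. Hence the generators are free, which is the assertion. I expect the main obstacle to be the homogeneity claim underlying the treatment of \eqref{red1}: one must check that inverting $\TA(-\ts u)$ preserves homogeneity for the Section \ref{12} grading, so that the correction term $C$ cannot drop into lower filtration degree and spoil the passage to $\gr\YpN$; once this is settled, the supercommutativity and the recognition of the monomial basis are routine.
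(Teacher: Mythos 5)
Your proof is correct and takes essentially the approach the paper intends: Corollary \ref{C1} is stated without a separate argument precisely because it is the filtration-adapted reading of the Diamond Lemma proof of Theorem \ref{T2} — the set $\TC$ is a basis, the reductions \eqref{red1} and \eqref{red2} never raise the Section \ref{12} degree, and the images of $\TC$ are the standard supercommutative monomials in the $X^{\ts(r+1)}_{\ts ij}$ with $(i,j,r)\in\JC\,$. In particular, your homogeneity check for \eqref{red1} — that each coefficient of $S(u)$, as the inverse of $\TA(-\ts u)\,$, is a combination of products of generators of total Section \ref{12} degree exactly $r+1$ — is the one genuinely non-trivial point in this deduction, and your verification of it is correct.
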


Our proof of Theorem \ref{T2} also implies the 
Poincar\'e\ts-Birkhoff\ts-\ns Witt Theorem 
for the algebra $\Ug\,$, see \cite[Theorem 5.15]{MM}.
Going back to our Section \ref{10} we obtain

\begin{corollary}
\label{C2}
The homomorphism $\UpN\to\YpN$ defined by the mapping \eqref{UY} 
is an embedding of $\ZZ_2$-graded Hopf algebras.
\end{corollary}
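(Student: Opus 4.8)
The plan is to treat the two halves of the assertion separately, since the Hopf-algebra compatibility comes essentially for free and only the injectivity requires work. First I would observe that the computation at the end of Section~\ref{10} already shows $\De\ts(T_{ij}^{\ts(1)})=T_{ij}^{\ts(1)}\ot1+1\ot T_{ij}^{\ts(1)}$, $\ep\ts(T_{ij}^{\ts(1)})=0$ and $\S\ts(T_{ij}^{\ts(1)})=-\,T_{ij}^{\ts(1)}$; that is, the images of the generators of $\UpN$ under \eqref{UY} are primitive with vanishing counit and with antipode equal to minus themselves. Since the generators $f_{ij}$ of $\UpN$ are likewise primitive with these same counit and antipode, and since $\De$ and $\ep$ are algebra homomorphisms while $\S$ is an antihomomorphism, the map \eqref{UY} intertwines all three structure maps on the algebra generators and hence on all of $\UpN\,$. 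So \eqref{UY} is automatically a homomorphism of $\ZZ_2$-graded Hopf algebras, and it remains only to prove that it is injective.

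For injectivity I would pass to the graded algebra $\grp\YpN$ of Theorem~\ref{T2}. Every generator $T_{ij}^{\ts(1)}$ has degree $0$ for the $\ZZ\ts$-filtration of Section~\ref{13}, so the image of \eqref{UY} lies entirely in the filtration component $F_0\,$; and as no generator has negative filtration degree, $F_0$ equals the homogeneous component $(\grp\YpN)_0\,$. Under the isomorphism $\Ug\to\grp\YpN$ of Theorem~\ref{T2}, where $g^{\ts(r)}_{\ts ij}$ sits in degree $r\,$, this component corresponds to the degree-$0$ part of $\Ug\,$, namely the subalgebra generated by $g^{\ts(0)}_{\ts ij}=f_{ij}\,$. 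By the Poincar\'e\ts-Birkhoff\ts-\ns Witt Theorem for $\Ug$ noted after Corollary~\ref{C1}, together with the inclusion $\pN\subset\g\,$, this subalgebra is a faithful copy of $\UpN\,$. Composing \eqref{UY} with these identifications yields a map $\UpN\to F_0=(\grp\YpN)_0\to\UpN$ which sends $f_{ij}\,$, after the two sign factors $(-1)^{\bi}$ coming from \eqref{UY} and \eqref{get} cancel, back to $f_{ij}\,$; thus the composite is the identity and \eqref{UY} has a left inverse, hence is injective.

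The hard part is the sign bookkeeping of the last step: one must check that the conventions in \eqref{UY} and in the isomorphism \eqref{get} combine so that the composite through $\grp\YpN$ really is the identity on $\UpN$ (an arbitrary isomorphism would already force injectivity, but it is cleanest to exhibit the identity). The one conceptual point worth stating explicitly is that injectivity of the filtered map \eqref{UY} can be read off from its behaviour at the bottom filtration level alone, which is legitimate here precisely because the whole image sits inside $F_0=(\grp\YpN)_0\,$. Granting this, the combination of the Hopf-compatibility of the first paragraph with the injectivity of the second gives that \eqref{UY} is an embedding of $\ZZ_2$-graded Hopf algebras.
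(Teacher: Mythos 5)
Your proposal is correct and takes essentially the same route as the paper: Hopf-algebra compatibility is read off from the primitivity formulas for $T_{ij}^{\ts(1)}$ displayed at the end of Section \ref{10}, and injectivity is deduced from the isomorphism $\Ug\to\grp\YpN$ of Theorem \ref{T2} together with the Poincar\'e\ts-Birkhoff\ts-\ns Witt Theorem for $\Ug\,$, which is precisely what the paper invokes. Your explicit passage through the degree-zero filtration component $F_0=(\grp\YpN)_0$ and the sign check showing the composite $\UpN\to\UpN$ is the identity merely fill in details the paper leaves implicit.
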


Let us link 
$\YpN$ to the Yangian $\YN$
of the general linear Lie algebra $\mathfrak{gl}_{\ts N}\,$.
The latter Yangian is a deformation of the universal enveloping algebra 
of the polynomial current Lie algebra $\mathfrak{gl}_{\ts N}[u]$ 
in the class of Hopf algebras.
Comparing \eqref{rttttr} with the relations in $\YN$ as given 
in \cite[Section~1]{N4} immediately yields

\begin{corollary}
\label{C3}
The associative subalgebra of\/ $\YpN$ generated 
by elements $T^{\ts(r)}_{\ts ij}$ with $i\,,j>0$ 
is isomorphic to the algebra $\YN\,$.
\end{corollary}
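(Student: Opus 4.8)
The plan is to construct a surjective homomorphism $\phi\colon\YN\to\YpN$ landing on the subalgebra generated by the $T^{\ts(r)}_{\ts ij}$ with $i\,,j>0$, and then to deduce its injectivity from the Poincar\'e\ts-Birkhoff\ts-\ns Witt results already established, namely Theorem \ref{T2} and Corollary \ref{C1}.

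The first step is the easy but crucial observation that the defining relations of $\YpN$ degenerate to those of $\YN$ once all indices are positive. Fix $i\,,j\,,k\,,l>0$, so that $\bi=\bj=\bk=\bl=0$. Then every sign $(-1)^{(\cdots)}$ occurring in \eqref{rttttr} equals $1$, and the supercommutator on its left hand side is an ordinary commutator. Moreover $\de_{\ts-i,\ts k}=0$ and $\de_{\ts j,-l}=0$, since $-i<0<k$ and $-l<0<j$; hence the whole second and third lines of \eqref{rttttr} vanish and there remains
\begin{equation*}
[\,T_{ij}(u)\ts,T_{kl}(v)\,]=\frac{T_{kj}(u)\,T_{il}(v)-T_{kj}(v)\,T_{il}(u)}{u-v}\,,
\end{equation*}
which is exactly the defining relation of $\YN$ recorded in \cite[Section~1]{N4}. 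Writing $t^{\ts(r)}_{\ts ij}$ for the generators of $\YN$, the assignment $t^{\ts(r)}_{\ts ij}\mapsto T^{\ts(r)}_{\ts ij}$ with $1\le i,j\le N$ therefore respects all relations and defines a homomorphism $\phi\colon\YN\to\YpN$. By construction its image is the subalgebra in question, so $\phi$ is surjective onto it.

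It remains to prove that $\phi$ is injective. Here I would first use the freedom in Theorem \ref{T2} to choose the subset $\JC$ so that it contains every triple $(i,j,r)$ with $i\,,j>0$. This is legitimate: for such triples $i\ne-\ts j$, and the rule merely requires selecting one of $(i,j,r)$ and $(-\ts j\ts,-\ts i\ts,r)$, so one simply retains the positive-positive representative of each pair. With this choice, Corollary \ref{C1} asserts that the images $X^{\ts(r)}_{\ts ij}$ in $\gr\YpN$ of the generators $T^{\ts(r)}_{\ts ij}$ with $i\,,j>0$ are free generators of the supercommutative algebra $\gr\YpN$; being of $\ZZ_2$-degree $0$, they generate an honest polynomial subalgebra. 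Now filter $\YN$ by $\deg t^{\ts(r)}_{\ts ij}=r$ and $\YpN$ by the filtration of Section \ref{12}; then $\phi$ is filtration-preserving, and the Poincar\'e\ts-Birkhoff\ts-\ns Witt theorem for $\YN$ (see \cite{N4}) gives a basis of ordered monomials in the $t^{\ts(r)}_{\ts ij}$. Their images under $\phi$ are ordered monomials in the $T^{\ts(r)}_{\ts ij}$ with $i\,,j>0$, whose leading symbols are pairwise distinct monomials in the free generators $X^{\ts(r)}_{\ts ij}$, hence linearly independent in $\gr\YpN$. By the standard leading-term argument for filtered algebras this forces the images themselves to be linearly independent in $\YpN$, so $\phi$ carries a basis to a linearly independent set and is injective.

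I expect the injectivity to be the only step requiring real care, and within it the main point is to align the Yangian filtration on $\YN$ with the filtration of Section \ref{12} on $\YpN$ so that the $T^{\ts(r)}_{\ts ij}$ with $i\,,j>0$ are seen to remain algebraically independent after the quotient defining $\YpN$; this is exactly what the tailored choice of $\JC$ together with Corollary \ref{C1} supplies. The reduction of \eqref{rttttr} and the surjectivity are immediate.
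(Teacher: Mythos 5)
Your proof is correct and follows essentially the same route as the paper: the paper's very terse argument is precisely that \eqref{rttttr} with all indices positive reduces to the defining relations of $\YN$ (giving the surjection), with injectivity supplied by Theorem~\ref{T2} and Corollary~\ref{C1}, which are exactly the two steps you carry out. Your detailed injectivity argument (choosing $\JC$ to contain all positive--positive triples, which is a legitimate choice since $i\neq-\ts j$ there, and then running the leading-term argument in $\gr\YpN$) is just the intended reading of the paper's ``immediately yields.''
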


Note that the associative subalgebra of $\YpN$ 
appearing in Corollary \ref{C3} is not a Hopf subalgebra, 
see the definition \eqref{3.7}.
Thus the isomorphism of Corollary~\ref{C3} is not
an isomorphism of Hopf algebras.


\vspace{-\baselineskip}
\refstepcounter{section}
\subsection*{\bf\thesection}
\label{15}

In this section we will explicitly describe the centre
of the Yangian $\YpN\,$.
An element of an $\ZZ_2\ts$-graded associative algebra
is {\it central\/} if its supercommutator
with every element of the algebra is zero.

First consider the subalgebra of the extended Yangian $\XpN$ generated by the elements $T^{\ts(r)}_{\ts ij}$ with $i\,,j>0\,$. 
This subalgebra of $\XpN$ is isomorphic to 
$\YN\,$. Similarly to 
Corollary~\ref{C3} this isomorphism property follows from Theorem \ref{T2}
by 
\eqref{rttttr}.
Consider the formal power series in $u^{-1}$
with coefficients in this subalgebra 
\begin{equation}
\label{Au}
A\ts(u)=\sum_\si\,(-1)^{\ts\si}\,
T_{\,1,\si(1)}(u)\,
T_{\,2,\si(2)}(u+1)
\,\ldots\,
T_{\,N,\si(N)}(\ts u+N-1\ts)
\end{equation}
where $\si$ ranges over the symmetric group $\Sg_N$
permuting the indices $1\lc N\ts$. 
The power series in $(u+1)^{\ts-1}\lc(u+N-1)^{\ts-1}$ should be reexpanded
in $u^{-1}\,$. The series $A\ts(u)$ has the leading term $1\,$. 
The coefficients of $A\ts(u)$ at $u^{-1},u^{-2},\ts\ldots$ are free 
generators of the centre of that subalgebra of $\XpN\,$, see 
\cite[Section~2]{MNO}. All these coefficients 
have $\ZZ_2\ts$-degree zero,
hence the supercommutator with any of them in the algebra 
$\XpN$ is the usual commutator. 
By \cite[Proposition 2.7]{MNO} 
\begin{equation*}
A\ts(u)=\sum_\si\,(-1)^{\ts\si}\,
T_{\,\si(1),1}(\ts u+N-1\ts)\,\ldots\,
T_{\,\si(N-1),N-1}(\ts u+1\ts)\,
T_{\,\si(N),N}(u)\,.
\end{equation*}

Now take the series in $u^{-1}$ with 
coefficients in $\XpN$ and the leading term $1$
\begin{equation}
\label{66}
A\ts(u)\,A\ts(\ts1-N-u\ts)\,.
\end{equation}

\begin{theorem}
\label{T3}
The coefficients of the series \eqref{66} belong to the centre of\/ $\XpN\,$.
\end{theorem}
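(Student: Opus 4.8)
The plan is to prove that each coefficient of the series \eqref{66} commutes with every generator $T_{\ts gh}^{\ts(r)}\ts$; since all coefficients of $A\ts(u)$ lie in $\ZZ_2$-degree zero, so do those of \eqref{66}, and the supercommutator in question is an ordinary commutator. The whole argument rests on the antisymmetriser realisation of $A\ts(u)\,$. Let $\mathcal{A}_N\in\End\big((\CN)^{\ot N}\big)$ be the rank-one idempotent onto the antisymmetrisation of $e_1\ot\cdots\ot e_N\,$; then, within the positive block, $A\ts(u)$ is the $\mathfrak{gl}_N$ quantum determinant and
\begin{equation*}
\mathcal{A}_N\,T_1(u)\,T_2(u+1)\cdots T_N(u+N-1)=A\ts(u)\,\mathcal{A}_N\,,
\end{equation*}
the two sides matching the two expressions for $A\ts(u)$ in Section \ref{15}, cf.\ \cite[Proposition~2.7]{MNO}. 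Because the operator $Q$ of \eqref{q} kills $e_a\ot e_b$ with $a\ts,b>0\,$, the $R$-matrix $R_{\ts0k}(w\ts,u_k)$ restricts on the positive block to $1-P_{\ts0k}/(w-u_k)\,$, so the fusion producing $\mathcal{A}_N$ is that of $\mathfrak{gl}_N$ and the coefficients of $A\ts(u)$ are central in the subalgebra $\YN\subset\XpN$ recorded in Section \ref{15}.

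I would then test commutativity by introducing an auxiliary space $0=\CN$ and pushing $T_0(w)$ through the product, using \eqref{3.3} in the form $T_0(w)\ts T_k(u_k)=R_{\ts0k}(w\ts,u_k)^{-1}\ts T_k(u_k)\ts T_0(w)\ts R_{\ts0k}(w\ts,u_k)$ with $u_k=u+k-1\,$. Two features make the positive block harmless: $P_{\ts0k}$ transports the index of space $0$ into the determinant space, where $\mathcal{A}_N$ retains only its positive part, so the $P$-contributions see only the positive block of $T_0(w)$ and there cancel by the $\YN$-centrality just recorded; meanwhile the non-positive blocks of $T_0(w)$ couple to $A\ts(u)$ only through the terms $Q_{\ts0k}/(w+u_k)\,$, whose poles lie at $w=-u_k\,$. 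The decisive point is that $A\ts(1-N-u)$ is built from the $T$'s at the arguments $(1-N-u)+k-1=-(u+N-k)\,$, that is, at the set $\{-u_1\lc-u_N\}\,$; requiring the arguments of the second factor to be exactly the reflections of those of the first forces the shift to be $1-N-u\,$ and places a factor of the second determinant precisely at each $Q$-pole of the first.

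Realising both factors simultaneously on $2N$ determinant spaces, so that $A\ts(u)\ts A\ts(1-N-u)$ becomes an antisymmetrised product of $T$-series at the $2N$ arguments $u_1\lc u_N,-u_N\lc-u_1\,$, I would push $T_0(w)$ through all of them and collect the residue of the obstruction at each pole $w=\pm\ts u_k\,$. At $w=-u_k$ the factor $Q_{\ts0k}$ produced by the first block meets the factor coming from the $R$-matrix of the matching second-block space, and the residue is annihilated by the three-space relations \eqref{S2}, \eqref{S3} together with $Q^{\ts2}=0$ and $P\ts Q=-\ts Q\ts,\ Q\ts P=Q$ from \eqref{pq}; the Yang-Baxter equation \eqref{rrr} of Theorem \ref{T1} is used to bring the relevant $R$-factors adjacent, and rational-function identities of the type \eqref{R12} reconcile the scalar prefactors, so that the surviving relation is exactly $T_0(w)\,A\ts(u)\,A\ts(1-N-u)=A\ts(u)\,A\ts(1-N-u)\,T_0(w)\,$. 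The poles at $w=+u_k$ are treated symmetrically; as a shortcut I would use that \eqref{M} is an involutive antiautomorphism sending $A\ts(v)\mapsto A\ts(1-N-v)\,$, hence fixing the product \eqref{66}, so that the case $g<0\le h$ of the exchange relation yields the case $g\ge0>h\,$.

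The step I expect to be the genuine obstacle is this residue computation: verifying that at each matching argument the $Q$-contribution of one factor is exactly cancelled with the help of the other, which is where the precise shift $1-N-u$ is indispensable and where \eqref{pq}, \eqref{S2}, \eqref{S3} and \eqref{rrr} must be combined. By contrast the $P$-only part is routine and coincides with the classical quantum determinant computation of \cite{MNO}; the normalising series $Z\ts(u)\,$, central by Proposition \ref{P3}, enters only to control the scalar factors and to guarantee that no spurious central contribution survives.
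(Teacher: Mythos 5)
Your strategy has the right overall shape --- realising $A\ts(u)$ by fusion, pushing an auxiliary $T_0(w)$ through the product, and recognising that the shift $1-N-u$ is exactly what lets the second factor neutralise the $Q$-contributions of the first --- and this is indeed the skeleton of the paper's argument (Sections \ref{16}--\ref{24}). But there is a genuine gap: the step you yourself flag as ``the genuine obstacle'', namely the cancellation of the $Q$-terms, is precisely the content of the paper's proof, and you have not carried it out; you only list the relations you expect to be involved. Moreover, the mechanism you propose (a residue computation at the poles $w=\pm\ts u_k$, resolved by \eqref{S2}, \eqref{S3} and \eqref{rrr}) is not how the cancellation actually works. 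In the paper one forms $C\ts(u\,,\ns v)=F^{\,\prime}(u\,,\ns v)\,F^{\,\prime\prime}(1-u-N\,,\ns v)$ out of the full periplectic $R$-matrices, proves Lemmas \ref{L1} and \ref{L2} for the fused operator $H(u)$, and then shows (Propositions \ref{P8} and \ref{P9}, built on the vector-by-vector computations of Lemmas \ref{L4}--\ref{L7} and Corollaries \ref{C5}--\ref{C9}) that $C\ts(u\,,\ns v)$ acts on the distinguished vectors $f_{\ts j}$ of \eqref{fj} as the scalar $(z-1)(w+1)/(z\ts w)$ modulo the kernel of $H(u)\ot H(1-N-u)\ot1$ and the image of $Q_{\ts N,N+1}$ (respectively $Q_{\ts1,\ts2N}$ for the right action). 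The $Q$-image terms are then killed not by rational-function identities but by the relation \eqref{QQ} --- available exactly because the arguments $u$ and $-\ts u$ of the middle factors $T_N\ts,T_{N+1}$ are matched, i.e.\ because of the shift $1-N-u$ --- together with the observation that such terms have zero coefficient at $f_{\ts i}\,$. A further left/right-action comparison (Sections \ref{23}--\ref{24}) is needed to produce the product on the other side of $T_{ij}(v)$; none of this is in your sketch.

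In addition, your starting identity $\mathcal{A}_N\,T_1(u)\,T_2(u+1)\cdots T_N(u+N-1)=A\ts(u)\,\mathcal{A}_N$ is false as an operator identity in $(\End\CN)^{\ot N}\ot\XpN\ts[[u^{-1}]]\,$: the entries $T_{ij}$ of each $T_k$ carry all indices $i\ts,j=\pm\ts1\lc\pm N\,$, so the left-hand side does not vanish on vectors outside the positive block, while the right-hand side does. If instead you intend the identity only after compression to the positive block, it is true but insufficient, since conjugating $T_0(w)$ through the product immediately takes you out of that block. The paper avoids this by working with the honest fused operator $H(u)$ --- a product of the periplectic $R$-matrices, which satisfies $P_{\ts pq}\ts H(u)=-H(u)$ and $Q_{\ts pq}\ts H(u)=0$ but is emphatically not the $\mathfrak{gl}_N$ antisymmetriser on $(\CN)^{\ot N}$ (compare Lemma \ref{L2}, which has no $\mathfrak{gl}_N$ analogue and is used repeatedly) --- and by extracting $A\ts(u)\,A\ts(1-N-u)$ only through matrix coefficients at the vectors $f_{\ts j}\,$. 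So your proposal identifies the correct strategy and the correct crux, but leaves the crux unproved and rests on an incorrect fusion identity; as it stands it is not a proof.
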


We will prove this theorem in the next nine sections. 
We will now employ it to describe the centre of the algebra $\YpN\,$.  
Let $B^{\ts(1)},B^{\ts(2)}\ldots$ be
the images in $\YpN$ of the coefficients of the series \eqref{66}
at $u^{-1},u^{-2},\ts\ldots$ respectively.
These images are central in $\YpN$ by Theorem \ref{T3}. 
They all are of $\ZZ_2\ts$-degree zero. 

\begin{corollary}
\label{C4}
The coefficients $B^{\ts(2)},B^{\ts(4)}\ldots$ 
freely generate the centre of\/ $\YpN\,$.
\end{corollary}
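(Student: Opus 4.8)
The plan is to combine Theorem \ref{T3} with the Poincar\'e\ts-Birkhoff\ts-\ns Witt description of $\gr\YpN$ furnished by Corollary \ref{C1}. First, by Theorem \ref{T3} the images $B^{\ts(r)}$ of the coefficients of \eqref{66} are central in $\YpN\,$. The coefficients of $A\ts(u)$ are central in the subalgebra isomorphic to $\YN$ (see Section \ref{15}), so the two factors of \eqref{66} commute and the series $A\ts(u)\,A\ts(1-N-u)$ is invariant under $u\mapsto1-N-u\,$. In the variable $w=u+(N-1)/2$ this series is therefore even, so its coefficients at odd powers of $w^{-1}$ vanish\ts; since passing from the $u^{-1}$-expansion to the $w^{-1}$-expansion is unitriangular with scalar entries, every $B^{\ts(r)}$ is a polynomial in $B^{\ts(2)},B^{\ts(4)},\ldots$ and conversely. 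Hence $\langle\ts B^{\ts(2)},B^{\ts(4)},\ldots\ts\rangle$ is a central subalgebra equal to the subalgebra generated by all the $B^{\ts(r)}\,$, and it remains to prove that it is the whole centre $\mathcal{Z}$ of $\YpN$ and that the $B^{\ts(2k)}$ are free.

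I would run the argument in the associated graded algebra for the filtration of Section \ref{12}. By Corollary \ref{C1}, together with the identification of the generators $X^{\ts(r+1)}_{ij}$ with a basis of $\g$ supplied by Theorem \ref{T2}, one has $\gr\YpN\cong\S(\g)$ as $\ZZ_2$-graded supercommutative algebras. The supercommutator $[\,T^{\ts(r)}_{ij},T^{\ts(s)}_{kl}\,]$ has filtration degree $r+s-1\,$, so it induces on $\gr\YpN$ a Poisson bracket of degree $-1$ whose structure constants on linear elements are, by \eqref{YY}, exactly those of the Lie bracket \eqref{tt} of $\g\,$. Thus the bracket is the Kirillov-Kostant bracket, and its Poisson centre is the invariant subalgebra $\S(\g)^{\g}\,$. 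Every even central element of $\YpN$ has its symbol in the Poisson centre, so $\gr\mathcal{Z}\subseteq\S(\g)^{\g}\,$.

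The symbol of the quantum determinant \eqref{Au} is the ordinary determinant $d\ts(u)=\det\bigl(X_{ij}(u)\bigr)_{1\le i,j\le N}$ of the $\mathfrak{gl}_N$-block, see \cite{MNO}\ts; consequently the symbol of \eqref{66} is $d\ts(u)\,d\ts(1-N-u)$ and the symbols $b_{\ts2k}:=\gr B^{\ts(2k)}$ are its even coefficients, which lie in $\S(\g)^{\g}$ by the preceding paragraph. Granting that the $b_{\ts2k}$ \emph{freely generate} $\S(\g)^{\g}\,$, the corollary follows by the standard lifting argument\ts: the chain $\langle\ts b_{\ts2k}\ts\rangle\subseteq\gr\ns\langle\ts B^{\ts(2k)}\ts\rangle\subseteq\gr\mathcal{Z}\subseteq\S(\g)^{\g}=\langle\ts b_{\ts2k}\ts\rangle$ collapses to equalities, whence $\mathcal{Z}=\langle\ts B^{\ts(2k)}\ts\rangle$ because the filtration is exhaustive, while the algebraic independence of the $B^{\ts(2k)}$ is inherited from that of their symbols. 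For the independence one may argue more directly by evaluating the $B^{\ts(r)}$ on the one-parameter family of representations $\rho_{\,t}\ot\rho_{\ts-\ts t}$ of Section \ref{11}, on which \eqref{66} acts by an explicit scalar series in $u$ depending on $t\,$.

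The hard part will be the classical statement that the even coefficients of $d\ts(u)\,d\ts(1-N-u)$ freely generate $\S(\g)^{\g}\,$. This is subtle precisely because $\pN$ itself carries no non-constant invariants by \cite{S}, so every invariant of $\g=\bigoplus_{r\ge0}\g_r$ must mix the homogeneous components, and the generating series is what packages them. I would exploit the orthogonality relation \eqref{X1}, which in $\gr\YpN$ reads $\widetilde X(u)\,X(-u)=1$ (the symbol of \eqref{TAT}) and expresses the lower and off-diagonal blocks of the matrix $X(u)$ through its $\mathfrak{gl}_N$-block\ts; this should reduce the determination of $\S(\g)^{\g}$ to invariants built from the block determinant $d\ts(u)\,$, after which a comparison of Hilbert series pins down free generation. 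Carrying out this reduction and the attendant count is the step I expect to demand the most work.
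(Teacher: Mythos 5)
Your reduction is sound up to a point: centrality of the $B^{\ts(2k)}$ does follow from Theorem \ref{T3}, the evenness observation about $A\ts(u)\,A\ts(1-N-u)$ is correct, and the inclusion $\gr\mathcal{Z}\subseteq\S(\g)^{\g}$ (symbols of central elements lie in the Poisson centre) is standard. But the proof has a genuine gap exactly where you place the whole weight of the argument, and you acknowledge it yourself: the claim that the even coefficients of $d\ts(u)\,d\ts(1-N-u)$ \emph{freely generate} $\S(\g)^{\g}$ is never proved, only announced as ``the hard part'' with a sketched strategy (use \eqref{X1}, then a Hilbert-series count). Your chain $\langle\ts b_{\ts2k}\ts\rangle\subseteq\gr\ns\langle\ts B^{\ts(2k)}\ts\rangle\subseteq\gr\mathcal{Z}\subseteq\S(\g)^{\g}$ collapses to equalities only if $\S(\g)^{\g}=\langle\ts b_{\ts2k}\ts\rangle$; if the Poisson centre were strictly larger, the sandwich proves nothing about generation. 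Nothing you cite rules that out: the analogous statement even for untwisted current algebras $\mathfrak{gl}_{\ts N}[u]$ is a nontrivial theorem about invariants of jet schemes, and here the superalgebra setting (where by \cite{S} the invariants of $\pN$ itself are trivial, so all invariants of $\g$ must mix the graded components) makes it harder, not easier. So as written the proposal establishes centrality and, with some care about the evaluation argument, algebraic independence, but not that $B^{\ts(2)},B^{\ts(4)},\ldots$ generate the whole centre.

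The paper sidesteps this invariant-theoretic problem entirely by choosing the \emph{other} filtration, and this is the idea your proposal misses. Working with $\grp$ from Section \ref{13} (degree of $T^{\ts(r)}_{ij}$ equal to $r-1$) instead of $\gr$ from Section \ref{12}, the associated graded algebra is not the supercommutative algebra $\S(\g)$ but the universal enveloping algebra $\Ug$ itself, by Theorem \ref{T2}; and since any product of two or more generators drops in this filtration, the symbol of $B^{\ts(2r)}$ is the \emph{linear} element \eqref{B2r}, namely $2\,Y_{11}^{\ts(2r)}+\ldots+2\,Y_{NN}^{\ts(2r)}$, which under \eqref{get} is a multiple of the central element $(\ts e_{\ts11}+e_{\ts-1,-1}+\ldots+e_{\ts NN}+e_{\ts-N,-N}\ts)\,u^{\ts2r-1}$ of the Lie superalgebra $\g$ itself — all determinantal complexity disappears at leading order. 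Independence is then just the Poincar\'e\ts-Birkhoff\ts-Witt theorem for $\Ug$ \cite{MM}, and generation follows from a short quotient argument: the quotient of $\Ug$ by the ideal these elements generate is $\Uh$ for the twisted current algebra over $\a=\glN/\ts\CC$ (quotient by the identity matrix), and since $\a$ has trivial centre, the centre of $\Uh$ is trivial by \cite[Proposition 3.6]{N3}. If you wish to salvage your route through $\gr\YpN\cong\S(\g)$, you must actually supply the jet-scheme-type theorem you deferred; the paper's filtration trick shows it can be bypassed altogether.
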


\begin{proof}
The elements $B^{\ts(2)},B^{\ts(4)}\ldots$ of $\YpN$ are central
by Theorem \ref{T3}. It
suffices to prove now that the images of these elements in the graded algebra
$\grp\YpN$ are free generators of its centre.
For $r\geqslant1$ the image of $B^{\ts(\ts2\ts r)}$ in $\grp\YpN$ equals
\begin{equation}
\label{B2r}
2\,Y_{11}^{\ts(\ts2\ts r)}+\ldots+2\,Y_{NN}^{\ts(\ts2\ts r)}\,.
\end{equation}

By Theorem \ref{T2} the algebra $\grp\YpN$
is isomorphic to the universal enveloping algebra 
$\Ug$ via \eqref{get}. Under this isomorphism the element \eqref{B2r} 
of $\grp\YpN$ with any $r\ge1$ corresponds to the element of $\Ug$
$$
-\,2\,g_{\ts11}^{\ts(\ts2\ts r-1)}-\ldots-2\,g_{\ts NN}^{\ts(\ts2\ts r-1)}=
-\,2\,(\ts e_{\ts 11}+e_{\ts-1\ts,\ts-1}+\ldots+
e_{\ts NN}+e_{\ts-N\ts,\ts-N}\ts)\,u^{\ts2\ts r-1}
$$
from our Section \ref{14}. The latter elements
freely generate of the centre of $\Ug\,$.

Indeed, they are algebraically independent
due to the Poincar\'e\ts-Birkhoff-Witt Theorem for 
$\Ug\,$, see \cite[Theorem 5.15]{MM}.
To show that they generate the centre of $\Ug$ consider 
the quotient of $\Ug$ by the ideal 
they generate. This quotient  
is isomorphic to the universal enveloping algebra $\Uh$ of the Lie superalgebra
$$
\h=\{\,h\ts(u)\in\a\ts[u]:\om\ts(\ts h\ts(u))=h\ts(-\ts u)\ts\}
$$
where $\a$ is the quotient of Lie superalgebra $\glN$ 
by the span of a central element
$$
e_{\ts 11}+e_{\ts-1\ts,\ts-1}+\ldots+e_{\ts NN}+e_{\ts-N\ts,\ts-N}\,.
$$
The automorphism $\om$ of $\glN$ maps this element to its negative
and therefore descends to the quotient $\a\,$.
But the centre of the Lie superalgebra $\a$ is trivial.
Hence the centre of 
$\Uh$ is also trivial by \cite[Proposition 3.6]{N3}.
\qed
\end{proof}

To finish this section let us consider the images of the elements
$B^{\ts(1)},B^{\ts(2)}\ldots$ of $\YpN$
in the supercommutative algebra $\gr\YpN\,$.
In the notation of Section~\ref{12}
let $X(u)$ the $2\ts N\times 2\ts N$ matrix
whose $ij$ entry is the series 
$$
X_{ij}(u)\,{(-1)}^{\,\bi\ts\bj\ts+\,\bj}\,.
$$
We index the rows and columns of $X(u)$ by
the numbers $1\lc N,-\ts1\ts,\ldots,-\ts N\ts$.

The matrix $X(u)$ is invertible.
Let $\XP_{\ts ij}(u)$ be the $ij$ entry of the inverse matrix.
Then the relations \eqref{X1} imply that for any indices $i$ and $j$
$$
X_{\ts ij}(-\ts u)=\XP_{\ts-j\ts,\ts-i\ts}(u)\,.
$$
Therefore by using the definition 
\eqref{Au}\ts, the image in 
$\gr\YpN\ts[[u^{-1}]]$ of the series 
\begin{equation*}
B(u)=
1+B^{\ts(1)}\ts u^{-1}+B^{\ts(2)}\ts u^{-2}+\ldots
\end{equation*}
can be written as the product of the sum
$$
\sum_\si\,(-1)^{\ts\si}\,
X_{\ts1,\si(1)}(u)
\,\ldots\,
X_{\ts N,\si(N)}(u)
\vspace{-8pt}
$$
by the sum
\vspace{4pt}
$$
\sum_\si\,(-1)^{\ts\si}\,
\XP_{\ts-\si(1),-1}(u)
\,\ldots\,
\XP_{\ts-\si(N),-N}(u)\,.
$$
This product is just the {\it Berezinian\/} or the {\it superdeterminant\/}
of the matrix~$X(u)\,$. 
Hence the series $B(u)$ is an analogue for $\YpN$
of the quantum Berezinian \cite{N1} for the Yangian of 
the general linear Lie superalgebra $\glMN\,$, see \cite{N5} for details.


\refstepcounter{section}
\subsection*{\bf\thesection}
\label{16}

We will now begin our proof of Theorem \ref{T3}.
We will be use the same method as in the proof of Proposition \ref{P3}. 
However, the calculations will be more involved. 
Fix any $n\ge1$ and take the rational function of $u$ with values in 
$(\End\CN)^{\ts\ot\ts n}$
$$
H(u)=\prod_{1\le p<q\le n}\,R_{\,pq\ts}(\ts u+n-p\,,\ns u+n-q\ts)
$$
where the factors are arranged from left to right 
by ordering their pairs of indices $(p\,,q)$ lexicographically.
The ordering of factors
can be altered by employing~\eqref{rrr}, without affecting the function.
By using \eqref{3.3} repeatedly we get the equality 
$$
(\ts H(u)\ot1\ts)\ts\,T_1(u+n-1)\,\ldots\,T_n(u)=
T_n(u)\,\ldots\,T_1(u+n-1)\ts\,(\ts H(u)\ot1\ts)
$$  
of formal power series in $u^{-1}$ with coefficients in
$(\End\CN)^{\ts\ot\ts n}\ot\XpN\,$.

\begin{lemma}
\label{L1}
For any\/ $1\le p<q\le n$ we have the equalities in
$(\End\CN)^{\ts\ot\ts n}\ts(u)$
$$
P_{\ts pq}\ts H(u)=-\,H(u)
\quad\text{and}\quad
Q_{\ts pq}\ts H(u)=0\,.
$$
\end{lemma}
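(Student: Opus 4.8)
The plan is to deduce both identities from a single elementary computation about adjacent factors, and then to promote that computation to all pairs by exploiting the symmetric-group action carried by the operators $P_{pq}\,$. Write $x_p=u+n-p\,$, so that $x_p-x_q=q-p$ for $p<q$ and in particular $x_p-x_{p+1}=1\,$. The factor of $H(u)$ attached to an adjacent pair then satisfies, using $P^{\ts2}=1$ and the relations \eqref{pq},
\[
P_{p,p+1}\,R_{p,p+1}(x_p,x_{p+1})
=P_{p,p+1}\Bigl(1-P_{p,p+1}+\frac{Q_{p,p+1}}{x_p+x_{p+1}}\Bigr)
=-\,R_{p,p+1}(x_p,x_{p+1})\,,
\]
because $P_{p,p+1}^{\ts2}=1$ and $P_{p,p+1}\,Q_{p,p+1}=-\,Q_{p,p+1}\,$. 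This is the only place where the special choice of consecutive parameters enters: it is exactly what forces the difference of arguments to equal $1$ on adjacent pairs.

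First I would prove $P_{p,p+1}\,H(u)=-\,H(u)$ for every $p=1\lc n-1\,$. Here I would use the fact, noted just before the statement, that the order of the factors of $H(u)$ may be altered by means of the Yang--Baxter equation \eqref{rrr} without changing the product. For a fixed $p$ the aim is to reorder $H(u)$ so that $R_{p,p+1}(x_p,x_{p+1})$ becomes the leftmost factor: as this factor is moved to the left it either commutes past a neighbour whose tensor legs are disjoint from $\{p\,,p+1\}\,$, or it meets two factors sharing one of these legs, in which case \eqref{rrr} applied to the corresponding triple of legs lets it pass. This yields a factorisation $H(u)=R_{p,p+1}(x_p,x_{p+1})\,\widetilde H(u)$ for some product $\widetilde H(u)$ of the remaining factors, and applying the displayed identity gives $P_{p,p+1}H(u)=-H(u)\,$. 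For $p=1$ this is immediate since $R_{\ts12}(x_1,x_2)$ is already the first factor.

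Next I would promote this to all pairs and extract the $Q$\ts-statement. The operators $P_{p,p+1}$ obey $P_{p,p+1}^{\ts2}=1$ and the braid relation \eqref{S1}, and distinct adjacent transpositions with disjoint legs commute, so they define a representation of $\Sg_n$ on $(\CN)^{\ot\ts n}$ in which $P_{pq}$ represents the transposition $(p\,q)\,$. The identities just proved say $H(u)$ is fixed up to sign by every generator, hence $P_\sigma\,H(u)=\mathrm{sgn}(\sigma)\,H(u)$ for all $\sigma\in\Sg_n\,$. Writing $A_n=\tfrac1{n!}\sum_\sigma\mathrm{sgn}(\sigma)\,P_\sigma$ for the antisymmetriser, this gives $A_n\,H(u)=H(u)\,$. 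Since every transposition is odd, $P_{pq}\,A_n=-\,A_n\,$, and therefore $P_{pq}\,H(u)=P_{pq}\,A_n\,H(u)=-\,H(u)$ for all $p<q\,$, which is the first assertion. For the second I would use \eqref{pq} once more: from $Q_{pq}\,P_{pq}=Q_{pq}$ and $P_{pq}\,A_n=-\,A_n$ one gets $Q_{pq}\,A_n=Q_{pq}\,P_{pq}\,A_n=-\,Q_{pq}\,A_n\,$, so $Q_{pq}\,A_n=0\,$, whence $Q_{pq}\,H(u)=Q_{pq}\,A_n\,H(u)=0\,$.

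The only delicate step is the reordering that brings $R_{p,p+1}(x_p,x_{p+1})$ to the front; everything else is the short algebra above. This is precisely the combinatorial core of the fusion procedure. The clean way to justify it is the wiring-diagram picture: $H(u)$ realises the reversal permutation $w_0\in\Sg_n$ with the $i$\ts-th strand carrying parameter $x_i\,$, each crossing of strands $p\,,q$ contributing $R_{pq}(x_p,x_q)\,$, and \eqref{rrr} is exactly the consistency condition making the product independent of the reduced word chosen for $w_0\,$. As every simple reflection is a left descent of $w_0\,$, there is a reduced word beginning with $s_p=(p\,,p+1)\,$, and such a word crosses strands $p\,,p+1$ first, placing $R_{p,p+1}(x_p,x_{p+1})$ as the leftmost factor. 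I expect the careful bookkeeping of which triple of legs each application of \eqref{rrr} involves as this factor migrates leftward to be the main obstacle in turning this plan into a rigorous argument.
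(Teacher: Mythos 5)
Your proof is correct, and its core coincides with the paper's: the paper likewise computes $P\,R\ts(u+1\,,\ns u)=-\,R\ts(u+1\,,\ns u)$ from $P^{\ts2}=1$ and \eqref{pq}, and uses \eqref{rrr} to rearrange the factors of $H(u)$ so that the adjacent factor $R_{\,p,p+1}$ stands leftmost (your reduced-word, left-descent justification of this rearrangement fills in a step the paper merely asserts). Where you genuinely differ is in passing from adjacent pairs to arbitrary $p<q\,$. The paper's $n=2$ computation also covers $Q\,$, namely $Q\,R\ts(u+1\,,\ns u)=0$ via $Q\ts P=Q$ and $Q^{\ts2}=0\,$, so both identities are settled at once for adjacent pairs; the general case is then handled by the remark that every $P_{\ts pq}$ and $Q_{\ts pq}$ with $p<q$ is a product of such elements with $q=p+1$ (conjugation by adjacent swaps). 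You instead invoke the $\Sg_n$-action generated by the super-swaps: from the adjacent case you get $P_\si\,H(u)=\mathrm{sgn}\ts(\si)\,H(u)$ for all $\si\in\Sg_n\,$, hence $A_n\ts H(u)=H(u)$ for the antisymmetriser, and then $P_{\ts pq}\ts A_n=-\,A_n$ together with $Q_{\ts pq}\ts P_{\ts pq}=Q_{\ts pq}$ yields both identities for all pairs. This buys two things: the $Q$-statement becomes a purely formal consequence of the $P$-statement (you never need the computation $Q\,R=0$), and you avoid expressing non-adjacent $Q_{\ts pq}$ through adjacent elements. The only extra input is that the paper's element $P_{\ts pq}$ for non-adjacent $p\ts,q$ is precisely the operator representing the transposition $(p\,q)$ in that $\Sg_n$-action; this is a standard fact about super-swaps which the paper itself uses implicitly, for instance in the identity $H(u)\,H(u)=n\ts!\,H(u)$ and in the antisymmetriser formula of Section \ref{16}, so it does not constitute a gap.
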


\begin{proof}
For $n=1$ we have nothing to prove. In particular, in this case $H(u)=1\,$.
Consider the case of $n=2\,$. Then $H(u)=R\ts(u+1\,,\ns u)$ while
$$
P\,R\ts(u+1\,,\ns u)=P\,\biggl(1-P+\frac{Q}{2\ts u+1}\,\biggr)=
P-1-\frac{Q}{2\ts u+1}=-\,R\ts(u+1\,,\ns u)
$$
by the equality $P^{\,2}=1$ and by the first equality in \eqref{pq}.
By the second and third equalities in \eqref{pq} we also have 
$$
Q\,R\ts(u+1\,,\ns u)=Q\,\biggl(1-P+\frac{Q}{2\ts u+1}\,\biggr)=
Q-Q-\frac{0}{2\ts u+1}=0\,.
$$

Now for any $n>2$ and for any given index $p<n$ let $q=p+1\,$. 
By \eqref{rrr} we can rearrange the factors of 
$H(u)$ so that $R_{\,p,p+1\ts}(\ts u+n-p\,,\ns u+n-p-1\ts)$
is the leftmost factor. Then the above calculations for $n=2$ imply the
equalities of our lemma for any $n\ts$.
Hence our lemma holds whenever $q=p+1\,$.

In turn, this implies the required equalities for every $q>p\,$.
Indeed, the set of all elements $P_{\ts pq}$ and $Q_{\ts pq}$ 
with $p<q$ is generated in the algebra $(\End\CN)^{\ts\ot\ts n}$
by those elements where $q=p+1\,$.
\qed
\end{proof}

It follows from Lemma \ref{L1} that
every value of the normalized function $H(u)/\ts n\ts!$
is idempotent in the algebra $(\End\CN)^{\ts\ot\ts n}\,$. Indeed, 
by using the lemma we get
$$
H(u)\,H(u)=
\prod_{1\le p<q\le n}\biggl(\,1+\frac{1}{p-q}\,\biggr)\cdot
H(u)=n\ts!\,H(u)\,.
$$ 

Elements of 
$(\End\CN)^{\ts\ot\ts n}$ act on 
$(\CN)^{\ts\ot\ts n}$ by \eqref{XY}~and~\eqref{xy}\ts .
Take any vector $e_{\ts i_1}\ot\ldots\ot e_{\ts i_n}$
with only positive indices $i_{\ts1}\lc i_{\ts n}\,$.
Then all elements $Q_{\ts pq}$ with $1\le p<q\le n$ annihilate this vector. 
So the image $H(u)\ts(\ts e_{\ts i_1}\ot\ldots\ot e_{\ts i_n})$ equals
$$
\prod_{1\le p<q\le n}\,
\biggl(\,1+\frac{P_{\ts pq}}{p-q}\,\biggr)
\cdot(\ts e_{\ts i_1}\ot\ldots\ot e_{\ts i_n})=\sum_\si\,(-1)^{\ts\si}\,
e_{\ts i_{\ts\si(1)}}\ot\ldots\ot e_{\ts i_{\ts\si(n)}}
$$
where $\si$ ranges over the group $\Sg_n\,$. The
last equality is well known \cite[Section~2]{MNO}. 

\begin{lemma}
\label{L2}
For any\/ $1\le p<n$ we have an equality in
$(\End\CN)^{\ts\ot\ts n}\ts(u)$
$$
H(u)\,Q_{\ts p,p+1}=
(\ts2\ts u+2\ts n-2\ts p-1\ts)\,H(u)\,(1+P_{\ts p,p+1})\,.
$$
\end{lemma}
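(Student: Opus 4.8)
The plan is to peel off the single factor of $H(u)$ that carries both indices $p$ and $p+1$, move it to the right end of the product, and then reduce the whole statement to two one-line identities in the tensor slots $p$ and $p+1$. As recorded when $H(u)$ was introduced, the order of the factors may be rearranged by means of \eqref{rrr} without changing the function; I would therefore fix an ordering, admissible with respect to \eqref{rrr}, in which the factor $R_{\ts p,p+1}(\ts u+n-p\,,\ns u+n-p-1\ts)$ stands last, and write
$$
H(u)=H_0(u)\ts\,R_{\ts p,p+1}(\ts u+n-p\,,\ns u+n-p-1\ts)\,,
$$
where $H_0(u)$ is the product of all the remaining factors. Abbreviating $a=u+n-p$ and $b=u+n-p-1$, so that $a-b=1$ and $a+b=2\ts u+2\ts n-2\ts p-1$, the rightmost factor is $1-P_{\ts p,p+1}+Q_{\ts p,p+1}/(a+b)$.

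Everything then comes down to two computations in the slots $p\,,p+1$, using only $P^{\ts2}=1$ and the relations \eqref{pq}. Since $P\,Q=-\,Q$ and $Q^{\ts2}=0$, multiplying the rightmost factor by $Q_{\ts p,p+1}$ gives
$$
R_{\ts p,p+1}(a\,,\ns b)\,Q_{\ts p,p+1}=2\,Q_{\ts p,p+1}\,;
$$
since $1-P^{\ts2}=0$ and $Q\,P=Q$, multiplying it instead by $1+P_{\ts p,p+1}$ gives
$$
R_{\ts p,p+1}(a\,,\ns b)\,(1+P_{\ts p,p+1})=\frac{2\,Q_{\ts p,p+1}}{a+b}\,.
$$
Multiplying both identities on the left by $H_0(u)$ then yields $H(u)\,Q_{\ts p,p+1}=2\,H_0(u)\,Q_{\ts p,p+1}$ and $(a+b)\,H(u)\,(1+P_{\ts p,p+1})=2\,H_0(u)\,Q_{\ts p,p+1}$, and comparing the two right-hand sides gives precisely the asserted equality, since $a+b=2\ts u+2\ts n-2\ts p-1$.

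The one genuinely load-bearing step is the first: the freedom to bring $R_{\ts p,p+1}$ to the rightmost position. This is exactly where the Yang-Baxter equation \eqref{rrr} is used, through braid moves on triples of indices of the form $(r,p,p+1)$ or $(p,p+1,s)$ together with the trivial commuting of factors acting on disjoint slots; once the factorization is available, the rest is the two-line manipulation above. Note that Lemma \ref{L1} is not needed for this argument, only the local relations \eqref{pq} and $P^{\ts2}=1$ applied on the right of $H_0(u)$.
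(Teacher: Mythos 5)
Your proof is correct and follows essentially the same route as the paper: both arguments use \eqref{rrr} to move the factor $R_{\ts p,p+1}(\ts u+n-p\,,\ns u+n-p-1\ts)$ to the rightmost position in $H(u)$ and then conclude by a local computation in the tensor slots $p\,,p+1\,$. The only cosmetic difference is that the paper packages that local step as the vanishing identity $R\ts(u+1\,,\ns u)\,R\ts(-\,u-1\,,-\,u)=0$ obtained from \eqref{rr}, whereas you multiply out directly using $P^{\ts2}=1$ and \eqref{pq}; the two calculations are equivalent.
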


\begin{proof}
For $n=1$ we have nothing to prove. 
Let $n=2\,$. Then $H(u)=R\ts(u+1\,,\ns u)$ while
by the relation \eqref{rr} we have
$$
R\ts(u+1\,,\ns u)\,R\ts(-\,u-1\,,-\,u)=0\,.
$$
Multiplying this equality by $2\ts u+1$ and using the definition 
of the second factor at the left hand side we get the equality
$$
R\ts(u+1\,,\ns u)\,(\ts (\ts2\ts u+1)\ts(\ts1+P\ts)-Q\ts)=0\,.
$$
This implies our lemma for $n=2$ and $p=1\,$. 
For any $n>2$ and $1\le p<n$ by using
\eqref{rrr} we can rearrange the factors of the product 
$H(u)$ so that the factor 
$$
R_{\,p,p+1\ts}(\ts u+n-p\,,\ns u+n-p-1\ts)
$$
is the rightmost. Then the above calculation with 
$u$ replaced by $u+n-p-1$ implies the lemma.
\qed
\end{proof}


\vspace{-\baselineskip}
\refstepcounter{section}
\subsection*{\bf\thesection}
\label{17}

Let us continue our proof of Theorem \ref{T3}. 
For any $n\ge1$ consider the rational function of $u\,,v$ with values in 
the algebra $(\End\CN)^{\ts\ot\ts (n+1)}$
$$
F\ts(u\,,\ns v)=R_{\,1,n+1\ts}(\ts u+n-1\,,\ns v\ts)\,\ldots\, 
R_{\,n,n+1\ts}(\ts u\,,\ns v\ts)\,.
$$
By using \eqref{3.3} repeatedly we get the equality 
of formal power series in $u^{-1},v^{-1}$
\begin{gather}
\notag
(\ts F\ts(u\,,\ns v)\ot1\ts)\ts\,
T_{\ts1}(u+n-1)\,\ldots\,T_{\ts n}(u)\,T_{\ts n+1}(v)=
\\[4pt]
\label{ftt}
T_{\ts n+1}(v)\,
T_{\ts1}(u+n-1)\,\ldots\,T_{\ts n}(u)\,\ts(\ts F\ts(u\,,\ns v)\ot1\ts)
\end{gather}
which have their coefficients in the algebra 
$(\End\CN)^{\ts\ot\ts(n+1)}\ot\XpN\,$.

\begin{lemma}
\label{L3}
For any $n\ge1$ the rational function
$(\ts H(u)\ot1\ts)\,F\ts(u\,,\ns v)$ of\/ $u\,,v$ with values in
$(\End\CN)^{\ts\ot\ts (n+1)}$
is equal to
$$
\biggl(\,1-\frac{P_{\,1,n+1}+\ldots+P_{\,n,n+1}}{u-v}+
\frac{Q_{\,1,n+1}+\ldots+Q_{\,n,n+1}}{u+v+n-1}\,\biggr)\,
(\ts H(u)\ot1\ts)
$$
plus terms divisible on the left by\/ 
$Q_{\ts pq}\in(\End\CN)^{\ts\ot\ts (n+1)}$ with 
$1\le p<q\le n\,$.
\end{lemma}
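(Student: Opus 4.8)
The plan is to argue by induction on $n$, the case $n=1$ being immediate: there $H(u)=1$, the function $F(u,v)$ reduces to the single factor $R_{\,1,2}(u,v)=1-P_{12}/(u-v)+Q_{12}/(u+v)$, and since there are no pairs $1\le p<q\le 1$ the asserted formula holds on the nose, the denominator $u+v+n-1$ being just $u+v$. Throughout I will write $\mathcal{L}$ for the left ideal of $(\End\CN)^{\ot(n+1)}(u)$ generated by the elements $Q_{pq}$ with $1\le p<q\le n$; the ``terms divisible on the left by $Q_{pq}$'' in the statement are precisely the elements of $\mathcal{L}$. I abbreviate $\Sigma_P=P_{\,1,n+1}+\ldots+P_{\,n,n+1}$ and $\Sigma_Q=Q_{\,1,n+1}+\ldots+Q_{\,n,n+1}$.

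For the inductive step I split off the tensor leg $1$. Ordering the factors of $H(u)$ lexicographically and using the freedom to reorder them granted by \eqref{rrr}, I write $H(u)=W\,\tilde H$, where $W=\prod_{q=2}^{n}R_{\,1q}(u+n-1,u+n-q)$ collects the factors with $p=1$ and $\tilde H=\prod_{2\le p<q\le n}R_{pq}(u+n-p,u+n-q)$ is the size-$(n-1)$ antisymmetrizer acting on the legs $2,\ldots,n$. Likewise $F(u,v)=R_{\,1,n+1}(u+n-1,v)\,\tilde F(u,v)$ with $\tilde F(u,v)=\prod_{p=2}^{n}R_{\,p,n+1}(u+n-p,v)$. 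The key simplification is that $\tilde H$ and $R_{\,1,n+1}(u+n-1,v)$ act on disjoint legs and hence commute, so that $(H(u)\ot1)\,F(u,v)=W\,R_{\,1,n+1}(u+n-1,v)\,\tilde H\,\tilde F(u,v)$. Applying the induction hypothesis to $\tilde H\,\tilde F$, a size-$(n-1)$ instance with base variable $u$ and special leg $n+1$, rewrites this as $W\,R_{\,1,n+1}(u+n-1,v)\bigl(1-\Sigma_P'/(u-v)+\Sigma_Q'/(u+v+n-2)\bigr)\tilde H$ plus a term $W\,R_{\,1,n+1}(u+n-1,v)\,E'$, where $\Sigma_P'=\sum_{p\ge2}P_{\,p,n+1}$, $\Sigma_Q'=\sum_{p\ge2}Q_{\,p,n+1}$, and $E'$ lies in the left ideal generated by the $Q_{pq}$ with $2\le p<q\le n$.

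It then remains to transform the first summand into $\bigl(1-\Sigma_P/(u-v)+\Sigma_Q/(u+v+n-1)\bigr)(H(u)\ot1)$ modulo $\mathcal{L}$. The constant $1$ of the bracket reconstitutes $W\,R_{\,1,n+1}(u+n-1,v)\,\tilde H=(H(u)\ot1)\,R_{\,1,n+1}(u+n-1,v)$; here the relations \eqref{R1}--\eqref{R8} rewrite $P_{\,1,n+1}$ and $Q_{\,1,n+1}$ in a form to which Lemma \ref{L1} applies, letting $H(u)$ move to the right and supplying the missing $P_{\,1,n+1}$ and $Q_{\,1,n+1}$ summands. The remaining task is the homogenization of denominators: the new $P_{\,1,n+1}$ arrives with denominator $u+n-1-v$ rather than $u-v$, while the inherited $\Sigma_Q'$ carries $u+v+n-2$ rather than $u+v+n-1$. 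Both defects are repaired by the cross terms that appear when $W\,R_{\,1,n+1}$ is commuted past the bracket: reducing every product that is quadratic or higher in the $P_{\,\cdot,n+1}$ and $Q_{\,\cdot,n+1}$ by the three-index Yang--Baxter relations \eqref{R1}--\eqref{R8} and \eqref{S1}--\eqref{S7}, and then invoking $P_{pq}H(u)=-H(u)$, $Q_{pq}H(u)=0$ of Lemma \ref{L1} together with the scalars $2u+2n-2p-1$ of Lemma \ref{L2}, forces the higher terms either to collapse onto the linear expression or to fall into $\mathcal{L}$; the surviving scalars combine through the partial-fraction identities of the family \eqref{R12} to produce exactly the uniform denominators $u-v$ and $u+v+n-1$.

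Finally the inductive error $W\,R_{\,1,n+1}(u+n-1,v)\,E'$ must itself be shown to lie in $\mathcal{L}$. Since $R_{\,1,n+1}(u+n-1,v)$ acts on the legs $1,n+1$, it commutes with each generator $Q_{pq}$ ($2\le p<q\le n$) occurring in $E'$, and the factors of $W$ other than $R_{\,1p}$ and $R_{\,1q}$ commute with $Q_{pq}$ as well; one is thereby reduced to the three legs $1,p,q$ and to checking that the product of $R_{\,1p}$ and $R_{\,1q}$ with $Q_{pq}$ on the right is again left-divisible by some $Q_{ab}$, which is a direct consequence of \eqref{R1}--\eqref{R8} and \eqref{S1}--\eqref{S7}. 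I expect this last bookkeeping --- keeping every stray summand left-divisible by a $Q_{pq}$ while simultaneously carrying out the two denominator shifts $u+n-1-v\to u-v$ and $u+v+n-2\to u+v+n-1$ in the main term --- to be the principal difficulty, precisely the ``more involved'' calculation that the comparison with Proposition \ref{P3} anticipates.
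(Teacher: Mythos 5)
Your induction skeleton---the base case $n=1$, peeling off tensor leg $1$ via $H(u)=W\,\tilde H$ and $F(u,v)=R_{\,1,n+1}(u+n-1,v)\,\tilde F(u,v)$, applying the inductive hypothesis to $\tilde H\,\tilde F$, and keeping all stray terms inside the left ideal $\mathcal{L}$---matches the paper's strategy, and your final paragraph (pushing $Q_{pq}$ leftward through $W\,R_{\,1,n+1}$ by the three-leg relations) is correct, if compressed. The genuine gap is in the main term. You must turn $W\,R_{\,1,n+1}(u+n-1,v)\,\bigl(1-\Sigma_P'/(u-v)+\Sigma_Q'/(u+v+n-2)\bigr)\,\tilde H$ into $\bigl(1-\Sigma_P/(u-v)+\Sigma_Q/(u+v+n-1)\bigr)(H(u)\ot 1)$ modulo $\mathcal{L}$, and this is exactly the content of the lemma; yet at this point you only assert that commuting $W\,R_{\,1,n+1}$ past the bracket, reducing higher products by \eqref{R1}--\eqref{R8} and \eqref{S1}--\eqref{S7}, and invoking Lemmas \ref{L1} and \ref{L2} makes everything collapse. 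In the configuration you have set up this cannot be carried out as described: Lemma \ref{L1} states $P_{pq}\,H(u)=-H(u)$ and $Q_{pq}\,H(u)=0$, i.e.\ it controls multiplication of $H(u)$ on the \emph{left}, whereas in your main term the factor $W$ of $H(u)$ stands to the \emph{left} of the bracket and only $\tilde H$ (not the full $H(u)$) stands on the right; and the bracket involves $P_{\ts p,n+1}$, $Q_{\ts p,n+1}$, which Lemma \ref{L1} for $\tilde H$ does not touch. Your claim that the constant term ``reconstitutes $(H(u)\ot1)\,R_{\,1,n+1}$'' and that \eqref{R1}--\eqref{R8} then ``let $H(u)$ move to the right'' is unsupported: the only way to move $H(u)$ past $P_{\,1,n+1}$ is conjugation, $H(u)\,P_{\,1,n+1}=P_{\,1,n+1}\,H(u)^{(1\to n+1)}$, which leaves the required form. (Lemma \ref{L2} is irrelevant here: it concerns $H(u)\,Q_{\,p,p+1}$ with both indices among the legs of $H$, and is not used in the paper's proof of this lemma at all.)

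The missing idea is the paper's very first step: \emph{before} inducting, use Theorem \ref{T1} to reverse the whole product, $(H(u)\ot1)\,F(u,v)=R_{\,n,n+1}(u,v)\cdots R_{\,1,n+1}(u+n-1,v)\,(H(u)\ot1)$, so that the full $H(u)\ot1$ sits on the right throughout. Applying the inductive hypothesis in the same reversed form then places the inherited bracket directly to the \emph{left} of $R_{\,1,n+1}(u+n-1,v)\,(H(u)\ot1)$, and the rest of the proof is a finite, explicit computation: expand the product of the two brackets and simplify the four cross terms by multiplying against $H(u)\ot1$ on the right, using Lemma \ref{L1} together with $P_{\,p,n+1}P_{\,1,n+1}=P_{\,1,n+1}P_{\,1p}$, $Q_{\,p,n+1}P_{\,1,n+1}=-P_{\,1,n+1}Q_{\,1p}$, $P_{\,p,n+1}Q_{\,1,n+1}=Q_{\,1p}P_{\,p,n+1}$ and $Q_{\,p,n+1}Q_{\,1,n+1}=Q_{\,p,n+1}P_{\,1p}$; the surviving scalars then recombine by partial fractions into the uniform denominators $u-v$ and $u+v+n-1$. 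This arrangement also makes your last paragraph unnecessary, since the inherited error sits on the far left and remains in $\mathcal{L}$ under right multiplication. Without the reversal, the computation you wave at is not the paper's clean four-term expansion but a substantially harder commutation problem, and you have given no argument that it closes.
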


\begin{proof}
Using the relation \eqref{rrr} we can rewrite
the product $(\ts H(u)\ot1\ts)\,F\ts(u\,,\ns v)$ as
\begin{equation}
\label{induct}
R_{\,n,n+1\ts}(\ts u\,,\ns v\ts)\,\ldots\,R_{\,1,n+1\ts}(\ts u+n-1\,,\ns v\ts)
\,(\ts H(u)\ot1\ts)\,.
\end{equation}
We will prove by induction on $n$ that the latter product 
is equal to the sum in the lemma.
If $n=1$ then $H(u)=1$ while $F(u\,,\ns v)=R(u\,,\ns v)\,$. 
Then the required equality is just the definition of 
$R(u\,,\ns v)$ as given in Section \ref{5}.

Suppose that $n>1\,$. By using the induction assumption and the definition 
of $H(u)$ for $n-1$ instead of $n\,$, the product \eqref{induct} is equal to 
\begin{gather*}
\biggl(\,1-\frac{P_{\,2,n+1}+\ldots+P_{\,n,n+1}}{u-v}+
\frac{Q_{\,2,n+1}+\ldots+Q_{\,n,n+1}}{u+v+n-2}\,\biggr)\,\times
\\[2pt]
\biggl(\,1-\frac{P_{\,1,n+1}}{u-v+n-1}+\frac{Q_{\,1,n+1}}{u+v+n-1}\,\biggr)\,
(\ts H(u)\ot1\ts)
\end{gather*}
plus terms divisible on the left by\/ 
$Q_{\ts pq}\in(\End\CN)^{\ts\ot\ts (n+1)}$ with 
$2\le p<q\le n\,$. We also used the relations \eqref{rrr}.
The product of two sums in the brackets equals
\begin{gather*}
1-\,\ts\frac{P_{\,1,n+1}}{u-v+n-1}+\frac{Q_{\,1,n+1}}{u+v+n-1}
\\[6pt]
-\,\ts\frac{P_{\,2,n+1}+\ldots+P_{\,n,n+1}}{u-v}+
\frac{Q_{\,2,n+1}+\ldots+Q_{\,n,n+1}}{u+v+n-2}\,\ts+
\\[6pt]
\frac{(\ts P_{\,2,n+1}+\ldots+P_{\,n,n+1}\ts)\,P_{\,1,n+1}}
{(\ts u-v\ts)\,(\ts u-v+n-1\ts)}
-
\frac{(\ts Q_{\,2,n+1}+\ldots+Q_{\,n,n+1}\ts)\,P_{\,1,n+1}}
{(\ts u+v+n-2\ts)\,(\ts u-v+n-1\ts)}
\\[6pt]
-\,\ts\frac{(\ts P_{\,2,n+1}+\ldots+P_{\,n,n+1}\ts)\,Q_{\,1,n+1}}
{(\ts u-v\ts)\,(\ts u+v+n-1\ts)}
+
\frac{(\ts Q_{\,2,n+1}+\ldots+Q_{\,n,n+1}\ts)\,Q_{\,1,n+1}}
{(\ts u+v+n-2\ts)\,(\ts u+v+n-1\ts)}\ .
\end{gather*}

Multiplying the first fraction in the third line by
$H(u)\ot1$ on the right gives
\begin{equation}
\label{72}
-\,\ts
\frac{(n-1)\,P_{\,1,n+1}}
{(\ts u-v\ts)\,(\ts u-v+n-1\ts)}
\end{equation}
times $H(u)\ot1\,$.
Here we used the first equality from Lemma \ref{L1} and the relation
$$
P_{\,p,n+1}\,P_{\,1,n+1}=P_{\,1,n+1}\,P_{\,1\ts p}
\quad\text{where}\quad
p=2\lc n\,.
$$
Multiplying the second fraction in the third line by
$H(u)\ot1$ on the right gives just zero due
to the second equality from Lemma \ref{L1}
and to the relations
$$
Q_{\,p,n+1}\,P_{\,1,n+1}=P_{\,1,n+1}\,Q_{\,p\ts1}=
-\,\ts P_{\,1,n+1}\,Q_{\,1\ts p}
\quad\text{where}\quad
p=2\lc n\,.
$$
The numerator of the first fraction in the fourth line can be written as
the sum~of
$$
P_{\,p,n+1}\,Q_{\,1,n+1}=Q_{\,1\ts p}\,P_{\,p,n+1}
\quad\text{over}\quad
p=2\lc n\,.
$$
These summands evidently have the divisibility property stated in the lemma.
Multiplying the second fraction in the fourth line by
$H(u)\ot1$ on the right gives
\begin{equation}
\label{73}
-\ts\,
\frac{Q_{\,2,n+1}+\ldots+Q_{\,n,n+1}}
{(\ts u+v+n-2\ts)\,(\ts u+v+n-1\ts)}
\end{equation}
times $H(u)\ot1\,$. 
Here we used the first equality from Lemma \ref{L1} and the relation
$$
Q_{\,p,n+1}\,Q_{\,1,n+1}=Q_{\,p,n+1}\,P_{\,1p}
\quad\text{where}\quad
p=2\lc n\,.
$$
By adding to the first and the second lines the expressions \eqref{72}
and \eqref{73} instead of the third and the 
fourth lines and by collecting similar terms we get the sum in brackets
displayed in the lemma.
Hence we make the induction step.
\qed
\end{proof}


\refstepcounter{section}
\subsection*{\bf\thesection}
\label{18}

Set $n=N\,$. Thus the above introduced
functions $H(u)$ and $F\ts(u\,,\ns v)$ will now take values in 
$(\End\CN)^{\ts\ot\ts N}$ and $(\End\CN)^{\ts\ot\ts(N+1)}$ respectively.
Elements of the latter algebra act on vectors in 
$(\CN)^{\ts\ot\ts(N+1)}$ by the conventions \eqref{XY}~and~\eqref{xy}.
From now on we will be denoting $z=u-v$ and $w=u+v+N-1$ for short.

\begin{lemma}
\label{L4}
Applying $(\ts H(u)\ot1\ts)\,F\ts(u\,,\ns v)$ to the vector
$e_{\ts 1}\ot\ldots\ot e_{\ts N}\ot e_{\ts1}$ has the same effect as
applying $H(u)\ot1$ to the vector
\begin{equation}
\label{v4}
\frac{z-1}z\,\,
e_{\ts 1}\ot\ldots\ot e_{\ts N}\ot e_{\ts1}\,.
\end{equation}
\end{lemma}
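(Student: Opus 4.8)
The plan is to substitute the factorization supplied by Lemma \ref{L3} into the specific vector $e_{\ts1}\ot\ldots\ot e_{\ts N}\ot e_{\ts1}$ and to check that everything collapses to the scalar $(z-1)/z$. Write $V=(\ts H(u)\ot1\ts)(\ts e_{\ts1}\ot\ldots\ot e_{\ts N}\ot e_{\ts1}\ts)$. By the description of the action of $H(u)$ on vectors with only positive indices recalled in Section \ref{16},
\begin{equation*}
V=\Bigl(\ts\sum_{\si\in\Sg_N}(-1)^{\ts\si}\,e_{\ts\si(1)}\ot\ldots\ot e_{\ts\si(N)}\Bigr)\ot e_{\ts1}\,,
\end{equation*}
so every monomial occurring in $V$ carries only positive indices. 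Lemma \ref{L3} writes $(\ts H(u)\ot1\ts)\,F(u\,,\ns v)$ as the bracket
\begin{equation*}
\Bigl(\ts1-\frac{P_{\,1,N+1}+\ldots+P_{\,N,N+1}}{z}+\frac{Q_{\,1,N+1}+\ldots+Q_{\,N,N+1}}{w}\ts\Bigr)(\ts H(u)\ot1\ts)
\end{equation*}
plus terms divisible on the left by some $Q_{\ts pq}$ with $1\le p<q\le N$, and I would apply the two pieces to $e_{\ts1}\ot\ldots\ot e_{\ts N}\ot e_{\ts1}$ separately.

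First I would dispose of the remainder terms. Inspecting the proof of Lemma \ref{L3}, each of them carries $H(u)\ot1$ as its rightmost factor, so applied to $e_{\ts1}\ot\ldots\ot e_{\ts N}\ot e_{\ts1}$ it produces $Q_{\ts pq}\,W\,V$ for some product $W$ of operators $P_{\ts rs}$ and $Q_{\ts rs}$. Now $P$ sends a tensor of basis vectors with only positive indices to another such tensor, while $Q$ annihilates it, since $Q$ is nonzero on $e_{\ts a}\ot e_{\ts b}$ only when $b=-\ts a$; hence $W\ts V$ is either zero or again a combination of positive-index monomials. In either case $Q_{\ts pq}$ with $p\,,q\le N$ kills it, for the same reason. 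So every remainder term annihilates the vector, and only the displayed bracket can contribute.

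It remains to evaluate the bracket on $V$. Each $Q_{\ts p,N+1}$ acts on position $p\le N$, carrying a positive index, and on position $N+1$, carrying $e_{\ts1}$; as $Q$ requires opposite indices we get $Q_{\ts p,N+1}\ts V=0$ for all $p$, so the third summand drops out. The crux is the identity $\sum_{p=1}^{N}P_{\ts p,N+1}\ts V=V$. To prove it, observe that in the $\si$-th summand of $V$ the vector $e_{\ts1}$ occupies position $N+1$ and also position $\si^{-1}(1)$ among the first $N$: the term $p=\si^{-1}(1)$ returns that summand unchanged, reproducing $V$, while for $p\ne\si^{-1}(1)$ the swap yields a monomial in which $e_{\ts1}$ occupies two of the first $N$ positions. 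These remaining monomials cancel in pairs, since a monomial with $e_{\ts1}$ in positions $p\,,p'$ and $e_{\ts a}$ in position $N+1$ arises from exactly the two pairs $(\si\,,p)$ and $(\si'\,,p')$ with $\si'=\si\cdot(p\,p')$, hence with opposite signs (all indices here are even, so no further signs intervene). Thus the bracket multiplies $V$ by $1-1/z=(z-1)/z$, which is the assertion of the lemma.

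The main obstacle is this last combinatorial identity, together with the bookkeeping guaranteeing that the $Q_{\ts pq}$-divisible remainder really dies on the chosen vector. Both hinge on the single observation that all vectors in sight remain within the span of positive-index monomials, on which every $Q$ acting on positions $\le N$ is zero; isolating that invariant is what makes the otherwise lengthy action of $F(u\,,\ns v)$ tractable.
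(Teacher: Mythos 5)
Your proof is correct, but it is not the route the paper takes. You substitute the decomposition of Lemma \ref{L3} and then must dispose of its remainder terms; the paper's own proof of Lemma \ref{L4} bypasses Lemma \ref{L3} entirely. It first notes that every $Q_{\ts pq}$ with $1\le p<q\le N+1$ annihilates $e_{\ts 1}\ot\ldots\ot e_{\ts N}\ot e_{\ts1}$ (and the $P$'s preserve positive-index monomials), so that both $H(u)$ and $F\ts(u\,,\ns v)$ may be replaced by their $P$-only parts on this vector; it then invokes the known fusion identity of \cite[Section~2.3]{MNO} to collapse the ordered product of the factors $1-P_{\,p,N+1}/(u-v+N-p)$ into $1-(P_{\,1,N+1}+\ldots+P_{\,N,N+1})/z$ after the antisymmetrizer; finally it applies this bracket \emph{before} $H(u)\ot1$, so that each $P_{\,p,N+1}$ with $p\ge2$ produces a monomial with $e_{\ts1}$ repeated among the first $N$ slots, killed by antisymmetrization, while $P_{\,1,N+1}$ fixes the vector. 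Your version applies the bracket \emph{after} $H(u)\ot1$, which is why you need the pairwise cancellation over $(\si\ts,p)$ and $(\si\cdot(p\,p')\ts,p')$; that argument is valid (all indices involved are even, so no extra signs arise). One small imprecision: the inductive proof of Lemma \ref{L3} does not quite guarantee that every remainder term ends in $H(u)\ot1$ with a pure product of $P$'s and $Q$'s in between, since the remainders inherited at each induction step get multiplied on the right by further $R$-factors. What is true, and is all you actually need, is that each remainder is $Q_{\ts pq}$ (with $p<q\le N$) times an element of the subalgebra generated over the rational functions by the $P_{\ts rs}$ and $Q_{\ts rs}\,$; such an element keeps the vector inside the span of positive-index monomials, on which $Q_{\ts pq}$ vanishes --- exactly the invariant you isolate in your closing paragraph, so your argument survives. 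In exchange for the remainder bookkeeping, your proof is self-contained within the paper (it reuses Lemma \ref{L3} instead of citing the external identity from \cite{MNO}), whereas the paper's proof is shorter and postpones all use of Lemma \ref{L3} to the later sections where it is genuinely needed.
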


\begin{proof}
For $1\le p<q\le N+1$ the elements $Q_{\ts pq}\in(\End\CN)^{\ts\ot\ts(N+1)}$ 
annihilate the vector $e_{\ts 1}\ot\ldots\ot e_{\ts N}\ot e_{\ts1}\,$.
Hence applying $(\ts H(u)\ot1\ts)\,F\ts(u\,,\ns v)$ to this vector
has the same effect as applying
\begin{gather*}
\prod_{1\le p<q\le N}\,
\biggl(1+\frac{P_{\ts pq}}{p-q}\,\biggr)\cdot
\biggl(1-\frac{P_{\,1,N+1}}{u-v+N-1}\,\biggr)
\ts\ldots\ts
\biggl(1-\frac{P_{\,N,N+1}}{u-v}\,\biggr)=
\\
\prod_{1\le p<q\le N}\,
\biggl(1+\frac{P_{\ts pq}}{p-q}\,\biggr)\cdot
\biggl(\,1-\frac{P_{\,1,N+1}+\ldots+P_{\,N,N+1}}{u-v}\biggr)\,.
\end{gather*}
The last equality is well known, see for instance \cite[Section 2.3]{MNO}.
The summands $P_{\,2,N+1}\lc P_{\,N,N+1}$ of the last denominator
do not contribute to the image of the vector
$e_{\ts 1}\ot\ldots\ot e_{\ts N}\ot e_{\ts1}$ by the remark made just before stating Lemma~\ref{L2}. The summand $P_{\,1,N+1}$ does not change this vector.
Using the above mentioned remark once again we complete the proof of the lemma.
\qed
\end{proof}

\begin{lemma}
\label{L5}
Applying $(\ts H(u)\ot1\ts)\,F\ts(u\,,\ns v)$ to the vector
$e_{\ts 1}\ot\ldots\ot e_{\ts N}\ot e_{\ts-1}$ has~the same effect as
applying $H(u)\ot1$ to the linear combination
\begin{gather*}
\frac{w+1}w\,\,e_{\ts 1}\ot\ldots\ot e_{\ts N}\ot e_{\ts-1}
\\[0pt]
-\,\frac1z\,\,\sum_{k=1}^N\,\ts
e_{\ts1}\ot\ldots\ot e_{\ts k-1}
\ot e_{\ts-1}\ot 
e_{\ts k+1}\ot\ldots\ot e_{\ts N}\ot e_{\ts k}
\\[-5pt]
-\,\frac1w\,\,\sum_{k=1}^N\,\ts
e_{\ts-k}\ot e_{\ts2}\ot\ldots\ot e_{\ts N}\ot e_{\ts k}\,.
\end{gather*}
\end{lemma}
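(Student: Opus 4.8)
The plan is to evaluate $(H(u)\ot1)\,F\ts(u\,,\ns v)$ on $\xi=e_{\ts1}\ot\ldots\ot e_{\ts N}\ot e_{\ts-1}$ by first invoking Lemma \ref{L3} with $n=N$, which rewrites this operator as
\begin{equation*}
\biggl(1-\frac{P_{\,1,N+1}+\ldots+P_{\,N,N+1}}{z}+\frac{Q_{\,1,N+1}+\ldots+Q_{\,N,N+1}}{w}\biggr)(H(u)\ot1)
\end{equation*}
plus terms left-divisible by some $Q_{\ts pq}$ with $1\le p<q\le N\,$. The decisive difference from Lemma \ref{L4} is that the last tensor factor is now $e_{\ts-1}\,$, so neither the operators $Q_{\,p,N+1}$ nor the leftover terms annihilate the vector; these surviving contributions are the source of all the extra terms in the statement.

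First I would compute $(H(u)\ot1)\,\xi\,$. Since $e_{\ts1}\lc e_{\ts N}$ carry only positive indices, every $Q_{\ts pq}$ with $p<q\le N$ kills them, so by the antisymmetrizer description of $H(u)$ recalled just before Lemma \ref{L2} one gets $\sum_{\si\in\Sg_N}(-1)^{\ts\si}\,e_{\ts\si(1)}\ot\ldots\ot e_{\ts\si(N)}\ot e_{\ts-1}\,$. The identity term of the bracket reproduces this, while the swap operators $P_{\,k,N+1}$ move $e_{\ts-1}$ into slot $k$ and push $e_{\ts k}$ into the last slot; after using the antisymmetry built into $H(u)$ these account for the second displayed sum.

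The delicate part is the $Q_{\,k,N+1}$ terms of the bracket together with the leftover $Q_{\ts pq}$ terms. By \eqref{q} (equivalently \eqref{qimage}) one has $Q\ts(e_{\ts1}\ot e_{\ts-1})=\sum_i e_{\ts i}\ot e_{\ts-i}\,(-1)^{\ts\bi}\,$, so $Q_{\,k,N+1}$ acts nontrivially on each antisymmetrizer summand with $\si(k)=1$ and produces a whole family of vectors indexed by $i\,$. I would split the sum over $i$ into the diagonal part $i=1$, which contributes the extra $1/w$ making up the coefficient $(w+1)/w$ of the first term; the negative part $i=-1\lc-N$, which after reindexing gives the third displayed sum; and the spurious part $i=2\lc N$, which carries a repeated positive index. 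The leftover $Q_{\ts pq}$ terms, rewritten by means of the relations \eqref{R3}--\eqref{R8} and \eqref{pq} into products of $Q_{\,p,N+1}$ and $P_{\,p,N+1}$ standing to the right of $H(u)\ot1\,$, are exactly what cancels these spurious repeated-index terms, so that the total collapses to $(H(u)\ot1)$ applied to the stated three-term combination.

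The main obstacle is precisely this final bookkeeping: keeping track of the parity and Koszul signs generated when $Q$ acts across the intermediate tensor factors, and verifying that the repeated-index vectors coming from the $Q_{\,k,N+1}$ terms cancel against the leftover $Q_{\ts pq}$ contributions. Once that cancellation is in hand, the constant, the $P$-swap and the surviving $Q$-contributions reassemble, via the antisymmetry of $H(u)$, into the three terms of the lemma; everything apart from this cancellation is a routine application of the antisymmetrizer and of the swap action of $P\,$.
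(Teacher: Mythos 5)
Your route is genuinely different from the paper's, and as written it has a gap that is larger than the single cancellation you flag. The paper never invokes Lemma \ref{L3} here: it keeps $H(u)\ot1$ on the \emph{left} throughout, first dropping the $Q\ts$-parts of the factors $p=2\lc N$ of $F\ts(u\,,\ns v)$ (they annihilate every vector that arises), then collapsing the remaining product of $P\ts$-factors via the identity of \cite[Section~2.3]{MNO} together with Lemma \ref{L2}, and simplifying the cross terms, so that one is reduced to applying $(\ts H(u)\ot1)\bigl(1-\tfrac1z\,(P_{\,1,N+1}+\ldots+P_{\,N,N+1})+\tfrac1w\,Q_{\,1,N+1}\bigr)$ to the vector; the unwanted vectors $e_{\ts k}\ot e_{\ts2}\ot\ldots\ot e_{\ts N}\ot e_{\ts-k}$ with $k\ge2$ then disappear because $H(u)$, still standing on the left, annihilates repeated positive indices. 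Your plan inverts this: after Lemma \ref{L3} the factor $H(u)\ot1$ stands to the \emph{right} of the bracket, hence acts first, and nothing acts afterwards to kill the unwanted terms. Everything must instead cancel against the remainder of Lemma \ref{L3}, and that is where the proposal breaks down.

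Concretely, three families of $Q\ts$-correction terms would have to be absorbed by that remainder, not one. (a) The spurious repeated-index vectors from the parts $i=2\lc N$ of the $Q_{\,k,N+1}$-terms, which you do mention. (b) A discrepancy in your $P\ts$-step: for $k\ge2$ the vector $e_{\ts1}\ot\ldots\ot e_{\ts-1}\ot\ldots\ot e_{\ts N}$ (with $e_{\ts-1}$ in slot $k$) contains the pair $e_{\ts1},e_{\ts-1}$ among its first $N$ slots, so $Q_{\,1k}$ does not annihilate it; consequently $H(u)\ot1$ applied to the second displayed sum is \emph{not} the plain antisymmetrization that your swap computation produces. (c) The same problem for the third sum: $e_{\ts-k}\ot e_{\ts2}\ot\ldots\ot e_{\ts N}$ contains the pair $e_{\ts-k},e_{\ts k}$, so $(\ts H(u)\ot1)$ applied to it carries genuine $Q\ts$-corrections (this is exactly what the paper's Corollary \ref{C7} is needed for later); hence the negative part $i=-1\lc-N$ does \emph{not} ``after reindexing give the third displayed sum''. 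So your term-matching is false as an exact statement in items (b) and (c), and the cancellation in item (a) is asserted rather than proved. Worse, Lemma \ref{L3} only states that the remainder is divisible on the left by some $Q_{\ts pq}$ with $1\le p<q\le N$, without exhibiting it, and these remainder terms sit to the left of $H(u)\ot1$; since $H(u)$ is $N\ts!$ times an idempotent and not invertible, they cannot be moved to its right as you suggest. To verify any of these cancellations you would have to rerun the induction proving Lemma \ref{L3} to extract the remainder explicitly, and then establish a vector identity essentially as hard as Lemma \ref{L5} itself. The paper's arrangement, with $H(u)$ kept on the left, is precisely what makes all three families of corrections vanish for free.
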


\begin{proof}
Applying $F\ts(u\,,\ns v)$ to 
$e_{\ts 1}\ot\ldots\ot e_{\ts N}\ot e_{\ts-1}$ has the same effect as
applying
$$
R_{\,1,N+1\ts}(\ts u+N-1\,,\ns v\ts)\,
\biggl(\,1-\frac{P_{\,2,N+1}}{u-v+N-2}\,\biggr)
\ts\ldots\ts
\biggl(\,1-\frac{P_{\,N,N+1}}{u-v}\biggr)\,.
$$
The remark made before stating Lemma~\ref{L2}
remains valid if $n=N$ and one of the indices $i_{\ts1}\lc i_{\ts N}$
is $-\ts1$ while the other $N-1$ indices are $2\lc N$ in any order.
Hence when applying $(\ts H(u)\ot1\ts)\,F\ts(u\,,\ns v)$
to our vector we can 
replace 
$F\ts(u\,,\ns v)$~by
\begin{equation*}
R_{\,1,N+1\ts}(\ts u+N-1\,,\ns v\ts)\,
\biggl(\,1-\frac{P_{\,2,N+1}+\ldots+P_{\,N,N+1}}{u-v}\,\biggr)\,.
\end{equation*}
Here we also use Lemma \ref{L2} and the above mentioned result from
\cite[Section~2.3]{MNO}.

By simply opening the brackets the latter product equals
\begin{gather*}
1-\frac{P_{\,1,N+1}}{u-v+N-1}+\frac{Q_{\,1,N+1}}{u+v+N-1}
-\frac{\,P_{\,2,N+1}+\ldots+P_{\,N,N+1}}{u-v}\,\ts+
\\[6pt]
\frac{\,P_{\,1,N+1}\,(\,P_{\,2,N+1}+\ldots+P_{\,N,N+1}\ts)}
{(\ts u-v+N-1\ts)\,(\ts u-v\ts)}-
\frac{\,Q_{\,1,N+1}\,(\,P_{\,2,N+1}+\ldots+P_{\,N,N+1}\ts)}
{(\ts u+v+N-1\ts)\,(\ts u-v\ts)}\,.
\end{gather*}
The last two denominators are respectively equal to the sums
$$
P_{\,12}\,P_{\,1,N+1}+\ldots+P_{\,1N}\,P_{\,1,N+1}
\quad\text{and}\quad
P_{\,2,N+1}\,Q_{\,12}+\ldots+P_{\,N,N+1}\,Q_{\,1N}\,.
$$
The second of the two sums annihilates our vector.
When applying the first sum to our vector we can replace 
each of the factors $P_{\,12}\lc P_{\,1N}$ just by $-\ts 1$.
Here we again use Lemma \ref{L2}.
So when applying $(\ts H(u)\ot1\ts)\,F\ts(u\,,\ns v)$
to our vector we can replace the function $F\ts(u\,,\ns v)$
by the sum
\begin{gather*}
1-\frac{P_{\,1,N+1}}{u-v+N-1}+\frac{Q_{\,1,N+1}}{u+v+N-1}
-\frac{P_{\,2,N+1}+\ldots+P_{\,N,N+1}}{u-v}
\\[5pt]
-\,\frac{(N-1)\,P_{\,1,N+1}}
{(\ts u-v+N-1\ts)\,(\ts u-v\ts)}=
\\[5pt]
1-\frac{P_{\,1,N+1}+P_{\,2,N+1}+\ldots+P_{\,N,N+1}}{z}
+\frac{Q_{\,1,N+1}}{w}\ts\,.
\end{gather*}

Applying the expression in the last displayed line to 
$e_{\ts 1}\ot\ldots\ot e_{\ts N}\ot e_{\ts-1}$ gives
\begin{gather*}
e_{\ts 1}\ot\ldots\ot e_{\ts N}\ot e_{\ts-1}
-\,\frac1z\,\,\sum_{k=1}^N\,\ts
e_{\ts1}\ot\ldots\ot e_{\ts k-1}
\ot e_{\ts-1}\ot 
e_{\ts k+1}\ot\ldots\ot e_{\ts N}\ot e_{\ts k}
\\[-5pt]
-\,\frac1w\,\,\sum_{k=1}^N\,\ts
e_{\ts-k}\ot e_{\ts2}\ot\ldots\ot e_{\ts N}\ot e_{\ts k}
+\,\frac1w\,\,\sum_{k=1}^N\,\ts
e_{\ts k}\ot e_{\ts2}\ot\ldots\ot e_{\ts N}\ot e_{\ts-\ts k}\,.
\end{gather*}
By Lemma \ref{L2} applying $H(u)\ot1\ts$
to the last sum over $k=1\lc N$ annihilates all summands 
but one where $k=1\,$.
This observation completes the proof. 
\qed
\end{proof}


\vspace{-\baselineskip}
\refstepcounter{section}
\subsection*{\bf\thesection}
\label{19}

Introduce the rational function of $u\,,v$ with values in 
$(\End\CN)^{\ts\ot\ts (2\ts N+1)}$
$$
C\ts(u\,,\ns v)=F^{\,\prime}\ts(u\,,\ns v)\,
F^{\,\prime\prime}\ts(\ts1-u-N\,,\ns v)
$$
where $F^{\,\prime}\ts(u\,,\ns v)$ and 
$F^{\,\prime\prime}\ts(u\,,\ns v)$ are the images of $F\ts(u\,,\ns v)$
by the two embeddings
$$
(\End\CN)^{\ts\ot\ts (N+1)}\to(\End\CN)^{\ts\ot\ts (2\ts N+1)}
$$
defined as 
$\iota_{\,1}\ns\ot\ldots\ot\ts\iota_{\,N}\ot\ts\iota_{\,2\ts N+1}$ 
and as
$\iota_{\,N+1}\ns\ot\ldots\ot\ts\iota_{\,2\ts N}\ot\ts\iota_{\,2\ts N+1}$
respectively.
Using 
\eqref{ftt} twice we obtain that multiplying the formal power series 
in $u^{-1},v^{-1}$
$$
T_{\ts1}(u+N-1)\,\ldots\,T_{\ts N}(u)\,
T_{\ts N+1}(-\ts u)\,\ldots\,T_{\,2\ts N}(\ts1-N-u\ts)\,
T_{\,2\ts N+1}(v)
$$
by $C\ts(u\,,\ns v)\ot1$ on the left gives the same result as  
as multuplying by $C\ts(u\,,\ns v)\ot1$ on the right the series
$$
T_{\,2\ts N+1}(v)\,
T_{\ts1}(u+N-1)\,\ldots\,T_{\ts N}(u)\,
T_{\ts N+1}(-\ts u)\,\ldots\,T_{\,2\ts N}(\ts1-N-u\ts)\,.
$$
Both series have their coefficients in the algebra 
$(\End\CN)^{\ts\ot\ts(2\ts N+1)}\ot\XpN\,$.

\newpage

For any index $j=\pm\,1\lc\!\pm\ns N$ denote
\begin{equation}
\label{fj}
f_{\ts j}=
e_{\ts 1}\ot\ldots\ot e_{\ts N}\ot e_{\ts 1}\ot\ldots\ot e_{\ts N}\ot 
e_{\ts j}\,.
\end{equation}
Observe that replacing $u$ by $1-N-u$ maps $z=u-v$
to $1-N-u-v=-\,w\,$. Therefore by using Lemma \ref{L4} 
with that replacement of the variable $u$ and then~in 
the original formulation we immeditaly get the following corollary
to this lemma.

\begin{corollary}
\label{C5}
Applying $(\ts H(u)\ot H(\ts1-N-u\ts)\ot1\ts)\,C\ts(u\,,\ns v)$ to the vector
$f_{\ts 1}$ has the same effect 
as applying $H(u)\ot H(\ts1-N-u\ts)\ot1$ to the vector
\begin{equation}
\label{v5}
\frac{(z-1)\,(w+1)}{z\,w}\,f_{\ts 1}\,.
\end{equation}
\end{corollary}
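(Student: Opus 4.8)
The plan is to factor the action of $(\ts H(u)\ot H(\ts1-N-u\ts)\ot1\ts)\,C\ts(u\,,\ns v)$ on $f_{\ts1}$ into two successive applications of Lemma \ref{L4}, one for each of the two blocks of tensor factors underlying $C\ts(u\,,\ns v)\,$. First I would record the relevant commutativity. The operator $H(u)$ acts only on the factors $1\lc N$, whereas $F^{\,\prime\prime}\ts(\ts1-u-N\,,\ns v)$ acts on the factors $N+1\lc2N$ together with the factor $2N+1\,$; since these supports are disjoint, the two operators commute. Symmetrically, $H(\ts1-N-u\ts)$ acts on the factors $N+1\lc2N$ and so commutes with $F^{\,\prime}\ts(u\,,\ns v)$, which acts on the factors $1\lc N$ and $2N+1\,$. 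Finally $H(u)$ and $H(\ts1-N-u\ts)$ commute, their supports being disjoint as well.

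Using these relations I would commute $H(\ts1-N-u\ts)$ rightward past $F^{\,\prime}\ts(u\,,\ns v)$, so that it stands immediately to the left of $F^{\,\prime\prime}\ts(\ts1-u-N\,,\ns v)$, and then apply Lemma \ref{L4} to the second block. The restriction of $f_{\ts1}$ to the factors $N+1\lc2N$ and $2N+1$ is exactly the vector $e_{\ts1}\ot\ldots\ot e_{\ts N}\ot e_{\ts1}$ of that lemma, the remaining factors being spectators. Replacing $u$ by $1-N-u$ sends $z=u-v$ to $-\,w\,$, so the scalar $(z-1)/z$ of Lemma \ref{L4} becomes $(-\,w-1)/(-\,w)=(w+1)/w\,$. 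Hence applying $\bigl(H(\ts1-N-u\ts)\ot1\bigr)\,F^{\,\prime\prime}\ts(\ts1-u-N\,,\ns v)$ to $f_{\ts1}$ has the same effect as multiplying by $(w+1)/w$ and applying $H(\ts1-N-u\ts)\ot1\,$. The output is a scalar multiple of $f_{\ts1}$ itself, so in particular the shared factor $2N+1$ is returned to the state $e_{\ts1}\,$.

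I would then move the surviving operators and the scalar $(w+1)/w$ past $F^{\,\prime}\ts(u\,,\ns v)$, again using disjointness of supports, so that $F^{\,\prime}\ts(u\,,\ns v)$ is left acting on a vector whose factors $1\lc N$ read $e_{\ts1}\ot\ldots\ot e_{\ts N}$ and whose factor $2N+1$ is once more $e_{\ts1}\,$. This is precisely the hypothesis of Lemma \ref{L4} in its original form, with $z$ unchanged, and it contributes a second scalar $(z-1)/z\,$. Multiplying the two scalars yields $(z-1)(w+1)/(zw)$ times $\bigl(H(u)\ot H(\ts1-N-u\ts)\ot1\bigr)\,f_{\ts1}$, which is the assertion of the corollary.

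The one point demanding care is the bookkeeping of the shared factor $2N+1$, on which both $F^{\,\prime}$ and $F^{\,\prime\prime}$ act, so that the two invocations of Lemma \ref{L4} are not literally independent. What makes the argument work is that each invocation returns its block to a scalar multiple of its original state, in particular leaving $e_{\ts1}$ in the factor $2N+1\,$, so that the hypotheses of the other invocation stay intact. Beyond verifying this I expect no genuine obstacle, since every commutation used follows from disjoint supports.
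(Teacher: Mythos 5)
Your proof is correct and takes essentially the same route as the paper: Corollary \ref{C5} is deduced there by exactly this double application of Lemma \ref{L4}, first with $u$ replaced by $1-N-u$ (so that $z\mapsto -w$ and the scalar $(z-1)/z$ becomes $(w+1)/w$) and then in its original form, the commutations being justified by disjointness of tensor supports. The bookkeeping of the shared factor $2N+1$ that you single out is precisely what makes the paper's one-sentence deduction legitimate.
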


Below is the counterpart of Corollary \ref{C5}
for our Lemma \ref{L5} instead of Lemma~\ref{L4}.

\begin{corollary}
\label{C6}
Applying $(\ts H(u)\ot H(\ts1-N-u\ts)\ot1\ts)\,C\ts(u\,,\ns v)$ to 
$f_{\ts-1}$ has the same effect 
as applying $H(u)\ot H(\ts1-N-u\ts)\ot1$ to the linear combination
\begin{gather}
\notag
\hspace{-10pt}
\frac{(z-1)\,(w+1)}{z\,w}\,f_{\ts-1}
\\[0pt]
\notag
\hspace{-10pt}
-\,\ts\frac{z-1}{z^{\ts2}}\,\,\sum_{k=1}^N\,\ts
e_{\ts1}\ot\ldots\ot e_{\ts k-1}
\ot e_{\ts-1}\ot 
e_{\ts k+1}\ot\ldots\ot e_{\ts N}\ot 
e_{\ts 1}\ot\ldots\ot e_{\ts N}\ot e_{\ts k}
\\[-5pt]
\notag
\hspace{-10pt}
\qquad\qquad
-\,\ts\frac{z-1}{z\,w}\,\,\sum_{k=1}^N\,\ts
e_{\ts-k}\ot e_{\ts2}\ot\ldots\ot e_{\ts N}\ot 
e_{\ts 1}\ot\ldots\ot e_{\ts N}\ot e_{\ts k}\,\ts+
\\[-5pt]
\notag
\hspace{10pt}
\frac{z-1}{z\,w}\,\,\sum_{k=1}^N\,\ts
e_{\ts 1}\ot\ldots\ot e_{\ts N}\ot
e_{\ts1}\ot\ldots\ot e_{\ts k-1}
\ot e_{\ts-1}\ot 
e_{\ts k+1}\ot\ldots\ot e_{\ts N}\ot e_{\ts k}\,\ts+
\\[-5pt]
\label{fivel}
\hspace{36pt}
\frac{z-1}{z^{\ts2}}\,\,\sum_{k=1}^N\,\ts
e_{\ts 1}\ot\ldots\ot e_{\ts N}\ot
e_{\ts-k}\ot e_{\ts2}\ot\ldots\ot e_{\ts N}\ot e_{\ts k}\,.
\end{gather}
\end{corollary}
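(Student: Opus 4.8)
The plan is to imitate the derivation of Corollary \ref{C5}, replacing Lemma \ref{L4} by Lemma \ref{L5} at the appropriate spots. Recall that $C\ts(u\,,\ns v)=F^{\,\prime}(u\,,\ns v)\,F^{\,\prime\prime}(\ts1-u-N\,,\ns v)$, where $F^{\,\prime}$ acts on the tensor factors $1\lc N$ and $2\ts N+1\,$, while $F^{\,\prime\prime}$ acts on the factors $N+1\lc 2\ts N$ and $2\ts N+1\,$. Here $H(u)$ acts on the factors $1\lc N$ and $H(\ts1-N-u\ts)$ on the factors $N+1\lc 2\ts N\,$; each of these commutes with the $F$ that lives on the complementary block, since they act on disjoint tensor factors. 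Hence I can apply the two factors of $C\ts(u\,,\ns v)$ to $f_{\ts-1}$ in two stages, first $F^{\,\prime\prime}(\ts1-u-N\,,\ns v)$ and then $F^{\,\prime}(u\,,\ns v)$, interleaving the relevant $H$ at each stage.

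In the first stage the factors $N+1\lc 2\ts N,2\ts N+1$ of $f_{\ts-1}$ carry the vector $e_{\ts1}\ot\ldots\ot e_{\ts N}\ot e_{\ts-1}\,$, so Lemma \ref{L5} applies verbatim after the substitution $u\mapsto 1-N-u\,$. Under this substitution one has $z\mapsto -\,w$ and $w\mapsto -\,z\,$, whence the three coefficients $\tfrac{w+1}w\ts,-\tfrac1z\ts,-\tfrac1w$ of Lemma \ref{L5} turn into $\tfrac{z-1}z\ts,\tfrac1w\ts,\tfrac1z\,$. Thus, modulo $H(\ts1-N-u\ts)$ on the factors $N+1\lc 2\ts N\,$, applying $F^{\,\prime\prime}(\ts1-u-N\,,\ns v)$ to $f_{\ts-1}$ replaces it by the sum of $\tfrac{z-1}z\,f_{\ts-1}\,$, of $\tfrac1w$ times $\sum_k$ of the vectors having $e_{\ts-1}$ in the $k\ts$-th slot of the second block and $e_{\ts k}$ in the last slot, and of $\tfrac1z$ times $\sum_k$ of the vectors having $e_{\ts-k}$ in the first slot of the second block and $e_{\ts k}$ in the last slot. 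In all of these the first block still carries $e_{\ts1}\ot\ldots\ot e_{\ts N}\,$.

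In the second stage I apply $F^{\,\prime}(u\,,\ns v)$ to each of the three groups. For the first group the factors $1\lc N,2\ts N+1$ carry $e_{\ts1}\ot\ldots\ot e_{\ts N}\ot e_{\ts-1}\,$, so Lemma \ref{L5} applies again and, after multiplication by the prefactor $\tfrac{z-1}z\,$, produces precisely the first three lines of \eqref{fivel}. For the other two groups the last slot carries a \emph{positive} index $e_{\ts k}$ with $k\in\{\ts1\lc N\ts\}\,$, and here I use a mild extension of Lemma \ref{L4}\ts: for every such $k\,$, applying $(\ts H(u)\ot1\ts)\,F\ts(u\,,\ns v)$ to $e_{\ts1}\ot\ldots\ot e_{\ts N}\ot e_{\ts k}$ has the same effect as applying $H(u)\ot1$ to $\tfrac{z-1}z\,e_{\ts1}\ot\ldots\ot e_{\ts N}\ot e_{\ts k}\,$. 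This is proved exactly as Lemma \ref{L4}\ts: since all indices are positive every $Q_{\ts p,N+1}$ annihilates the vector, so $F\ts(u\,,\ns v)$ reduces to $1-(\ts P_{\,1,N+1}+\ldots+P_{\,N,N+1}\ts)/(u-v)$ after multiplication by $H(u)\,$, and among the summands $P_{\ts p,N+1}$ only the term $p=k$ survives the antisymmetrization by $H(u)\,$, acting there as the identity.

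Applying this extension to the second and third groups multiplies them by $\tfrac{z-1}z$ and yields the last two lines of \eqref{fivel}\ts; assembling the five contributions with their stage-one prefactors $\tfrac{z-1}z\ts,\tfrac1w\ts,\tfrac1z$ reproduces \eqref{fivel} exactly. The only point requiring care is bookkeeping\ts: keeping track of which block each displaced index $e_{\ts-1}$ or $e_{\ts-k}$ occupies, and verifying that the sign replacement $z\leftrightarrow-\,w\,$, $w\leftrightarrow-\,z$ is the sole change introduced in passing from Lemma \ref{L4} to the first stage. No genuinely new sign subtleties arise, because both stages invoke only Lemmas \ref{L4} and \ref{L5}, and the displaced entries always sit in even (positive-index) or already-accounted-for slots.
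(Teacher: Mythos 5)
Your proposal is correct and follows essentially the same route as the paper's own proof: first apply $F^{\,\prime\prime}(\ts1-u-N\,,\ns v)$ using Lemma \ref{L5} with $u$ replaced by $1-N-u$ (so that $z\mapsto-\,w$ and $w\mapsto-\,z$), then apply $F^{\,\prime}(u\,,\ns v)$, treating the leading term by Lemma \ref{L5} in its original form and the two remaining sums by the positive-index argument. Your ``mild extension of Lemma \ref{L4}'' is precisely the inline reduction the paper performs there, replacing $F^{\,\prime}(u\,,\ns v)$ by $1-(P_{\,1,2N+1}+\ldots+P_{\,N,2N+1})/(u-v)$ via the remark preceding Lemma \ref{L2} and the identity from \cite[Section~2.3]{MNO}, with only the summand $P_{\,k,2N+1}$ surviving the antisymmetrization.
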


\begin{proof}
By Lemma \ref{L5} with the variable 
$u$ replaced by $1-N-u$ we immediately obtain that applying
$(\ts 1\ot H(\ts1-N-u\ts)\ot1\ts)\,F^{\,\prime\prime}\ts(\ts1-u-N\,,\ns v)$ 
to the vector $f_{\ts-1}$ has the same effect 
as applying $1\ot H(\ts1-N-u\ts)\ot1$
to the linear combination 
\begin{gather}
\notag
\frac{z-1}z\,f_{\ts-1}\,\ts+
\\[0pt]
\notag
\frac1w\,\,\sum_{k=1}^N\,\ts
e_{\ts 1}\ot\ldots\ot e_{\ts N}\ot
e_{\ts1}\ot\ldots\ot e_{\ts k-1}
\ot e_{\ts-1}\ot 
e_{\ts k+1}\ot\ldots\ot e_{\ts N}\ot e_{\ts k}\,\ts+
\\[-5pt]
\label{threelines}
\frac1z\,\,\sum_{k=1}^N\,\ts
e_{\ts 1}\ot\ldots\ot e_{\ts N}\ot
e_{\ts-k}\ot e_{\ts2}\ot\ldots\ot e_{\ts N}\ot e_{\ts k}\,.
\end{gather}

Further, 
by the original Lemma \ref{L5} applying
$(\ts H(u)\ot1\ot1\ts)\,F^{\,\prime}\ts(u\,,\ns v)$
to the first of three lines \eqref{threelines}
has the same effect as applying $H(u)\ot1\ot1$
to the sum of first three lines in \eqref{fivel}.
Further, applying $(\ts H(u)\ot1\ot1\ts)\,F^{\,\prime}\ts(u\,,\ns v)$
to the second and to the third lines of \eqref{threelines}
has the same effect as applying the product
\begin{equation*}
(\ts H(u)\ot1\ot1\ts)\,
\biggl(\,1-\frac{P_{\,1,2\ts N+1}}{u-v+N-1}\,\biggr)
\ts\ldots\ts
\biggl(\,1-\frac{P_{\,N,2\ts N+1}}{u-v}\biggr)\,.
\end{equation*}

Applying the last displayed product
to the second and third lines of \eqref{threelines} gives
the same result as applying the product
\begin{equation}
\label{numerat}
(\ts H(u)\ot1\ot1\ts)\,
\biggl(
\,1-\frac{P_{\,1,2\ts N+1}+\ldots+P_{\,N,2\ts N+1}}{u-v}
\biggr)\,.
\end{equation}
Here we used Lemma \ref{L2} like in the beginning of our proof of Lemma \ref{L5}.
Let us~now use the remark made just before stating Lemma \ref{L2}.
When applying \eqref{numerat}
to the summands with the index $k$ in the second and third lines of
\eqref{threelines}, 
all summands of the numerator in \eqref{numerat}
vanish except for $P_{\,k\ts,\ts2\ts N+1}\,$.
The latter does not change the summands with the index $k$
in the second and third lines of
\eqref{threelines}. So applying 
$(\ts H(u)\ot1\ot1\ts)\,F^{\,\prime}\ts(u\,,\ns v)$
to the second and the third lines of \eqref{threelines}
gives the same as applying $H(u)\ot1\ot1$ respectively
to the fourth and the fifth lines of \eqref{fivel}.~\qed
\end{proof}


\vspace{-\baselineskip}
\refstepcounter{section}
\subsection*{\bf\thesection}
\label{20}
So far we used the equality in Lemma \ref{L2} only in the 
cases when both sides~of the equality annihilate a given vector. 
In the next section we will use that equality in more general cases.
We will employ the following corollary to Lemma~\ref{L2}. Denote
\begin{equation*}
d_{\ts k}=
e_{\ts1}\ot\ldots\ot e_{\ts k-1}\ot e_{\ts k+1}\ot\ldots\ot e_{\ts N}
\quad\text{for}\quad
k=1\lc N\ts.
\end{equation*}

\begin{corollary}
\label{C7}
For any $k>1$ applying $H(u)$ to the vector\/
$e_{\ts-k}\ot e_{\ts2}\ot\ldots\ot e_{\ts N}$
has the same effect as applying $H(u)$ to the linear combination
\begin{gather*}
a(u)\,d_{\ts k}\ot e_{\ts-1}\,(-1)^{\ts N+k}-\ts
b\ts(u)\,d_{\ts 1}\ot e_{\ts-k}\,(-1)^{\ts N}
\end{gather*}
where
$$
a(u)=\frac{2}{\ts2u+N+1\ts}
\quad\text{and}\quad
b\ts(u)=\frac{2u+N-1}{\ts2u+N+1\ts}\,\,.
$$
Similarly, 
for any index $k>1$ applying $H(\ts1-N-u\ts)$ to the vector
$$
e_{\ts1}\ot\ldots\ot e_{\ts k-1}\ot e_{\ts-1}\ot 
e_{\ts k+1}\ot\ldots\ot e_{\ts N}
$$
has the same effect as applying $H(\ts1-N-u\ts)$ to the linear combination
\begin{gather*}
a(u)\,e_{\ts-k}\ot d_{\ts 1}-
b\ts(u)\,e_{\ts-1}\ot d_{\ts k}\,(-1)^{\ts k}\,.
\end{gather*}
\end{corollary}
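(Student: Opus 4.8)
The plan is to derive both statements from Lemma \ref{L2} alone, used as a right--multiplication rule. Dividing that lemma by its scalar, for $1\le p<N$ and any $x\in(\CN)^{\ot N}$ one gets
\[
H(u)\,x=-\,H(u)\,(P_{\ts p,p+1}\,x)+\frac{1}{\,2u+2N-2p-1\,}\;H(u)\,(Q_{\ts p,p+1}\,x)\,.
\]
Together with the two identities $P_{\ts pq}\,H(u)=-\,H(u)$ and $Q_{\ts pq}\,H(u)=0$ of Lemma \ref{L1}, this is the whole toolkit.

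Starting from $x=e_{\ts-k}\ot e_{\ts2}\ot\ldots\ot e_{\ts N}$ I would move the factor $e_{\ts-k}$ one step to the right at a time by the displayed identity. Since $Q\,(e_{\ts a}\ot e_{\ts b})$ vanishes unless $b=-a$, every swap that brings $e_{\ts-k}$ past a factor $e_{\ts j}$ with $j\ne k$ has a vanishing $Q$--term and only transposes the two factors up to sign; these steps merely reflect the antisymmetry of $H(u)\,x$. The partner $e_{\ts k}$ is met exactly once, at positions $k-1,k$, and there the term $Q_{\ts k-1,k}\,x'$ is active: it replaces the pair $e_{\ts-k}\ot e_{\ts k}$ by the image \eqref{qimage}, the sum $\sum_i e_{\ts i}\ot e_{\ts-i}\,(-1)^{\bi}$. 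The surviving $P$--term meanwhile continues to carry $e_{\ts-k}$ to the last slot and produces, with total sign $(-1)^{\ts N-1}$, the flavour $d_{\ts1}\ot e_{\ts-k}$.

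It then remains to evaluate $H(u)$ on the $Q$--sum. Here I would use that $H(u)\,x$ is annihilated by each $Q_{\ts pq}$ and reversed by each $P_{\ts pq}$; in particular, where all indices are positive $H(u)$ acts by the antisymmetrisation formula recalled in Section \ref{16}, so it kills every tensor with a repeated positive index, while the summands still carrying a negative index are pushed to the last slot by a further use of Lemma \ref{L2}. After these cancellations only the two flavours $d_{\ts k}\ot e_{\ts-1}$ and $d_{\ts1}\ot e_{\ts-k}$ remain, and collecting their scalar coefficients gives the assertion; the coefficients satisfy $a(u)+b(u)=1$, and the denominator $2u+N+1$ emerges only after combining the various constants $2u+2N-2p-1$ met along the way.

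For the second identity I would run the same argument with $H(\ts1-N-u\ts)$ in place of $H(u)$, now sliding the factor $e_{\ts-1}$ leftwards from position $k$ to the front until it reaches its partner $e_{\ts1}$; the substitution $u\mapsto1-N-u$ sends each constant of Lemma \ref{L2} to $1-2u-2p$, and the identical bookkeeping reproduces the same $a(u),b(u)$. (Equivalently one may deduce it from the first identity by reversing the order of the $N$ tensor factors, an operation that sends $H(u)$ to $H(\ts1-N-u\ts)$, as \eqref{pq} shows.) The main obstacle is precisely this middle step: the single $Q$--contraction spreads over all $2N$ indices $i$, and the real work is to verify, using the antisymmetry and trace--free properties of Lemmas \ref{L1} and \ref{L2}, that all but the two stated flavours are annihilated or reabsorbed, while keeping the signs $(-1)^{\ts N+k}$ and $(-1)^{\ts N}$ exact.
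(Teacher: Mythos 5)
Your opening moves are sound: the rearrangement of Lemma \ref{L2} into
$H(u)\,x=-\,H(u)\,(P_{\ts p,p+1}\,x)+\tfrac{1}{2u+2N-2p-1}\,H(u)\,(Q_{\ts p,p+1}\,x)$
is correct, the swaps of $e_{\ts-k}$ past $e_{\ts j}$ with $j\ne k$ do have vanishing $Q$\ts-terms, and you correctly isolate the single active $Q$\ts-contraction at positions $(k-1,k)$. The gap sits exactly where you put ``the real work''. Your claim that $H(u)$ ``kills every tensor with a repeated positive index'' is false in this situation: the antisymmetrisation formula of Section \ref{16} applies only to tensors \emph{all} of whose indices are positive, and every summand of the $Q$\ts-image carries one negative factor $e_{\ts-a}\,$. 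The adjacent-swap argument does prove vanishing when the two copies of $e_{\ts a}$ are separated only by harmless factors, but for $a=k+1\lc N$ the summand
$e_{\ts2}\ot\ldots\ot e_{\ts k-1}\ot e_{\ts a}\ot e_{\ts-a}\ot e_{\ts k+1}\ot\ldots\ot e_{\ts N}$
has $e_{\ts-a}$ trapped \emph{between} the two copies of $e_{\ts a}\,$, so no sequence of $Q$\ts-vanishing swaps brings them together. These summands are not annihilated. Writing $y_{\ts a}$ for $H(u)$ applied to the summand with the pair $e_{\ts a}\ot e_{\ts-a}$ at positions $(k-1,k)$, $\Sigma$ for $H(u)$ applied to the whole $Q$\ts-image, and $c=2u+2N-2k+1$ for the constant of Lemma \ref{L2} at $p=k-1\,$, one finds $y_{\ts a}=\Sigma/c\neq0$ for $a>k$ (it is the partner terms $y_{\ts-a}$ with $a>k$ that vanish, and the pattern is the opposite for $a<k$). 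If you run your bookkeeping with the cancellation pattern as you state it, the final denominator comes out as the $k$\ts-dependent quantity $2u+3N-2k+1$ rather than $2u+N+1\,$, so the error is not cosmetic.

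Your fallback recipe for the surviving negative-index terms --- ``pushed to the last slot by a further use of Lemma \ref{L2}'' --- also does not terminate: pushing $e_{\ts-a}$ past its partner $e_{\ts a}$ re-activates the $Q$\ts-term and spawns another full copy of the $Q$\ts-sum. What is actually needed is to treat the quantities $y_{\ts\pm a}$ as unknowns and apply Lemma \ref{L2} once more at the fixed pair of positions, which yields the linear system $y_{\ts a}+y_{\ts-a}=\Sigma/c$ for every $a\,$; combined with the genuine vanishings this solves to $\Sigma/c=a(u)\,(\ts y_{\ts1}+y_{\ts k})$ because $c-N+2k=2u+N+1\,$, and only then does your sign bookkeeping reproduce the stated combination with $a(u)+b\ts(u)=1\,$. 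This is precisely the paper's mechanism: it introduces the unknowns $x_{\ts\pm i}(u)\,$, derives the system \eqref{lineq} from Lemma \ref{L2}, and solves it --- with the simplification that the contracted pair is placed at positions $(N-1,N)\,$, where the repeated-index terms vanish uniformly and no $a\lessgtr k$ case split arises. So your approach can be completed, but as written it rests on an incorrect cancellation claim and is missing the linear-system step that is the actual content of the proof.
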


\begin{proof}
For any index $i$ denote by $x_{\ts i}(u)$ the result
of applying $H(u)$ to the vector 
\begin{equation}
\label{xvector}
e_{\ts2}\ot\ldots\ot e_{\ts k-1}\ot e_{\ts k+1}\ot\ldots\ot e_{\ts N}
\ot e_{\ts i}\ot e_{\ts -\ts i}\,.
\end{equation}
By Lemma \ref{L2} applying $H(u)$ to the vector
$e_{\ts-k}\ot e_{\ts2}\ot\ldots\ot e_{\ts N}$
gives the vector $(-1)^{\ts k}\,x_{\ts-\ts k}(u)\,$.
Let us apply both sides of the equality in Lemma \ref{L2}
with $n=N$ and $p=N-1$ to the vectors \eqref{xvector} with $i=1\lc N\ts$. 
We obtain the equations
\begin{equation}
\label{lineq}
x_{\ts 1}(u)-x_{\ts-1}(u)+\ldots+x_{\ts N}(u)-x_{\ts-\ts N}(u)=
(2u+1)\,(\,x_{\ts i}(u)+x_{\ts -\ts i}(u)\ts)\,.
\end{equation}
But for $i=2\lc k-1\ts,k+1\ts\lc N$ we have
$x_{\ts i}(u)=0$ by Lemma~\ref{L2}.
So by \eqref{lineq} the vectors $x_{\ts -\ts i}(u)$
with $i=2\lc k-1\ts,k+1\ts\lc N$ are equal to each~other.
Using these two observations, it is easy to deduce from the
equations \eqref{lineq} that 
$$
x_{\ts-k}(u)=
a(u)\,x_{\ts1}(u)-b\ts(u)\,x_{\ts k}(u)\,.
$$
By using Lemma \ref{L2} again we now obtain the first statement of
our corollary.
 
Changing the notation, 
let $x_{\ts i}(u)$ be
the result
of applying $H(\ts1-N-u\ts)$ to 
\begin{equation}
\label{yvector}
e_{\ts -\ts i}\ot e_{\ts i}\ot e_{\ts 2}\ot\ldots\ot e_{\ts k-1}\ot 
e_{\ts k+1}\ot\ldots\ot e_{\ts N}\,.
\end{equation}
By Lemma \ref{L2} applying $H(\ts1-N-u\ts)$ to\/ 
$
e_{\ts1}\ot\ldots\ot e_{\ts k-1}\ot e_{\ts-1}\ot 
e_{\ts k+1}\ot\ldots\ot e_{\ts N}
$
gives the vector $(-1)^{\ts k}\,x_{\ts-\ts 1}(u)\,$. 
Let us replace 
$u$ by $1-N-u$ in the equality in Lemma~\ref{L2}
and apply both sides of the resulting equality with $n=N$ and $p=1$ 
to the vectors \eqref{yvector} with $i=1\lc N\,$.
In this way we obtain the same equations \eqref{lineq}
although for the changed vectors.
For $i=2\lc k-1\ts,k+1\ts\lc N$ we still have
$x_{\ts i}(u)=0$ by Lemma~\ref{L2}. So
the equations \eqref{lineq} easily imply that 
$$
x_{\ts-1}(u)=
a(u)\,x_{\ts k}(u)-b\ts(u)\,x_{\ts 1}(u)\,.
$$
By using Lemma \ref{L2} again we now get the second statement of
the corollary.
\qed
\end{proof}


\vspace{-\baselineskip}
\refstepcounter{section}
\subsection*{\bf\thesection}
\label{21}

Let us relate two vectors in $(\CN)^{\ts\ot\ts(2\ts N+1)}(u\,,v)$
by the symbol $\sim$ to each other if 
their difference vanishes under the action of
$H(u)\ot H(1-N-u)\ot1\,$, 
or lies in the image of the action of $Q_{\ts N,\ts N+1}\,$.
Extend the relation $\sim$ transitively.
Recall the notations $z=u-v$ and $w=u+v+N-1$ adopted in Section \ref{18}.

\begin{proposition}
\label{P8}
For any index\/ $j=\pm\,1\lc\!\pm\ns N$ we have the relation
$$
C\ts(u\,,\ns v)\,f_{\ts j}\,\sim\,\frac{(z-1)\,(w+1)}{z\,w}\,f_{\ts j}\,.
$$
\end{proposition}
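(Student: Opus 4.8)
The plan is to deduce the relation for all $j$ from the two computations already recorded in Corollaries \ref{C5} and \ref{C6}, using the relabeling symmetry of the whole construction to pass from $j=\pm1$ to general $j$. Throughout write $z=u-v$ and $w=u+v+N-1$ as in Section \ref{18}, so that the scalar in question is $\frac{(z-1)(w+1)}{z\,w}$. For $\si\in\Sg_N$, extended to negative indices by $\si(-k)=-\si(k)$, the relabeling $e_{\ts i}\mapsto e_{\ts\si(i)}$ induces an automorphism of each tensor factor of $(\End\CN)^{\ts\ot\ts(2N+1)}$. Since $\si$ preserves the $\ZZ_2$-grading it fixes both $P$ and $Q$, hence also $R\ts(u\,,\ns v)$, the functions $H(u)$ and $H(\ts1-N-u\ts)$, the operator $Q_{\ts N,\ts N+1}$, and the function $C\ts(u\,,\ns v)$. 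By interchanging the indices $1$ and $k$ in this way the computation for $j=\pm k$ becomes the computation for $j=\pm1$ with $1$ replaced by $k$ throughout; one checks that every step remains valid verbatim. It therefore suffices to treat $j=1$ and $j=-1$. The case $j=1$ is exactly Corollary \ref{C5}.

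The substantial case is $j=-1$. By Corollary \ref{C6}, applying $H(u)\ot H(\ts1-N-u\ts)\ot1$ to $C\ts(u\,,\ns v)\,f_{\ts-1}$ has the same effect as applying it to $\frac{(z-1)(w+1)}{z\,w}\,f_{\ts-1}$ plus the four sums over $k=1\lc N$ displayed in \eqref{fivel}. Thus the entire content of the proposition in this case is the assertion that those four sums are $\sim0$. This is where I expect the real work, and the main obstacle, to lie.

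To carry this out I would rewrite the block vectors occurring in \eqref{fivel} by means of Corollary \ref{C7}. The sum whose block-one factor is $e_{\ts-k}\ot e_{\ts2}\ot\ldots\ot e_{\ts N}$ is transformed under $H(u)$ by the first statement of that corollary, and the sum whose block-two factor is $e_{\ts1}\ot\ldots\ot e_{\ts k-1}\ot e_{\ts-1}\ot e_{\ts k+1}\ot\ldots\ot e_{\ts N}$ by its second statement; the remaining two sums are first brought to these shapes by transporting the displaced vector $e_{\ts-1}$ or $e_{\ts-k}$ to the end of its block by Lemma \ref{L2}, at the cost of an explicit sign. After these substitutions the coefficients $a(u)$ and $b\ts(u)$ of Corollary \ref{C7} combine with the prefactors $\frac{z-1}{z^{\ts2}}$ and $\frac{z-1}{z\,w}$, and I expect the resulting terms to fall into two families: those which, after summation over $k$, carry in the adjacent positions $N$ and $N+1$ the pattern $\sum_i e_{\ts i}\ot e_{\,-\ts i}\,{(-1)}^{\,\bi}$ and hence lie in the image of $Q_{\ts N,\ts N+1}$, and those which cancel against one another. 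The hard part will be precisely this accounting: tracking the signs produced by Lemma \ref{L2} and verifying that the four rational prefactors conspire so that everything not absorbed into the image of $Q_{\ts N,\ts N+1}$ cancels exactly. Once the four sums are seen to be $\sim0$ the proposition follows for $j=-1$, and then for every $j$ by the symmetry of the first paragraph.
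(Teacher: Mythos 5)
Your strategy is the paper's: reduce to $j=\pm1$ by permutation symmetry, settle $j=1$ by Corollary \ref{C5}, and for $j=-1$ show the four sums in \eqref{fivel} are $\sim0$ using Lemma \ref{L2}, Corollary \ref{C7} and the image of $Q_{\ts N,\ts N+1}$. But the proposal has a genuine gap: the case $j=-1$, which you correctly call the substantial one, is only planned, never executed, and the plan as stated has the details scrambled. What makes the accounting work in the paper is that the four sums pair off with equal and opposite rational prefactors: the second line of \eqref{fivel} pairs with the fifth (prefactors $\mp\,(z-1)/z^{\ts2}$) and the third with the fourth (prefactors $\mp\,(z-1)/(z\,w)$), so within each pair the rational functions drop out and the claim becomes purely vectorial: for each fixed $k$ the signed sum of the two tensor vectors is $\sim$ a vector carrying \eqref{qimage} in positions $N,N+1$, hence lies in $\im\ts Q_{\ts N,\ts N+1}$. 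The second/fifth pair needs only Lemma \ref{L2}; Corollary \ref{C7} enters only for the third/fourth pair (for $k>1$; for $k=1$ one instead uses $a(u)+b\ts(u)=1$). Your plan proposes the opposite assignment, namely to massage the second and fifth lines into shapes where Corollary \ref{C7} applies --- but it cannot apply to them: in the second line the displaced $e_{\ts-1}$ sits in the block acted on by $H(u)$, whereas the statement of Corollary \ref{C7} concerning that shape is about $H(\ts1-N-u\ts)$, and symmetrically for the fifth line. So beyond being incomplete, the outlined route stalls at exactly the point you flag as the hard part; the pairing observation, which is the actual content of the paper's proof, is missing.

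A second, smaller gap is in your reduction to $j=\pm1$. The relabeling $1\leftrightarrow k$ does fix $C\ts(u\,,\ns v)$, $H(u)\ot H(\ts1-N-u\ts)$ and $Q_{\ts N,\ts N+1}$, but it does not send $f_{\pm1}$ to $f_{\pm k}$: the first $2\ts N$ tensor factors come out permuted. To conclude you need the two observations the paper makes: (i) by Lemma \ref{L2} the image of \eqref{onedim} under $H(u)\ot H(\ts1-N-u\ts)$ is unchanged by a simultaneous permutation of the two blocks (the two signs cancel), so the relabeled vector is equivalent to $f_{\pm k}$; and (ii) the relation $\sim$ is compatible with left multiplication by $C\ts(u\,,\ns v)$, because $(\ts H(u)\ot H(\ts1-N-u\ts)\ot1\ts)\,C\ts(u\,,\ns v)$ is divisible on the right by $H(u)\ot H(\ts1-N-u\ts)\ot1$ (the $n=N$ case of the computation opening the proof of Lemma \ref{L3}). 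Without (ii) the relation you obtain for the relabeled vector cannot be converted into the stated relation for $f_{\pm k}$ itself, so the phrase ``every step remains valid verbatim'' hides a step that genuinely has to be checked.
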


\begin{proof}
Proposition \ref{P8} follows from its
particular cases of $j=1\,,-\ts1\,$. Indeed, the sums in 
\eqref{p} and \eqref{q} are invariant under any permutation of
the indices $1\ts\lc N$ with the corresponding permutation of 
$-\ts1\,,\ldots,-\ts N$ at the same time. 
So $H(u)$ and $C\ts(u\,,\ns v)$
are also invariant under any such permutation. While the vector
\begin{equation}
\label{onedim}
e_{\ts 1}\ot\ldots\ot e_{\ts N}\ot e_{\ts 1}\ot\ldots\ot e_{\ts N}
\end{equation}
changes under the permutation,
its image by the action of $H(u)\ot H(1-N-u)$ does not change
due to Lemma \ref{L2}. Finally, note that 
the product of $C\ts(u\,,\ns v)$ by 
$H(u)\ot H(\ts1-N-u\ts)\ot1$ on the left is also divisible by
this factor on the right. This remark follows 
by setting $n=N$ in the beginning of the proof of Lemma~\ref{L3}.

For $j=1$ our proposition immediately follows from
Corollary \ref{C5}. For $j=-\ts1$ we will derive
the proposition from Corollary \ref{C6}. We will show that
the summands with the index $k$ in the second and
fifth lines of \eqref{fivel} cancel each other by $\sim$
and so do the summands with the index $k$ in the third and
fourth lines of \eqref{fivel}.

Consider the second and the fifth lines of \eqref{fivel}. 
For $k=1\lc N$ by Lemma~\ref{L2} 
\begin{gather*}
-\ts\,e_{\ts1}\ot\ldots\ot e_{\ts k-1}
\ot e_{\ts-1}\ot 
e_{\ts k+1}\ot\ldots\ot e_{\ts N}\ot 
e_{\ts 1}\ot\ldots\ot e_{\ts N}\ot e_{\ts k}\ts\,+
\\[8pt]
e_{\ts 1}\ot\ldots\ot e_{\ts N}\ot
e_{\ts-k}\ot e_{\ts2}\ot\ldots\ot e_{\ts N}\ot e_{\ts k}\,\sim
\\[6pt]
-\ts\,d_{\ts k}\ot e_{\ts-1}\ot e_{\ts 1}\ot 
d_{\ts 1}\ot e_{\ts k}\,(-1)^{\ts N+k}+
d_{\ts k}\ot e_{\ts k}\ot e_{\ts-k}\ot d_{\ts 1}\ot e_{\ts k}\,
(-1)^{\ts N+k}\ts\,\sim
\\[6pt]
d_{\ts k}\ot\ts x\ts\ot d_{\ts 1}\ot e_{\ts k}\,(-1)^{\ts N+k}
\end{gather*}
where $x$ is the vector~\eqref{qimage}.
So the last displayed line lies in the image of $Q_{\ts N,\ts N+1}\,$.

\newpage

Consider the third and the fourth lines of \eqref{fivel}. 
For their summands with the index $k=1$
by Lemma \ref{L2} we get the relations
\begin{gather*}
-\ts\,e_{\ts-1}\ot e_{\ts2}\ot\ldots\ot e_{\ts N}\ot 
e_{\ts 1}\ot\ldots\ot e_{\ts N}\ot e_{\ts 1}\,\ts+
\\[8pt]
e_{\ts 1}\ot\ldots\ot e_{\ts N}\ot
e_{\ts-1}\ot 
e_{\ts 2}\ot\ldots\ot e_{\ts N}\ot e_{\ts 1}\,\sim
\\[6pt]
-\ts\,d_{\ts 1}\ot e_{\ts-1}\ot
e_{\ts 1}\ot d_{\ts 1}\ot e_{\ts 1}\,(-1)^{\ts N+1}+
d_{\ts 1}\ot e_{\ts 1}\ot e_{\ts-1}\ot 
d_{\ts 1}\ot e_{\ts 1}\,(-1)^{\ts N+1}\,\sim
\\[6pt]
d_{\ts 1}\ot x\ot
d_{\ts 1}\ot e_{\ts 1}\,(-1)^{\ts N+1}
\end{gather*}
where $x$ is as above.
Thus the last displayed line lies in the image of $Q_{\ts N,\ts N+1}\,$.

So far in the proof of our proposition 
we used the equality in Lemma \ref{L2} only in the cases 
when both sides of the equality annihilate a given vector. Now 
we will also use Corollary~\ref{C7} that stands outside of these cases. 
Consider the summands in the third and
fourth lines of \eqref{fivel} with any index $k>1\,$.
We get the relations 
\begin{gather*}
-\ts\,e_{\ts-k}\ot e_{\ts2}\ot\ldots\ot e_{\ts N}\ot 
e_{\ts 1}\ot\ldots\ot e_{\ts N}\ot e_{\ts k}\,\ts+
\\[8pt]
e_{\ts 1}\ot\ldots\ot e_{\ts N}\ot
e_{\ts1}\ot\ldots\ot e_{\ts k-1}
\ot e_{\ts-1}\ot 
e_{\ts k+1}\ot\ldots\ot e_{\ts N}\ot e_{\ts k}\,\ts\sim
\\[6pt]
-\ts\,a(u)\,d_{\ts k}\ot e_{\ts-1}\ot 
e_{\ts 1}\ot\ldots\ot e_{\ts N}\ot e_{\ts k}
\,(-1)^{\ts N+k}\,\,+
\\[6pt]
b\ts(u)\, d_{\ts 1}\ot e_{\ts-k}\ot
e_{\ts 1}\ot\ldots\ot e_{\ts N}\ot e_{\ts k}\,
(-1)^{\ts N}\ts\,+
\\[8pt]
a(u)\,e_{\ts 1}\ot\ldots\ot e_{\ts N}\ot
e_{\ts-k}\ot d_{\ts 1}\ot e_{\ts k}
\\[6pt]
-\ts\,b\ts(u)\,
e_{\ts 1}\ot\ldots\ot e_{\ts N}\ot e_{\ts-1}\ot d_{\ts k}\ot e_{\ts k}\,
(-1)^{\ts k}\,\ts\sim
\\[6pt]
-\ts\,a(u)\,d_{\ts k}\ot e_{\ts-1}\ot 
e_{\ts 1}\ot d_{\ts 1}\ot e_{\ts k}
\,(-1)^{\ts N+k}
\\[6pt]
-\ts\,b\ts(u)\, d_{\ts 1}\ot e_{\ts-k}\ot
e_{\ts k}\ot d_{\ts k}\ot e_{\ts k}\,
(-1)^{\ts N+k}+
\\[6pt]
a(u)\,
d_{\ts k}\ot e_{\ts k}\ot e_{\ts-k}\ot d_{\ts 1}\ot e_{\ts k}\,
(-1)^{\ts N+k}\,\,+
\\[6pt]
b\ts(u)\,d_{\ts1}\ot e_{\ts 1}\ot e_{\ts-1}\ot d_{\ts k}\ot e_{\ts k}\,
(-1)^{\ts N+k}\,\ts\sim
\\[6pt]
a(u)\,
d_{\ts k}\ot x\ot d_{\ts1}\ot e_{\ts k}\,
(-1)^{\ts N+k}\,+
b\ts(u)\,
d_{\ts 1}\ot x\ot d_{\ts k}\ot e_{\ts k}\,
(-1)^{\ts N+k}
\end{gather*}
where $x$ is still as above.
So the last displayed line lies in the image of $Q_{\ts N,\ts N+1}\,$.

Note that Corollary \ref{C7}
remains valid for $k=1\,$. In this particular case
the two linear combinations in the corollary are equal to
$(-1)^{\ts N+1}\,d_{\ts1}\ot e_{\ts-1}$ and 
$e_{\ts-1}\ot d_{\ts1}$ respectively, 
because $a(u)+b\ts(u)=1\,$.
The arguments used the last paragraph here also remain valid for $k=1\,$,
but simplify as in the paragraph before.
\qed
\end{proof}


\vspace{-\baselineskip}
\refstepcounter{section}
\subsection*{\bf\thesection}
\label{22}

Let us employ Proposition \ref{P8} in the proof of Theorem \ref{T3}. 
Consider the algebra
\begin{equation}
\label{algebra}
(\End\CN)^{\ts\ot\ts(2\ts N+1)}\ot\XpN\,.
\end{equation}
Elements of this algebra can be applied to
vectors of $(\CN)^{\ts\ot\ts(2\ts N+1)}$ via
$2\ts N+1$ tensor factors $\End(\CN)\,$.
Equip the vector space $(\CN)^{\ts\ot\ts(2\ts N+1)}$ with the 
basis of tensor products of the 
vectors $e_{\ts i}\in\CN\,$. This basis includes the vectors \eqref{fj}.

We have an equality of formal power series in $u^{-1},v^{-1}$
with coefficients in~\eqref{algebra}
\begin{gather*}
(\ts H(u)\ot H(\ts1-N-u\ts)\ot1\ot1\ts)\,
(\ts C\ts(u\,,\ns v)\ot1\ts)\ts\,\times
\\[4pt]
T_{\ts1}(u+N-1)\,\ldots\,T_{\ts N}(u)\,
T_{\ts N+1}(-\ts u)\,\ldots\,T_{\,2\ts N}(\ts1-N-u\ts)\,
T_{\,2\ts N+1}(v)=
\\[4pt]
(\ts H(u)\ot H(\ts1-N-u\ts)\ot1\ot1\ts)\,
T_{\,2\ts N+1}(v)\ts\,\times
\\[4pt]
T_{\ts1}(u+N-1)\,\ldots\,T_{\ts N}(u)\,
T_{\ts N+1}(-\ts u)\,\ldots\,T_{\,2\ts N}(\ts1-N-u\ts)\,
(\ts C\ts(u\,,\ns v)\ot1\ts)\,,
\end{gather*}
see the beginning of Section \ref{19}.
For any indices $i\,,j$ apply the right hand side of this equality
to the basis vector $f_{\ts j}$ and take 
the coefficient of the result at the basis vector $f_{\ts i}\,$.
By Proposition \ref{P8} and by
the second formula for $A\ts(u)$ as given in Section~\ref{15}
this coefficient is equal to
\begin{equation}
\label{TB}
\frac{(z-1)\,(w+1)}{z\,w}\,\,
T_{\ts ij}(v)\,A\ts(u)\,A\ts(\ts1-N-u\ts)\,.
\end{equation}
Indeed, due to \eqref{3.3} the product 
$$
T_{\ts1}(u+N-1)\,\ldots\,T_{\ts N}(u)\,
T_{\ts N+1}(-\ts u)\,\ldots\,T_{\,2\ts N}(\ts1-N-u\ts)
$$
by $H(u)\ot H(\ts1-N-u\ts)\ot1\ot1$ on the left 
is divisible by the same factor on~the right,
see the beginning of Section \ref{16}. 
Due to \eqref{QQ}
the product of 
$$
T_{\ts N}(u)\,T_{\ts N+1}(-\ts u)\,(\ts Q_{\ts N,\ts N+1}\ot1\ts)
$$ 
is divisible by $Q_{\ts N,\ts N+1}\ot1$ on the left. 
We use the relation
$Q_{\ts N+1,\ts N}=-\ts\,Q_{\ts N,\ts N+1}$ which follows from 
\eqref{pq}.
But the coefficient at $f_{\ts i}$ of any vector from the image of 
the action of
$(\ts H(u)\ot H(\ts1-N-u\ts)\ot 1\ts)\,Q_{\ts N,\ts N+1}$
on $(\CN)^{\ts\ot\ts(2\ts N+1)}$ is zero.
 
Next we shall prove that by 
applying the left hand side of the above displayed equality
to the vector $f_{\ts j}$ and then taking
the coefficient of the result at $f_{\ts i}$ we~get
\begin{equation}
\label{BT}
\frac{(z-1)\,(w+1)}{z\,w}\,\,
A\ts(u)\,A\ts(\ts1-N-u\ts)\,T_{\ts ij}(v)\,.
\end{equation}
Thus the product \eqref{TB} equals \eqref{BT}.
These equalities for all $i\,,j$ imply Theorem~\ref{T3}. 
Considering the
left hand side of the above displayed equality will be easier than the
right hand side,
thanks to the obvious advantages of Lemma~\ref{L1}~over Lemma~\ref{L2}. 


\vspace{-\baselineskip}
\refstepcounter{section}
\subsection*{\bf\thesection}
\label{23}

Let us now use the right action of the $\ZZ_{\ts2}$-graded
algebra $\End(\CN)$ on~the $\ZZ_{\ts2}$-graded vector space
$\CN$ instead of the left action which we used before. 
For the 
matrix units $E_{\ts ij}\in\End\CN$ we have
$e_{\ts k}\,E_{\ts ij}=\de_{\ts ik}\,e_{\ts j}\,$. For
for any positive integer 
$n$ we can then define the right action
of the algebra $(\End\CN)^{\ot\ts n}$ 
on the vector space $(\CN)^{\ot\ts n}$ by the conventions similar to
\eqref{XY} and \eqref{xy}. 

Notice that the right action of the element $P\in(\End\CN)^{\ot\ts 2}$
defined by~\eqref{p} still maps 
$
e_i\ot e_{j}\mapsto e_j\ot e_i\,{(-1)}^{\,\bi\ts\bj}\ts.
$
The image of the right action on $(\CN)^{\ts\ot\ts2}$
of the element $Q$ in \eqref{q} is still one-dimensional, but is spanned by 
the vector
\begin{equation}
\label{qqimage}
\sum_j\,e_{\ts j}\ot e_{\,-\ts j}\,.
\end{equation}
By using Lemmas \ref{L1} and \ref{L3} with $n=N$ 
we immediately get the next two lemmas.

\begin{lemma}
\label{L6}
Applying $(\ts H(u)\ot1\ts)\,F\ts(u\,,\ns v)$ to 
$e_{\ts 1}\ot\ldots\ot e_{\ts N}\ot e_{\ts1}$ from the right
has the same effect as
applying $H(u)\ot1$ from the right to the vector~\eqref{v4}.
\end{lemma}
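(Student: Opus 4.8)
The plan is to transcribe the proof of Lemma~\ref{L4} to the right action, organizing it around Lemma~\ref{L3}, whose factored form is tailor-made for this purpose. Writing $z=u-v$ and $w=u+v+N-1$ as in Section~\ref{18}, Lemma~\ref{L3} with $n=N$ expresses $(\ts H(u)\ot1\ts)\,F\ts(u\,,\ns v)$ as
$$
\biggl(1-\frac{P_{\,1,N+1}+\ldots+P_{\,N,N+1}}{z}+\frac{Q_{\,1,N+1}+\ldots+Q_{\,N,N+1}}{w}\biggr)\ts(\ts H(u)\ot1\ts)
$$
plus terms divisible on the left by some $Q_{\ts pq}$ with $1\le p<q\le N\,$. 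I would apply this operator identity from the right to the vector $e_{\ts1}\ot\ldots\ot e_{\ts N}\ot e_{\ts1}\,$, using that under the right action the leftmost factor of a product acts first.

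The extra terms divisible on the left by $Q_{\ts pq}$ then disappear at once. Indeed, the right action of $Q$ on a two-fold tensor has the one-dimensional image \eqref{qqimage}, which forces a pair of opposite-sign indices; since our vector carries only positive indices, it is annihilated on the left by every $Q_{\ts pq}$ with $p<q\le N\,$. For the very same reason each $Q_{\,p,N+1}$ kills the vector, so the $Q$-summand of the bracket drops out as well. This is exactly where Lemma~\ref{L3} is more convenient than a direct expansion\ts: it parks all the unwanted $Q$'s on the left, where the right action annihilates them immediately.

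There remains the bracket $1-(P_{\,1,N+1}+\ldots+P_{\,N,N+1})/z\,$. Since $P$ acts identically under the left and the right actions, and all indices here are even so no signs arise, $P_{\,p,N+1}$ simply transposes the entry in slot $p$ with the final entry $e_{\ts1}\,$; for $p\ge2$ this repeats the index $1$ in slots $1$ and $p\,$, and the subsequent right action of $H(u)\ot1$ antisymmetrizes the first $N$ slots and kills such a vector. Here I use that the remark preceding Lemma~\ref{L2} holds verbatim for the right action, again because $P$ acts the same on both sides while $Q$ vanishes on positive-index vectors. The term $P_{\,1,N+1}$ fixes the vector. Collecting the surviving contributions leaves the scalar $1-1/z=(z-1)/z$ multiplying the right action of $H(u)\ot1$ on $e_{\ts1}\ot\ldots\ot e_{\ts N}\ot e_{\ts1}\,$, which is precisely the right action of $H(u)\ot1$ on \eqref{v4}.

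The only points needing care are the two right-action facts just invoked\ts: that $Q_{\ts pq}$ annihilates a positive-index vector on the left, and that $H(u)$ still antisymmetrizes the first $N$ slots. Both are immediate from the observations opening Section~\ref{23}, namely that $P$ acts the same way under the two actions while $Q$ has the altered one-dimensional right image \eqref{qqimage}. Everything else is a direct transcription of Lemma~\ref{L4}, which is why the statement can be asserted as immediate.
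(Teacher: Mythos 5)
Your proof is correct and, in substance, it is the paper's proof: the paper disposes of Lemma~\ref{L6} in one sentence, ``by using Lemmas~\ref{L1} and~\ref{L3} with $n=N$'', and your argument --- decompose via Lemma~\ref{L3}, kill every $Q$-term with the right action because the starting vector carries only positive indices, then analyse the $P_{\,p,N+1}$ terms --- is exactly that computation written out. The one place you diverge is the tool used at the last step. Where the paper invokes Lemma~\ref{L1}, you assert instead that the antisymmetrization remark preceding Lemma~\ref{L2} ``holds verbatim for the right action'' because $P$ acts the same under both actions while $Q$ vanishes on positive-index vectors. The assertion is true, but the justification as stated has a small hole: under the right action the factors of $H(u)$ are composed in the \emph{reversed} order, since $x\cdot(AB)=(x\cdot A)\cdot B$, so the agreement of each individual factor's two actions does not by itself transfer the product formula; one needs in addition that the reversed-order product $\prod\,(1+P_{pq}/(p-q))$ is again the antisymmetrizer (true, for instance because reversing the product corresponds to $\sigma\mapsto\sigma^{-1}$, which preserves signs). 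Lemma~\ref{L1} sidesteps this entirely, which is presumably why the paper cites it: if a vector $y$ is symmetric in slots $1$ and $p$, then
\begin{equation*}
y\cdot H(u)=(y\cdot P_{1p})\cdot H(u)=y\cdot\bigl(P_{1p}\,H(u)\bigr)=-\,y\cdot H(u)=0\,,
\end{equation*}
and this is the only vanishing your argument needs --- no claim about the full right action of $H(u)$ on positive-index vectors is required. With that one substitution, or with the one-line repair of the reversed-order point, your proof is complete and coincides with the paper's.
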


\begin{lemma}
\label{L7}
Applying $(\ts H(u)\ot1\ts)\,F\ts(u\,,\ns v)$ to 
$e_{\ts 1}\ot\ldots\ot e_{\ts N}\ot e_{\ts-1}$ from the right
has the same effect as
applying $H(u)\ot1$ from the right to the linear combination
\begin{gather*}
\frac{w+1}w\,\,e_{\ts 1}\ot\ldots\ot e_{\ts N}\ot e_{\ts-1}
\\[0pt]
-\,\frac1z\,\,\sum_{k=1}^N\,\ts
e_{\ts1}\ot\ldots\ot e_{\ts k-1}
\ot e_{\ts-1}\ot 
e_{\ts k+1}\ot\ldots\ot e_{\ts N}\ot e_{\ts k}\ts\,+
\\[-5pt]
\frac1w\,\,\sum_{k=1}^N\,\ts
e_{\ts-k}\ot e_{\ts2}\ot\ldots\ot e_{\ts N}\ot e_{\ts k}\,.
\end{gather*}
\end{lemma}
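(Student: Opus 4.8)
The plan is to transcribe the proof of Lemma \ref{L5} line by line, replacing the left action of $(\End\CN)^{\ot\ts N}$ by the right action introduced in Section \ref{23}, and to isolate the single point at which the two computations differ. First I would apply Lemma \ref{L3} with $n=N$ to rewrite $(\ts H(u)\ot1\ts)\,F\ts(u\,,\ns v)$ as the operator $\bigl(\ts1-(P_{\,1,N+1}+\ldots+P_{\,N,N+1})/z+(Q_{\,1,N+1}+\ldots+Q_{\,N,N+1})/w\ts\bigr)\ts(\ts H(u)\ot1\ts)$ together with correction terms that are divisible on the left by some $Q_{\ts pq}$ with $1\le p<q\le N$. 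Acting from the right on $v=e_{\ts1}\ot\ldots\ot e_{\ts N}\ot e_{\ts-1}$, these corrections vanish: by Section \ref{23} the right action of $Q_{\ts pq}$ is nonzero only when the $p$-th and $q$-th tensor factors have the form $e_{\ts a}\ot e_{\,-a}$, and no such pair occurs among the first $N$ positive indices of $v$, so $v\cdot Q_{\ts pq}=0$ for $1\le p<q\le N$.

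Next I would evaluate $v$ against the bracket on the right. Each $P_{\ts p,N+1}$ merely transposes the positive index $e_{\ts p}$ with $e_{\ts-1}$ and carries no sign, since $\bar p=0$ for $1\le p\le N$, so the $P$-terms reproduce the middle sum $-\,z^{-1}\sum_k e_{\ts1}\ot\ldots\ot e_{\ts-1}\ot\ldots\ot e_{\ts N}\ot e_{\ts k}$ of the asserted combination. Among the terms $Q_{\ts p,N+1}$ only $p=1$ survives, because $v\cdot Q_{\ts p,N+1}\neq0$ forces the $p$-th factor to be $e_{\ts1}$. This is the one place where the present argument departs from that of Lemma \ref{L5}: by \eqref{qqimage} the image of the right action of $Q$ is spanned by $\sum_j e_{\ts j}\ot e_{\,-j}$, rather than by $\sum_i e_{\ts i}\ot e_{\,-i}\,(-1)^{\,\bi}$ as in \eqref{qimage}. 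Splitting this sum over positive and negative $j$ (the intermediate factors $e_{\ts2}\ot\ldots\ot e_{\ts N}$ being of degree zero and hence contributing no signs) gives $v\cdot Q_{\,1,N+1}=\sum_{k=1}^N\bigl(e_{\ts k}\ot e_{\ts2}\ot\ldots\ot e_{\ts N}\ot e_{\,-k}+e_{\,-k}\ot e_{\ts2}\ot\ldots\ot e_{\ts N}\ot e_{\ts k}\bigr)$, with a \emph{plus} sign between the two sums in place of the minus sign found in Lemma \ref{L5}.

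Finally I would apply $H(u)\ot1$ from the right to the resulting vector. Since $P$ acts by the same transposition under the right action, the antisymmetrization remark preceding Lemma \ref{L2} remains valid, and it collapses the first of these two sums to its $k=1$ term; that term contributes $w^{-1}v$ and combines with the leading contribution to produce the coefficient $(w+1)/w$ in front of $e_{\ts1}\ot\ldots\ot e_{\ts N}\ot e_{\ts-1}$. The second sum is untouched by this collapse and becomes the last line of the claimed linear combination, now carrying the prefactor $+\,w^{-1}$. The only real obstacle is the sign bookkeeping in the right action---specifically verifying that \eqref{qqimage} rather than \eqref{qimage} controls the $Q_{\,1,N+1}$ contribution and that the middle factors add no signs---since every structural ingredient (the transposition action of $P$ and Lemmas \ref{L1}--\ref{L3}) is identical to the left-action case treated in Lemma \ref{L5}.
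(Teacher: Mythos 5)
Your proof is correct and takes essentially the same route as the paper, which obtains Lemma \ref{L7} precisely by combining the decomposition of Lemma \ref{L3} with $n=N$ with the right-action properties used in your argument: the correction terms divisible on the left by $Q_{\ts pq}$ die against $e_{\ts 1}\ot\ldots\ot e_{\ts N}\ot e_{\ts-1}$ under the right action, only $Q_{\,1,N+1}$ survives in the bracket with the sign-free spanning vector \eqref{qqimage}, and the antisymmetrizing property of $H(u)$ (Lemma \ref{L1}) collapses $\sum_k e_{\ts k}\ot e_{\ts2}\ot\ldots\ot e_{\ts N}\ot e_{\ts-k}$ to its $k=1$ term, producing the coefficient $(w+1)/w$. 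Your identification of \eqref{qqimage} versus \eqref{qimage} as the sole source of the sign change $-\ts1/w \to +\ts1/w$ relative to Lemma \ref{L5} is exactly the right point.
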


By using Lemma \ref{L6} first in its original formulation
and then with the variable $u$ replaced by $1-N-u$
we immeditaly get the following corollary to this lemma.

\begin{corollary}
\label{C8}
Applying $(\ts H(u)\ot H(\ts1-N-u\ts)\ot1\ts)\,C\ts(u\,,\ns v)$ to 
$f_{\ts 1}$ from the right has the same effect as applying 
$H(u)\ot H(\ts1-N-u\ts)\ot1$ from the right to 
\eqref{v5}.
\end{corollary}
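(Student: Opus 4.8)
The plan is to mirror the proof of Corollary \ref{C5}, replacing every left action by the corresponding right action and invoking Lemma \ref{L6} in place of Lemma \ref{L4}. First I would unfold the definition $C\ts(u\,,\ns v)=F^{\,\prime}\ts(u\,,\ns v)\,F^{\,\prime\prime}\ts(\ts1-u-N\,,\ns v)$ from Section \ref{19}, recalling that $F^{\,\prime}\ts(u\,,\ns v)$ acts only on the tensor factors $1\lc N$ and $2\ts N+1\ts$, whereas $F^{\,\prime\prime}\ts(\ts1-u-N\,,\ns v)$ acts only on the factors $N+1\lc 2\ts N$ and $2\ts N+1\ts$. Since $H(u)$ occupies the factors $1\lc N$ and $H(\ts1-N-u\ts)$ occupies the factors $N+1\lc 2\ts N\ts$, the operator $H(\ts1-N-u\ts)$ commutes with $F^{\,\prime}\ts(u\,,\ns v)$ and the operator $H(u)$ commutes with $F^{\,\prime\prime}\ts(\ts1-u-N\,,\ns v)\ts$, because in each case the two factors act on disjoint tensor components. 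This lets me regroup the operator $\bigl(H(u)\ot H(\ts1-N-u\ts)\ot1\bigr)\,C\ts(u\,,\ns v)$ as the product of $\bigl(H(u)\ot1\ot1\bigr)\,F^{\,\prime}\ts(u\,,\ns v)$ with $\bigl(1\ot H(\ts1-N-u\ts)\ot1\bigr)\,F^{\,\prime\prime}\ts(\ts1-u-N\,,\ns v)\ts$.

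Then I would evaluate $f_{\ts1}$ acted on from the right by this product, applying the leftmost factor first as is appropriate for a right action. Peeling off $\bigl(H(u)\ot1\ot1\bigr)\,F^{\,\prime}\ts(u\,,\ns v)$ and using Lemma \ref{L6} in its original formulation on the factors $1\lc N,2\ts N+1\ts$, where $f_{\ts1}$ restricts to $e_{\ts1}\ot\ldots\ot e_{\ts N}\ot e_{\ts1}\ts$, produces the scalar $(z-1)/z$ and leaves the restriction to the factors $N+1\lc 2\ts N,2\ts N+1$ equal to $e_{\ts1}\ot\ldots\ot e_{\ts N}\ot e_{\ts1}$ as before. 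Next I would apply $\bigl(1\ot H(\ts1-N-u\ts)\ot1\bigr)\,F^{\,\prime\prime}\ts(\ts1-u-N\,,\ns v)$ and invoke Lemma \ref{L6} with the variable $u$ replaced by $\ts1-N-u$ on the factors $N+1\lc 2\ts N,2\ts N+1\ts$. Here I would use the observation recorded just before Corollary \ref{C5} that the substitution $u\mapsto 1-N-u$ sends $z=u-v$ to $-\,w\ts$, so the resulting scalar is $(-\,w-1)/(-\,w)=(w+1)/w\ts$. Multiplying the two scalars yields $(z-1)\,(w+1)/(z\,w)\ts$, which is exactly the coefficient in \eqref{v5}; hence $f_{\ts1}\,\bigl(H(u)\ot H(\ts1-N-u\ts)\ot1\bigr)\,C\ts(u\,,\ns v)$ from the right agrees with $H(u)\ot H(\ts1-N-u\ts)\ot1$ applied from the right to \eqref{v5}.

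The argument is essentially bookkeeping, and the only point requiring care is the ordering inherent in the right action together with the disjointness of the tensor supports of the various factors, which is precisely what licenses moving each copy of $H$ past the factor $F^{\,\prime}$ or $F^{\,\prime\prime}$ on which it does not act. I expect no genuine obstacle beyond tracking which tensor factor each operator touches and checking that applying $F^{\,\prime}$ does not disturb the factor $e_{\ts1}$ in position $2\ts N+1$ needed for the second application of Lemma \ref{L6}. Once Lemma \ref{L6} and the substitution $u\mapsto 1-N-u$ are in place, the corollary follows immediately, exactly as Corollary \ref{C5} follows from Lemma \ref{L4}.
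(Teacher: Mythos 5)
Your proposal is correct and follows essentially the same route as the paper, which proves Corollary \ref{C8} by invoking Lemma \ref{L6} first in its original formulation (for the factor $F^{\,\prime}\ts(u\,,\ns v)$, which acts first under the right action) and then with $u$ replaced by $1-N-u$ (for $F^{\,\prime\prime}$), the substitution turning $z$ into $-\,w$ and hence $(z-1)/z$ into $(w+1)/w\,$. The disjoint-support commutation bookkeeping you spell out is exactly what the paper leaves implicit in its one-line proof.
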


Below is the counterpart of Corollary \ref{C8}
for our Lemma \ref{L7} instead of Lemma~\ref{L6}.

\begin{corollary}
\label{C9}
Applying $(\ts H(u)\ot H(\ts1-N-u\ts)\ot1\ts)\,C\ts(u\,,\ns v)$ to 
$f_{\ts-1}$ from the right has the same effect 
as applying $H(u)\ot H(\ts1-N-u\ts)\ot1$ from the right to
\begin{gather}
\notag
\hspace{-10pt}
\frac{(z-1)\,(w+1)}{z\,w}\,f_{\ts-1}\,\ts+
\\[0pt]
\notag
\hspace{10pt}
\frac{w+1}{w^{\ts2}}\,\,\sum_{k=1}^N\,\ts
e_{\ts 1}\ot\ldots\ot e_{\ts N}\ot
e_{\ts1}\ot\ldots\ot e_{\ts k-1}
\ot e_{\ts-1}\ot 
e_{\ts k+1}\ot\ldots\ot e_{\ts N}\ot e_{\ts k}
\\[-5pt]
\notag
\hspace{36pt}
-\,\ts\frac{w+1}{z\,w}\,\,\sum_{k=1}^N\,\ts
e_{\ts 1}\ot\ldots\ot e_{\ts N}\ot
e_{\ts-k}\ot e_{\ts2}\ot\ldots\ot e_{\ts N}\ot e_{\ts k}
\\[-5pt]
\notag
\hspace{-10pt}
-\,\ts\frac{w+1}{z\,w}\,\,\sum_{k=1}^N\,\ts
e_{\ts1}\ot\ldots\ot e_{\ts k-1}
\ot e_{\ts-1}\ot 
e_{\ts k+1}\ot\ldots\ot e_{\ts N}\ot 
e_{\ts 1}\ot\ldots\ot e_{\ts N}\ot e_{\ts k}\,\ts+
\\[-5pt]
\label{fiver}
\hspace{-10pt}
\qquad\qquad
\frac{w+1}{w^{\ts2}}\,\,\sum_{k=1}^N\,\ts
e_{\ts-k}\ot e_{\ts2}\ot\ldots\ot e_{\ts N}\ot 
e_{\ts 1}\ot\ldots\ot e_{\ts N}\ot e_{\ts k}\,.
\end{gather}
\end{corollary}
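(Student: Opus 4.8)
The plan is to mirror the proof of Corollary \ref{C6}, with the left action replaced throughout by the right action and with Lemma \ref{L7} used in place of Lemma \ref{L5}. Recall from Section \ref{19} that $C\ts(u\,,\ns v)=F^{\,\prime}\ts(u\,,\ns v)\,F^{\,\prime\prime}\ts(\ts1-u-N\,,\ns v)\,$, where $F^{\,\prime}$ acts on the tensor factors $1\lc N$ together with $2\ts N+1\,$, while $F^{\,\prime\prime}$ acts on the factors $N+1\lc2\ts N$ together with $2\ts N+1\,$. Since $H(u)$ on the first block commutes with $F^{\,\prime\prime}$ and $H(\ts1-N-u\ts)$ on the second block commutes with $F^{\,\prime}\,$, the operator $(\ts H(u)\ot H(\ts1-N-u\ts)\ot1\ts)\,C\ts(u\,,\ns v)$ equals the product of $(\ts H(u)\ot1\ot1\ts)\,F^{\,\prime}\ts(u\,,\ns v)$ with $(\ts1\ot H(\ts1-N-u\ts)\ot1\ts)\,F^{\,\prime\prime}\ts(\ts1-u-N\,,\ns v)\,$. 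As we act from the right, here $F^{\,\prime}$ acts \emph{first}; this is the only change in the structure of the argument from the proof of Corollary \ref{C6}, where $F^{\,\prime\prime}$ acted first.

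First I would apply $(\ts H(u)\ot1\ot1\ts)\,F^{\,\prime}\ts(u\,,\ns v)$ from the right to $f_{\ts-1}\,$. On the factors $1\lc N$ and $2\ts N+1$ the vector $f_{\ts-1}$ restricts to $e_{\ts 1}\ot\ldots\ot e_{\ts N}\ot e_{\ts-1}\,$, with the second block $e_{\ts 1}\ot\ldots\ot e_{\ts N}$ a spectator, so Lemma \ref{L7} applies verbatim. Modulo the right action of $H(u)\ot1\ot1$ this yields a three-line combination: the term $\frac{w+1}{w}\,f_{\ts-1}\,$, a sum $-\,\frac1z\sum_k$ with $e_{\ts-1}$ placed in the $k$-th slot of the first block and last factor $e_{\ts k}\,$, and a sum $+\,\frac1w\sum_k$ with first factor $e_{\ts-k}$ and last factor $e_{\ts k}\,$.

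Next I would apply $(\ts1\ot H(\ts1-N-u\ts)\ot1\ts)\,F^{\,\prime\prime}\ts(\ts1-u-N\,,\ns v)$ from the right to each of these three lines. For the first line $\frac{w+1}{w}\,f_{\ts-1}$ the last factor is still $e_{\ts-1}\,$, so Lemma \ref{L7} applies with $u$ replaced by $1-N-u\,$; since this substitution sends $z\mapsto-\,w$ and $w\mapsto-\,z\,$, it produces exactly the first three lines of \eqref{fiver}. For the remaining two lines the last factor is some $e_{\ts k}$ with $k>0$ and the second block carries only positive indices, so every $Q_{\,p\ts,\ts2\ts N+1}$ annihilates them and $F^{\,\prime\prime}$ reduces to its $P$-part. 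By Lemma \ref{L2} and the antisymmetrizer identity of \cite[Section~2.3]{MNO}, that $P$-part collapses on the second block to $1-(\ts P_{\,N+1,\ts2\ts N+1}+\ldots+P_{\,2\ts N,\ts2\ts N+1}\ts)/(\ts-\,w\ts)\,$, the combined denominator being $(\ts1-u-N\ts)-v=-\,w$ rather than $z\,$. By the remark made just before stating Lemma \ref{L2}, only the summand $P_{\,k\ts,\ts2\ts N+1}$ survives under $H(\ts1-N-u\ts)$ and leaves the vector unchanged, so each of these two lines is merely multiplied by $1+\frac1w=\frac{w+1}{w}\,$, giving the fourth and the fifth lines of \eqref{fiver}. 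Finally the right action of $H(u)\ot1\ot1$ is reinstated, as it commutes with everything performed on the second block and the last factor, which completes the identification with \eqref{fiver}.

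I expect the main obstacle to be the reduction of $F^{\,\prime\prime}$ to a single combined $P$-term on the two positive-index lines: one must check that tracking the shifted arguments $-\,u\lc1-u-N$ of the factors of $F^{\,\prime\prime}$ yields the combined denominator $-\,w\,$, and hence the scalar $\frac{w+1}{w}$ rather than the $\frac{z-1}{z}$ that arose in Corollary \ref{C6}. Some care is also warranted because the image of the right action of $Q$ on $(\CN)^{\ts\ot\ts2}$ is spanned by \eqref{qqimage} rather than by \eqref{qimage}; this distinction, however, does not enter until Proposition \ref{P8}, and here it suffices that the $Q$-parts annihilate the positive-index lines.
\qed
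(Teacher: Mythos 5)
Your proposal is correct and follows the paper's own proof essentially step for step: Lemma \ref{L7} is applied first on the block $1\lc N$ and then, with $u$ replaced by $1-N-u$ (so that $z\mapsto-\,w$ and $w\mapsto-\,z$), on the block $N+1\lc 2\ts N$, while the two positive-index lines are handled by dropping the $Q$-parts of $F^{\,\prime\prime}$ and collapsing its $P$-parts to the single denominator $w$, which multiplies those lines by $(w+1)/w$ and yields the fourth and fifth lines of \eqref{fiver}. Two small inaccuracies: the surviving summand is $P_{\,N+k\ts,\ts2\ts N+1}$ rather than $P_{\,k\ts,\ts2\ts N+1}$, and in this right-action setting the collapse is justified by Lemma \ref{L1} after moving $H(\ts1-N-u\ts)$ through $F^{\,\prime\prime}$ by means of \eqref{rrr}, not by Lemma \ref{L2} --- this is precisely the advantage of Lemma \ref{L1} over Lemma \ref{L2} that the paper points out at the end of Section \ref{22}.
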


\begin{proof}
Using Lemma \ref{L7} we obtain that applying
$(\ts H(u)\ot1\ot1\ts)\,F^{\,\prime}\ts(\ts u\,,\ns v)$ to 
$f_{\ts-1}$ from the right has the same effect as applying $H(u)\ot1\ot1$ 
from the right~to 
\begin{gather}
\notag
\frac{w+1}w\,f_{\ts-1}
\\[0pt]
\notag
-\,\frac1z\,\,\sum_{k=1}^N\,\ts
e_{\ts1}\ot\ldots\ot e_{\ts k-1}
\ot e_{\ts-1}\ot 
e_{\ts k+1}\ot\ldots\ot e_{\ts N}\ot 
e_{\ts 1}\ot\ldots\ot e_{\ts N}\ot e_{\ts k}\ts\,+
\\[-5pt]
\label{threerines}
\frac1w\,\,\sum_{k=1}^N\,\ts
e_{\ts-k}\ot e_{\ts2}\ot\ldots\ot e_{\ts N}\ot 
e_{\ts 1}\ot\ldots\ot e_{\ts N}\ot e_{\ts k}\,.
\end{gather}

By Lemma \ref{L7} with the variable $u$ replaced by $1-N-u\,$,
applying the product
$(\ts1\ot H(\ts1-N-u\ts)\ot1\ts)\,F^{\,\prime\prime}\ts(\ts1-N-u\,,\ns v)$
to the first of three lines \eqref{threerines} from the right
has the same effect as applying $1\ot H(\ts1-N-u\ts)\ot1$ from the right~to 
the sum of first three lines in~\eqref{fiver}.

Applying 
$(\ts1\ot H(\ts1-N-u\ts)\ot1\ts)\,F^{\,\prime\prime}\ts(\ts1-N-u\,,\ns v)$
to the second and to the third lines of \eqref{threerines} from the right
has the same effect as applying from the~right 
\begin{equation*}
\biggl(\,1+\frac{P_{\,2\ts N,2\ts N+1}}{u+v+N-1}\biggr)
\ts\ldots\ts
\biggl(\,1+\frac{P_{\,N+1,2\ts N+1}}{u+v}\,\biggr)\,
(\ts1\ot H(\ts1-N-u\ts)\ot1\ts)\,,
\end{equation*}
see the beginning of the proof of Lemma \ref{L3} for $n=N\,$.
By Lemma \ref{L1} applying the last displayed product
to the second and the third lines of \eqref{threerines} from the right
gives the same result as applying to them from the right the product
\begin{equation}
\label{numerar}
\biggl(
\,1+\frac{P_{\,N+1,2\ts N+1}+\ldots+P_{\,2\ts N,2\ts N+1}}{u+v+N-1}
\biggr)\,
(\ts1\ot H(\ts1-N-u\ts)\ot1\ts)
\,.
\end{equation}
When applying \eqref{numerar}
to the summands with the index $k$ in the second and third lines of
\eqref{threerines}, 
all summands of the numerator in \eqref{numerar}
vanish except $P_{\,N+k\ts,\ts2\ts N+1}\,$.
The latter does not change the summands with the index $k$
in the second and third lines of \eqref{threerines}. 
Hence applying 
$(\ts1\ot H(\ts1-N-u\ts)\ot1\ts)\,F^{\,\prime\prime}\ts(\ts1-N-u\,,\ns v)$
to the second and the third lines of \eqref{threerines}
from the right gives the same as applying 
$1\ot H(\ts1-N-u\ts)\ot1$ 
respectively to the fourth and the fifth lines of \eqref{fiver}.
\qed
\end{proof}


\vspace{-\baselineskip}
\refstepcounter{section}
\subsection*{\bf\thesection}
\label{24}

Changing the notation of Section \ref{21}
let us now relate by the symbol $\sim$
two vectors in $(\CN)^{\ts\ot\ts(2\ts N+1)}(u\,,v)$ to each other
if either their difference vanishes under the right action of
$H(u)\ot H(1-N-u)\ot1\,$, 
or it lies in the image of the right action of $Q_{\,1,\ts2\ts N}\,$.
Extend the new relation $\sim$ transitively. Denote
\begin{gather*}
D\ts(u\,,\ns v)=
R_{\,N,2\ts N+1\ts}(\ts u\,,\ns v\ts)\,\ldots\,
R_{\,1,2\ts N+1\ts}(\ts u+N-1\,,\ns v\ts)\,\times
\\[4pt]
R_{\,2\ts N,2\ts N+1\ts}(\ts 1-u-N\,,\ns v\ts)\,\ldots\,
R_{\,N+1,2\ts N+1\ts}(-\,u\,,\ns v\ts)\,\,.
\end{gather*}
By choosing $n=N$ in the beginning of the proof Lemma \ref{L3}
we obtain the equality
\begin{gather}
\notag
(\ts H(u)\ot H(\ts1-N-u\ts)\ot1\ts)\ts\,C\ts(u\,,\ns v)=
\\[4pt]
\label{cuv}
D\ts(u\,,\ns v)\,(\ts H(u)\ot H(\ts1-N-u\ts)\ot1\ts)\,.
\end{gather}

\begin{proposition}
\label{P9}
For\/ $i=\pm\,1\lc\!\pm\ns N$ and for
the right action on the vector $f_{\ts i}$
$$
f_{\ts i}\,D\ts(u\,,\ns v)\,\sim\,\frac{(z-1)\,(w+1)}{z\,w}\,f_{\ts i}\,.
$$
\end{proposition}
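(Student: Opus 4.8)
The plan is to prove Proposition \ref{P9} in complete parallel with Proposition \ref{P8}, replacing every left action by the corresponding right action, the operator $Q_{\ts N,\ts N+1}$ by $Q_{\,1,\ts2\ts N}$, and Corollaries \ref{C5}, \ref{C6} by Corollaries \ref{C8}, \ref{C9}. The bridge between the two propositions is the relation \eqref{cuv}: since $(\ts H(u)\ot H(\ts1-N-u\ts)\ot1\ts)\,C\ts(u\,,\ns v)=D\ts(u\,,\ns v)\,(\ts H(u)\ot H(\ts1-N-u\ts)\ot1\ts)$, applying both sides from the right to $f_{\ts i}$ shows that $f_{\ts i}\,D\ts(u\,,\ns v)$, after right multiplication by $H(u)\ot H(\ts1-N-u\ts)\ot1$, produces exactly the vector already computed in Corollaries \ref{C8} and \ref{C9}. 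In particular \eqref{cuv} shows that the right product of $D\ts(u\,,\ns v)$ with $H(u)\ot H(\ts1-N-u\ts)\ot1$ is divisible by this same factor on the left, since it equals $(\ts H(u)\ot H(\ts1-N-u\ts)\ot1\ts)\,C\ts(u\,,\ns v)$.

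First I would reduce to the two cases $i=1$ and $i=-1$, exactly as in Proposition \ref{P8}. The sums \eqref{p} and \eqref{q} are invariant under any simultaneous permutation of the indices $1\lc N$ and $-\ts1\lc-\ts N$, so $H(u)$, $C\ts(u\,,\ns v)$ and hence $D\ts(u\,,\ns v)$ are invariant under every such permutation; the image of \eqref{onedim} under the right action of $H(u)\ot H(\ts1-N-u\ts)$ is unchanged by Lemma \ref{L2}, and the divisibility just noted lets the argument descend to a single representative of each sign. For $i=1$ the assertion is then immediate: combining Corollary \ref{C8} with \eqref{cuv} gives the equality $f_{\ts1}\,D\ts(u\,,\ns v)\,(H(u)\ot H(\ts1-N-u\ts)\ot1)=\tfrac{(z-1)\,(w+1)}{z\,w}\,f_{\ts1}\,(H(u)\ot H(\ts1-N-u\ts)\ot1)$, whence $f_{\ts1}\,D\ts(u\,,\ns v)\sim\tfrac{(z-1)\,(w+1)}{z\,w}\,f_{\ts1}$.

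The case $i=-1$ is the substance of the proof. Corollary \ref{C9} together with \eqref{cuv} shows that $f_{\ts-1}\,D\ts(u\,,\ns v)$ is related by $\sim$ to the linear combination \eqref{fiver}, so it remains to prove that the four sums in lines two through five of \eqref{fiver} vanish under the relation $\sim$ of Section \ref{24}. I would pair the summand of index $k$ in line two with that of line five, and the summand of index $k$ in line three with that of line four. In each pair, applying $H(u)$ to the first block of $N$ factors and $H(\ts1-N-u\ts)$ to the middle block from the right --- by the right-action form of Lemma \ref{L2} and of Corollary \ref{C7} --- transports the two negative indices to positions $1$ and $2\ts N$, where they assemble into the contracted vector \eqref{qqimage} and therefore lie in the image of the right action of $Q_{\,1,\ts2\ts N}$.

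The hard part will be this last bookkeeping. Unlike in Proposition \ref{P8}, the two summands in each pair of \eqref{fiver} carry the \emph{same} coefficient, with denominators built from $w$ rather than $z$, so the relative sign that produces the cancellation must come entirely from the signs generated by the right action of $H(u)$ and $H(\ts1-N-u\ts)$ through Lemma \ref{L2} and the right-action analogue of Corollary \ref{C7}. Keeping these signs and the factors $a(u)$, $b\ts(u)$ straight, and verifying that the residual vector really does have the form \eqref{qqimage} in the tensor slots $1$ and $2\ts N$ acted on by $Q_{\,1,\ts2\ts N}$, is the one genuinely delicate step; everything else is a transcription of the corresponding computation in Proposition \ref{P8}.
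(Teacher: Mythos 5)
Your skeleton does match the paper's proof: reduction to $i=\pm\,1$ via the permutation symmetry, the case $i=1$ from Corollary \ref{C8} together with \eqref{cuv}, and for $i=-1$ the pairing of lines two/five and three/four of \eqref{fiver}, with the aim of producing vectors in the image of the right action of $Q_{\,1,\ts2\ts N}\,$. The gap is in the step you yourself single out as ``genuinely delicate'' and then leave undone, and the tools you name for it --- ``right-action forms'' of Lemma \ref{L2} and Corollary \ref{C7}, with the coefficients $a(u)$, $b\ts(u)$ --- are the wrong ones; no such forms are needed, nor do useful ones exist. The relation $\sim$ of Section \ref{24} requires differences to vanish under right multiplication by $H(u)\ot H(1-N-u)\ot1$; to rewrite a vector $x$ modulo this kernel you need identities with the operators standing to the \emph{left} of $H(u)$, that is $A\,H(u)=B\,H(u)$, since $(x\ts A)\,H(u)=x\,(A\,H(u))\,$. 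These are supplied by Lemma \ref{L1}: $P_{\ts pq}\ts H(u)=-\,H(u)$ and $Q_{\ts pq}\ts H(u)=0$ for \emph{all} $p<q\,$. Lemma \ref{L2} and Corollary \ref{C7} instead concern products $H(u)\,Q_{\ts p,p+1}$ with the operator on the other side of $H(u)$, which is what the left-action relation of Section \ref{21} demands; that is exactly why they were needed in Proposition \ref{P8} and why $a(u)$, $b\ts(u)$ appeared there, and exactly why the paper remarks at the end of Section \ref{22} that the present computation is easier thanks to the advantages of Lemma \ref{L1} over Lemma \ref{L2}.

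Once Lemma \ref{L1} is in hand, the mechanism is elementary and is \emph{not} a sign cancellation between the two paired summands, as you describe it. Within each block of $N$ tensor factors one may permute factors at the cost of $-1$ per transposition; any vector whose block contains a repeated even factor is killed outright by the right action of $H(u)\ot H(1-N-u)\ot1$ (such a vector is fixed by the right action of the corresponding $P_{\ts pq}$ while $P_{\ts pq}\ts H=-\,H$); and images of the right action of any $Q_{\ts pq}$ inside a block are killed as well. So for each $k$ the paired summands of \eqref{fiver} become, each with the \emph{same} sign $(-1)^{\ts N+k}$, the vectors $e_{\ts1}\ot d_{\ts1}\ot d_{\ts k}\ot e_{\ts-1}\ot e_{\ts k}$ and $e_{\ts-k}\ot d_{\ts1}\ot d_{\ts k}\ot e_{\ts k}\ot e_{\ts k}$ for lines two and five, and similarly $e_{\ts k}\ot d_{\ts k}\ot d_{\ts1}\ot e_{\ts-k}\ot e_{\ts k}$ and $e_{\ts-1}\ot d_{\ts k}\ot d_{\ts1}\ot e_{\ts1}\ot e_{\ts k}$ for lines three and four. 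The two terms of a pair then \emph{add up}, modulo vectors with a repeated factor in a block, to the full contracted sum $\sum_j\,e_{\ts j}\ot d_{\ts1}\ot d_{\ts k}\ot e_{\ts-j}\ot e_{\ts k}\,(-1)^{\ts N+k}$, which lies in the image of the right action of $Q_{\,1,\ts2\ts N}$ by \eqref{qqimage}; that is how each pair dies under $\sim\,$. The equal coefficients on the paired lines, which you correctly noticed in contrast with \eqref{fivel}, simply factor out; no $a(u)$, $b\ts(u)$ ever enter. As written, your argument for $i=-1$ is a plan whose essential computation is missing, and the plan points at machinery that would not carry it out.
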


\begin{proof}
Proposition \ref{P9} follows from its particular cases of $i=1\,,-\ts1$ 
by the same remark as already used in the beginning of the proof of 
Proposition \ref{P8}.
For $i=1$ our proposition follows from
Corollary \ref{C8}, see the equality \eqref{cuv} above. 
For $i=-\ts1$ we will derive Proposition \ref{P9}
from Corollary \ref{C9}. We will prove that
the summands with the index $k$ in the second and
fifth lines of \eqref{fiver} cancel each other by $\sim$
and that so do the summands with the index $k$ in the third and
fourth lines of \eqref{fiver}. 

Consider the second and the fifth lines of \eqref{fiver}. 
For $k=1\lc N$ by Lemma~\ref{L1} 
\begin{gather*}
e_{\ts 1}\ot\ldots\ot e_{\ts N}\ot
e_{\ts1}\ot\ldots\ot e_{\ts k-1}
\ot e_{\ts-1}\ot 
e_{\ts k+1}\ot\ldots\ot e_{\ts N}\ot e_{\ts k}\ts\,+
\\[8pt]
e_{\ts-k}\ot e_{\ts2}\ot\ldots\ot e_{\ts N}\ot 
e_{\ts 1}\ot\ldots\ot e_{\ts N}\ot e_{\ts k}\,\sim
\\[6pt]
e_{\ts 1}\ot d_{\ts1}\ot d_{\ts k}\ot e_{\ts-k}\ot e_{\ts k}
\,(-1)^{\ts N+k}+
e_{\ts-k}\ot d_{\ts1}\ot d_{\ts k}\ot e_{\ts k}\ot e_{\ts k}\,
(-1)^{\ts N+k}\ts\,\sim
\\[4pt]
\sum_j\,
e_{\ts j}\ot d_{\ts1}\ot d_{\ts k}\ot e_{\ts-j}\ot e_{\ts k}
\,(-1)^{\ts N+k}\,.
\end{gather*}
So the last displayed line lies in the image of 
the right action of $Q_{\,1,\ts2\ts N}\,$.
Here we use the notation introduced just before stating
Corollary \ref{C7}.

Now consider the third and fourth lines of \eqref{fivel}. 
For $k=1\lc N$ by Lemma~\ref{L1} 
\begin{gather*}
e_{\ts 1}\ot\ldots\ot e_{\ts N}\ot
e_{\ts-k}\ot e_{\ts2}\ot\ldots\ot e_{\ts N}\ot e_{\ts k}\ts\,+
\\[8pt]
e_{\ts1}\ot\ldots\ot e_{\ts k-1}
\ot e_{\ts-1}\ot 
e_{\ts k+1}\ot\ldots\ot e_{\ts N}\ot 
e_{\ts 1}\ot\ldots\ot e_{\ts N}\ot e_{\ts k}\ts\,\sim
\\[6pt]
e_{\ts k}\ot d_{\ts k}\ot d_{\ts 1}\ot e_{\ts-k}\ot e_{\ts k}
\,(-1)^{\ts N+k}+
e_{\ts-1}\ot d_{\ts k}\ot d_{\ts 1}\ot e_{\ts 1}\ot e_{\ts k}\,
(-1)^{\ts N+k}\ts\,\sim
\\[4pt]
\sum_j\,
e_{\ts j}\ot d_{\ts k}\ot d_{\ts 1}\ot e_{\ts-j}\ot e_{\ts k}
\,(-1)^{\ts N+k}\,.
\end{gather*}
So the last displayed line also lies in the image of 
the right action of $Q_{\,1,\ts2\ts N}\,$.
\qed
\end{proof}

The vector space $(\CN)^{\ts\ot\ts2}$ has a
basis of the vectors $e_{\ts i}\ot e_{\ts j}\,$.
Applying~any $X\in(\End\CN)^{\ts\ot\ts2}$
to a basis vector from the left and taking the coefficient 
of the result at another 
basis vector is not always the same as applying $X$ to the latter vector from
the right and taking the coefficient of the result at 
the former vector. But these two coefficients will be
the same for $X=P$ and for $X=Q\,$. 

Indeed, both actions of $P$
map $e_i\ot e_{j}\mapsto e_j\ot e_i\,{(-1)}^{\,\bi\ts\bj}\ts$.
Further, applying $Q$ to the vector 
$e_{\ts j}\ot e_{\,-\ts j}$ from the left
yields the sum \eqref{qimage}. Applying $Q$ to the vector
$e_{\ts i}\ot e_{\,-\ts i}$ in \eqref{qimage}
from the right yields the sum \eqref{qqimage}
multiplied by~${(-1)}^{\,\bi}$.

It follows that applying the product
$(\ts H(u)\ot H(\ts1-N-u\ts)\ot1\ts)\,C\ts(u\,,\ns v)$
to any basis vector of $(\CN)^{\ts\ot\ts(2\ts N+1)}$
from the left and taking the coefficient 
of the result at another 
basis vector is the same as applying this product
to the latter vector from
the right and taking the coefficient of the result at 
the former~vector.

Let us now consider the product 
at the left hand side of the displayed equality 
in Section \ref{22}. To complete our proof
Theorem \ref{T3} we have to show that 
for any indices $i\,,j$ applying this product  
to the basis vector $f_{\ts j}$ and taking 
the coefficient of the result at $f_{\ts i}$
gives \eqref{BT}. But this follows from Proposition \ref{P9}
due to the above remark about the left and right actions on the basis
vectors of $(\CN)^{\ts\ot\ts(2\ts N+1)}\,$.

Here we use second formula for $A(u)$ in Section \ref{15}
and the equality \eqref{cuv}.
We also use the equality displayed just before stating Lemma \ref{L1}
with $n=N\ts$. And we also use the following observation. 
Due to \eqref{QQ} the product 
$$
(\ts Q_{\ts 1,\ts2\ts N}\ot1\ts)\,
T_{\ts 1}(u+N-1)\,T_{\ts2\ts N}(\ts1-N-u\ts)
$$ by
is divisible by $Q_{\ts 1,\ts2\ts N}\ot1$ on the right.
But the coefficient at $f_{\ts j}$ of any vector from the image of 
the right action of
$
Q_{\ts 1,\ts2\ts N}\,(\ts H(u)\ot H(\ts1-N-u\ts)\ot 1\ts)
$
on $(\CN)^{\ts\ot\ts(2\ts N+1)}$ is zero.
Our proof of Theorem \ref{T3} is now complete. 


\vspace{-\baselineskip}
\refstepcounter{section}
\subsection*{\bf\thesection}
\label{25}

In this section we will provide an interpretation of 
Proposition~\ref{P8}. 
We will employ the usual left action of the algebra $\End\CN$ on
the vector space $\CN\,$. 
Proposition \ref{P9} admits a similar interpretation.
But then one has to use the right action of the algebra $\End\CN$ on
the vector space $\CN$ instead of left action. 

For any $t\in\CC$ consider 
the representation $\rho_{\,t}\ot\rho_{\,-\ts t}$ of the algebra $\XpN\,$.
Using the comultiplication \eqref{3.7} 
this representatioin is determined by mapping
\begin{gather*}
(\End\CN)\ot\XpN\ts[[u^{-1}]]\to(\End\CN)^{\ot\ts3}\ts[[u^{-1}]]:
\\[4pt]
T(u)\mapsto R_{\ts12}(u\,,t\ts)\,R_{\ts13}(u\,,-\,t\ts)\,.
\end{gather*}
See the end of our Section \ref{6} for the definition of the representation
$\rho_{\,t}$ of $\XpN\,$.

Multiplying the relation \eqref{rrr} by $v+w$ and then setting
$v=-\,w=t$ yields
$$
R_{\ts12}(u\,,t\ts)\,R_{\ts13}(u\,,-\,t\ts)\,Q_{\ts23}=
Q_{\ts23}\,R_{\ts13}(u\,,-\,t\ts)\,R_{\ts12}(u\,,t\ts)\,.
$$
The latter relation shows that that under the representation
$\rho_{\,t}\ot\rho_{\,-\ts t}$ the action of 
the algebra $\XpN$ preserves the 
subspace $\im\,Q\subset(\CN)^{\ts\ot\ts2}\,$.

Suppose that $-\,2\,t\neq 1,2\lc2\ts N-3$ if $N>1\,$.
Then  the rational functions $H(u)$ and $H(\ts1-N-u\ts)$
have no poles at $u=t\,$. For $N=1$ we get $H(u)=1$ and 
the complex number $t$ is still arbitrary. For any $N\ge1$ let
$W\in(\ts\End\CN\ts)^{\ot\ts2\ts N}$ be the value of the function
$H(u)\ot H(\ts1-N-u\ts)$ at~$u=t\,$.

Now consider the representation of $\XpN$ on the vector space 
$(\CN\ts)^{\ot\ts2\ts N}$
$$
\rho=\rho_{\,1-N-\ts t}\ot\ldots\ot\rho_{\,-\ts t}\ot
\rho_{\,t}\ot\ldots\ot \rho_{\,t\ts+N-1}\,.
$$
For this particular choice of the representation $\rho$ we have
\begin{gather*}
\id\ot\rho:
(\End\CN)\ot\XpN\ts[[v^{-1}]]\to(\End\CN)^{\ot\ts2\ts N+1}\ts[[v^{-1}]]:
\\[4pt]
T(-\,v)\mapsto
R_{\,12}(-\,v\,,1-N-\ts t\ts)\,\ldots\,R_{\,1,N+1}(-\,v\,,-\,t\ts)
\,\times
\\[4pt]
R_{\,1,N+2}(-\,v\,,t\ts)\,\ldots\,R_{\,1,2\ts N+1}(-\,v\,,t\ts+N-1\ts)
\,=
\\[4pt]
P_{\,12}\,\ldots\,P_{\,2\ts N,2\ts N+1}\,
R_{\,2\ts N+1,1}(-\,v\,,1-N-\ts t\ts)
\,\ldots\,
R_{\,2\ts N+1,N}(-\,v\,,-\,t\ts)
\,\times
\\[4pt]
R_{\,2\ts N+1,N+1}(-\,v\,,t\ts)
\,\ldots\,
R_{\,2\ts N+1,2\ts N}(-\,v\,,t\ts+N-1\ts)\,
P_{\,2\ts N,2\ts N+1}\,\ldots\,P_{\,12}\,=
\\[4pt]
P_{\,12}\,\ldots\,P_{\,2\ts N,2\ts N+1}\,
C\ts(\,t\,,\ns v\ts)\,
P_{\,2\ts N,2\ts N+1}\,\ldots\,P_{\,12}\,.
\end{gather*}
The last equality is obtained by using 
\eqref{prp}.
The equality \eqref{cuv}
now shows that the action of
the algebra $\XpN$ by $\rho$ preserves the subspace
$\ker\ts W\subset(\CN)^{\ts\ot\ts2\ts N}\,$.
This action also preserves the subspace 
$\im\ts\,Q_{\ts N,\ts N+1}\subset(\CN)^{\ts\ot\ts2\ts N}$
by the above remark. Proposition \ref{P8}
implies that this action preserves the 
subspace spanned by $\ker\ts W$, $\im\ts\,Q_{\ts N,\ts N+1}$
and the vector \eqref{onedim}.
Moreover under the action of the algebra 
$\XpN$ on the one-dimensional
subquotient of $(\CN)^{\ts\ot\ts2\ts N}$
spanned by~\eqref{onedim} 
\begin{equation*}
T_{\ts ij}(-\,v)\ts\mapsto\ts\de_{\ts ij}\,\ts
\frac{(\ts t-v-1\ts)\,(\ts t+v+N\ts)}{(\ts t-v\ts)\,(\ts t+v+N-1\ts)}\,\,.
\end{equation*}

Let us choose $c\ts(u)$ in \eqref{muf} so that $c\ts(-\,v)$
is the reciprocal of the last displayed fraction. 
Then $c\ts(u)\,T_{\ts ij}(u)$ acts on our one-dimensional
subquotient just as $\de_{\ts ij}\,$. 
For this $c\ts(u)$ the composition of 
the representation $\rho$ with the automorphism \eqref{muf} of $\XpN$ 
maps $Z(u)\mapsto1\,$, see Section~\ref{11}.
Hence the composite representation of $\XpN$ on $(\CN)^{\ts\ot\ts2\ts N}$
factors through $\YpN\,$.


\refstepcounter{section}
\subsection*{\bf\thesection}
\label{26}

There is a natural 
$\ZZ\ts$-grading on the associative algebra $\YpN\,$.
It is defined by setting the degree of the generator
$T^{\ts(r)}_{ij}$ to $0$ if the indices $i\,,j$ are of the same sign,
to $1$ if $i>0>j$ and to $-\ts1$ if $i<0<j\,$.
Therefore twice the $\ZZ\ts$-degree of $T^{\ts(r)}_{ij}$
is 
the eigenvalue of the 
element $E_{\ts ij}\in\End\CN$ under the adjoint action~of 
$$
E=E_{\ts 11}-E_{\ts-1\ts,\ts-1}+\ldots+E_{\ts NN}-E_{\ts-N\ts,\ts-N}\,.
$$
The elements \eqref{p} and \eqref{q} of the algebra 
$(\End\CN)^{\ot\ts2}$ both
commute with the element $E\ot1+1\ot E\,$.
Hence the defining relations of the algebra 
$\YpN$ are homogeneous, see \eqref{rttttr} and \eqref{CC1} with $Z(u)=1\ts$.
So our $\ZZ\ts$-grading is well defined.
We will use this grading to prove that under the antipodal map
\eqref{S} on $\YpN$
\begin{equation}
\label{bin}
B(u)\mapsto B(u)^{-1}\,.
\end{equation}

Let $\mathrm{I}$ be the subspace of $\YpN$
consisting of all elements of $\ZZ\ts$-degree $0\,$. 
This subspace is a subalgebra. Using the relations \eqref{rttttr}
each element of $\YpN$ can be written as a linear 
combination of monomials in the generators $T^{\ts(r)}_{ij}$ 
where all factors of degree $1$ are on the left and
all factors of degree $-\ts1$ are on the right.
If such a monomial belongs to $\mathrm{I}\ts\setminus\ns\{0\}$
and has a factor of degree $1\ts$,
it also has a factor of degree $-\ts1\ts$.
So the monomials from $\mathrm{I}\ts\setminus\ns\{0\}$
span a two-sided ideal of~$\mathrm{I}\,$.

Denote this ideal by $\mathrm{J}\,$. 
Take any monomial in the generators $T^{\ts(r)}_{ij}$ of degree~$0\ts$.
By using the relations \eqref{rttttr} and \eqref{CC1} with $Z(u)=1\ts$,
this monomial can be written as a linear 
combination of monomials in the generators $T^{\ts(r)}_{ij}$ 
with $i\,,j>0$ only and of some monomials from $\mathrm{J}\,$. 
So the quotient algebra $\mathrm{I}\,/\,\mathrm{J}$
can be identified with the subalgebra of $\YpN$
in Corollary \ref{C3}\ts. 
Let $\mathrm{A}$ be the latter subalgebra 
and $\al:\mathrm{I}\to\mathrm{A}$ be the canonical homomorphism
coming from this identification. 

The homomorphism $\al$ maps the series $B(u)$ to itself, 
because all coefficients of this series belong to $\mathrm{A}\,$.
Thus $\al$ maps the series $B(u)^{-1}$ to itself too.
Consider the image of the series $B(u)$ by the 
antipodal map \eqref{S} on $\YpN\,$. This image is a formal power 
series in $u^{-1}$ with coefficients in the centre of $\YpN\,$.
We will show that by applying $\al$ to this image we
get the series $B(u)^{-1}$ as well.
Due to Corollary \ref{C4} this fact will imply the correspondence
\eqref{bin} under the map \eqref{S}.

Let $G\ts(u)$ the $2\ts N\times 2\ts N$ matrix
whose $ij$ entry is the series 
$$
G_{\ts ij}(u)=
T_{ij}(u)\,{(-1)}^{\,\bi\ts\bj\ts+\,\bj}\,.
$$
We index the rows and columns of $G\ts(u)$ by
the numbers $1\lc N,-\ts1\ts,\ldots,-\ts N\ts$.
The matrix $G\ts(u)$ is invertible.
Let $\GP_{\ts ij}(u)$ be the $ij$ entry of the inverse matrix.
Then by the definition \eqref{Tui} for any indices $i$ and $j$
we have the equality
$$
\GP_{\ts ij}(u)=\TP_{ij}(u)\,{(-1)}^{\,\bi\ts\bj\ts+\,\bj}\,.
$$

The matrix $G\ts(u)$ splits to four blocks of size $N\times N$
corresponding to different signs of the indices $i$ and $j\,$. By using for instance \cite[Lemma 3.2]{BF}
we obtain that for $i\,,j>0$ the homorphism $\al$ maps
$\GP_{\ts ij}(u)$ to the $ij$ entry of the 
inverse of the block of $G\ts(u)$ corresponding to the positive
matrix indices. But for $i\,,j>0$ we have 
$G_{\ts ij}(u)=T_{ij}(u)$ and $\GP_{\ts ij}(u)=\TP_{ij}(u)\,$.
Therefore by the definition \eqref{Au} the homomorphism $\al$ maps
the image of $B(u)$ by \eqref{S} to $B(u)^{-1}$ as needed.
Here we use \cite[Proposition 2.19]{MNO} and our
Corollary \ref{C3}, also see the end of our Section~\ref{9}.


\newpage

\section*{Acknowledgements}

I am grateful to Jonathan Brundan for his suggestion to employ
\cite{B} in the proof of Theorem \ref{T2}. 
I am also grateful to Inna Entova\ts-Aizenbud and Vera Serganova for 
helpful advice on the representation theory of 
periplectic Lie superalgebras.





\end{document}